\newtheorem{thm}{Theorem}[section]
\newtheorem{cor}[thm]{Corollary}
\newtheorem{lem}[thm]{Lemma}
\newtheorem{prop}[thm]{Proposition}
\theoremstyle{definition}
\newtheorem{defin}[thm]{Definition}
\numberwithin{equation}{section}
\newcommand{\bR}{\mathbb{R}}
\newcommand{\supp}{\operatorname{supp}}
\newcommand{\dist}{\operatorname{dist}}
\newcommand{\dif}{\,\mathrm{d}}
\newcommand{\charfun}{\ensuremath{\mathbbm 1}}
\DeclareMathOperator{\card}{card}
\begin{document}
\title{Unconditionality of orthogonal spline systems in $H^1$}
\author[G. Gevorkyan]{Gegham Gevorkyan}
\address{Yerevan State University, Alex Manoukian 1, Yerevan, Armenia}
\email{ggg@ysu.am}

\author[A. Kamont]{Anna Kamont}
\address{Institute of Mathematics of the Polish Academy of Sciences, ul. Wita Stwosza 57, 80-952 Gda\'nsk, Poland}
\email{A.Kamont@impan.gda.pl}

\author[K. Keryan]{Karen Keryan}
\address{Yerevan State University, Alex Manoukian 1, Yerevan, Armenia}
\email{karenkeryan@ysu.am}

\author[M. Passenbrunner]{Markus Passenbrunner}
\address{Institute of Analysis, Johannes Kepler University Linz, Austria, 4040 Linz, Altenberger Strasse 69}
\email{markus.passenbrunner@jku.at}

\date{\today}
\begin{abstract}
We give a simple geometric characterization of knot sequences for which the corresponding orthonormal spline system of arbitrary order $k$ is an unconditional basis in the atomic Hardy space $H^1[0,1]$.
\end{abstract}
\maketitle

\section{Introduction.}\label{intro}

This paper belongs to a series of papers studying properties of orthonormal spline systems with arbitrary knots.
The detailed study of such systems started in 1960's with  Z. Ciesielski's papers \cite{Ciesielski1963,Ciesielski1966} on properties of
the Franklin system, which is an orthonormal system consisting of continuous piecewise linear functions with dyadic knots.
Next, the results by J. Domsta (1972), cf. \cite{Domsta1972}, made it possible to extend such study to orthonormal spline systems of higher order
-- and higher smoothness -- with dyadic knots. These systems occurred to be bases or unconditional bases in several function spaces like
$L^p[0,1]$, $1 \leq p < \infty$, $C[0,1]$, $H^p[0,1]$, $0 < p \leq 1$, Sobolev spaces $W^{p,k}[0,1]$, they give characterizations of BMO and VMO spaces, and various  spaces of smooth functions (H\"older functions, Zygmund class, Besov spaces). One should mention here names such as Z. Ciesielski, J. Domsta, S.V. Bochkarev, P. Wojtaszczyk, S.-Y. A. Chang, P. Sj\"olin, J.-O. Str\"omberg (for more detailed references see e.g.
 \cite{GevorkyanKamont1998}, \cite{GevorkyanKamont2005}, \cite{GevorkyanKamont2008}). Nowadays,  results of this kind are known  for wavelets.

The extension of these results
to orthonormal spline systems with arbitrary knots has begun with the case of piecewise linear systems, i.e. general Franklin systems,
 or orthonormal spline systems of order $2$. This was possible due to precise estimates of the inverse to the Gram matrix
 of piecewise linear $B$-spline bases with arbitrary knots, as presented in \cite{KashinSaakyan1989}. First results in this direction were obtained
in \cite{CiesielskiKamont1997} and \cite{GevorkyanKamont1998}.
We would like to mention here two results by G.G. Gevorkyan and A. Kamont.  First, each general Franklin system  is an unconditional
basis in $L^p[0,1]$ for $1 < p < \infty$, cf. \cite{GevKam2004}. Second, there is a simple geometric characterization of knot sequences
for which the corresponding general Franklin system is a basis or an unconditional basis in $H^1[0,1]$, cf. \cite{GevorkyanKamont2005}. We note that in both of these results, an essential tool for their proof is the association of a so called characteristic interval to each general Franklin function $f_n$.

The case of splines of higher order is much more difficult. Let us mention that the basic result -- the existence of a uniform bound
for $L^\infty$-norms of orthogonal projections on spline spaces of order $k$ with arbitrary order (i.e. a bound depending on the order $k$, but independent of
 the sequence of knots)~--~was a long-standing problem known as C. de Boor's conjecture (1973), cf. \cite{deBoor1973}.
 The case of $k=2$ was settled even earlier by Z. Ciesielski \cite{Ciesielski1963}, the cases $k=3,4$ were solved by C. de Boor himself (1968, 1981), cf. \cite{deBoor1968,deBoor1981}, but the
 positive answer in the general case was  given by A. Yu. Shadrin \cite{Shadrin2001} in 2001. A much simplified and shorter proof of this theorem was recently obtained by M.~v.\,Golitschek (2014), cf. \cite{Golitschek2014}.
An immediate consequence of A.Yu. Shadrin's result is that if a sequence of knots is dense in $[0,1]$, then the corresponding
orthonormal spline system of order $k$ is a basis in $L^p[0,1]$, $1 \leq p < \infty$ and $C[0,1]$.
Moreover, Z. Ciesielski \cite{Ciesielski2000} obtained several consequences of Shadrin's result, one of them being some estimate for the inverse to the $B$-spline
Gram matrix. Using this estimate, G.G. Gevorkyan and A. Kamont \cite{GevorkyanKamont2008}  extended  a part of their result from \cite{GevorkyanKamont2005}
to orthonormal spline systems of arbitrary order and obtained a characterization of knot sequences for which the corresponding orthonormal
 spline system of order $k$ is a basis in $H^1[0,1]$. Further extension required more precise estimates for the inverse of $B$-spline Gram matrices,
 of the type known for the piecewise linear case. Such estimates were obtained recently by M. Passenbrunner and A.Yu. Shadrin \cite{PassenbrunnerShadrin2014}.
 Using these estimates, M. Passenbrunner \cite{Passenbrunner2013} proved that for each sequence of knots, the corresponding orthonormal spline system
 of order $k$ is an unconditional basis in $L^p[0,1]$, $1 < p < \infty$.
 The main result of the present paper  is to give a characterization of those knot sequences for which
 the corresponding orthonormal spline system
 of order $k$ is an unconditional basis in $H^1[0,1]$.

\medskip

The paper is organized as follows.
In Section \ref{definitions} we give necessary definitions and we formulate the main result of this paper -- Theorem \ref{thm:uncond}.
In Sections \ref{sec:prel} and \ref{sec:proporth} we recall or prove several facts needed for our results.
In particular, in Section \ref{sec:proporth} we recall precise pointwise estimates for orthonormal
spline systems with arbitrary knots, the associated characteristic intervals and some combinatoric facts for characteristic intervals.
Then Section \ref{four.cond} contains some auxiliary results, and the proof of Theorem \ref{thm:uncond} is done in Section \ref{main.proof}.

\medskip
The results contained in this paper were obtained independently by the two teams G.\,Gevorkyan, K.\,Keryan and A.\,Kamont, M.\,Passenbrunner at the same time, so we have decided to work out a joint paper.
\section{Definitions and the main result.}\label{definitions}

Let $k \geq 2$ be an integer.
In this work, we are concerned with orthonormal spline systems of  order $k$ with arbitrary partitions. We let $\mathcal T =(t_n)_{n=2}^\infty$ be a dense sequence of points in the open unit interval such that each point occurs at most $k$ times. Moreover, define $t_0:=0$ and $t_1:=1$. Such point sequences are called $k$-\emph{admissible}.
For $n$ in the range $-k+2\leq n\leq 1$, let $\mathcal S_n^{(k)}$ be the space of polynomials of order $n+k-1$ (or degree $n+k-2$) on the interval $[0,1]$ and $(f_{n}^{(k)})_{n=-k+2}^{1}$ be the collection of orthonormal polynomials in $L^2\equiv L^2[0,1]$ such that the degree of $f_n^{(k)}$ is $n+k-2$.
For $n\geq 2$, let $\mathcal T_n$ be the ordered sequence of points consisting of the grid points $(t_j)_{j=0}^n$ counting multiplicities and where the knots $0$ and $1$ have multiplicity $k$, i.e., $\mathcal T_n$ is of the form
\begin{align*}
\mathcal T_n=(0=\tau_{n,1}&=\dots=\tau_{n,k}<\tau_{n,k+1}\leq&\\
&\leq\dots\leq\tau_{n,n+k-1}<\tau_{n,n+k}=\dots=\tau_{n,n+2k-1}=1).
\end{align*}
In that case, we also define $\mathcal S_n^{(k)}$ to be the space of polynomial splines of order $k$ with grid points $\mathcal T_n$. For each $n\geq 2$, the space $\mathcal S_{n-1}^{(k)}$ has codimension $1$ in $\mathcal S_{n}^{(k)}$ and, therefore, there exists a function $f_{n}^{(k)}\in \mathcal S_{n}^{(k)}$ that is orthonormal to the space $\mathcal S_{n-1}^{(k)}$. Observe that this function $f_{n}^{(k)}$ is unique up to sign.  
\begin{defin}
The system of functions $(f_{n}^{(k)})_{n=-k+2}^\infty$ is called
\emph{orthonormal spline system of order $k$ corresponding to the sequence
  $(t_n)_{n=0}^\infty$}. 
\end{defin}
  We will frequently omit the parameter $k$ and write $f_n$ and $\mathcal S_n$ instead of $f_{n}^{(k)}$ and $\mathcal S_{n}^{(k)}$, respectively.

Let us note that the case $k=2$ corresponds to orthonormal systems of piecewise linear functions, i.e. general Franklin systems.

We are interested in characterizing sequences of knots $\mathcal T$ such that the system $(f_{n}^{(k)})_{n=-k+2}^\infty$
is an unconditional basis in $H^1=H^1[0,1]$.
By $H^1=H^1[0,1]$ we mean the atomic Hardy space on $[0,1]$, cf \cite{CoifmanWeiss1977}.
 A function $a:[0,1]\to\bR$ is called an \emph{atom}, if either $a\equiv 1$ or there exists an interval $\Gamma$ such that the following conditions are satisfied:
\begin{enumerate}[(i)]
\item $\supp a\subset \Gamma$,
\item $\|a\|_\infty\leq |\Gamma|^{-1}$,
\item $\int_0^1 a(x)\dif x=\int_{\Gamma}a(x)\dif x=0$.
\end{enumerate}
Then, by definition, $H^1$ consists of all functions $f$ that have the representation
\[
f=\sum_{n=1}^\infty c_n a_n
\]
for some atoms $(a_n)_{n=1}^\infty$ and real scalars $(c_n)_{n=1}^\infty$ such that $\sum_{n=1}^\infty |c_n|<\infty$. The space $H^1$ becomes a Banach space under the norm
\[
\|f\|_{H^1}:=\inf \sum_{n=1}^\infty |c_n|,
\]
where $\inf$ is taken over all atomic representations $\sum c_n a_n$ of $f$.

To formulate our result, we need to introduce some regularity conditions for a sequence $\mathcal T$.

For $n\geq 2$, $\ell\leq k$ and $i$ in the range $k-\ell+1\leq i\leq n+k-1$, we define $D_{n,i}^{(\ell)}$ to be the interval $[\tau_{n,i},\tau_{n,i+\ell}]$.
\begin{defin}
Let $\ell\leq k$ and $(t_n)_{n=0}^\infty$ be an $\ell$-admissible (and therefore $k$-admissible) point sequence. Then, this sequence is called \emph{$\ell$-regular with parameter $\gamma\geq 1$} if
\[
\frac{|D_{n,i}^{(\ell)}|}{\gamma}\leq |D_{n,i+1}^{(\ell)}|\leq \gamma|D_{n,i}^{(\ell)}|,\qquad n\geq 2,\  k-\ell+1\leq i \leq n+k-2.
\]
\end{defin}

So, in other words, $(t_n)$ is $\ell$-regular, if there is a uniform finite
bound $\gamma\geq 1$, such that for all $n$, the ratios of the lengths of
neighboring supports of B-spline functions (cf. Section \ref{sec:bsplines}) of order $\ell$ in the grid $\mathcal T_n$ are bounded 
by $\gamma$.

The following characterization for $(f_n^{(k)})$ to be a basis in $H^1$ is the main result of \cite{GevorkyanKamont2008}:
\begin{thm}[\cite{GevorkyanKamont2008}]\label{thm:basis}
Let $k\geq 1$ and let $(t_n)$ be a $k$-admissible sequence of knots in $[0,1]$
with the corresponding orthonormal spline system $(f_n^{(k)})$ of order $k$.
Then, $(f_n^{(k)})$ is a basis in $H^1$ if and only if $(t_n)$ is $k$-regular
with some parameter $\gamma\geq 1$
\end{thm}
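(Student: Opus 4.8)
We outline a proof of Theorem~\ref{thm:basis}. Since $(f_n^{(k)})$ is a complete orthonormal system in $L^2$, it is a basis in $H^1$ if and only if $\bigcup_N\cS_N^{(k)}$ is dense in $H^1$ and the orthogonal projections $P_Nf=\sum_{n=-k+2}^{N}\ind{f,f_n^{(k)}}f_n^{(k)}$ satisfy $\sup_N\|P_N\|_{H^1\to H^1}<\infty$. Density holds for every $k$-admissible $\cT$ and is easy: mean-zero Lipschitz functions are dense in $H^1$, such a $g$ is approximated uniformly by mean-zero splines $s_N\in\cS_N^{(k)}$ because $\cT$ is dense, and a mean-zero $h$ on $[0,1]$ with $\|h\|_\infty\le\varepsilon$ equals $\varepsilon$ times an $H^1$-atom, so $\|g-s_N\|_{H^1}\to0$. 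Hence the whole content is the uniform boundedness of $P_N$, which the theorem claims holds exactly when $\cT$ is $k$-regular. Note also that for the finite-rank operators $P_N$ the reduction to atoms is clean: if $f=\sum_j c_j a_j$ in $H^1$ then, for each fixed $N$, $P_Nf=\sum_j c_j P_Na_j$ (a finite sum over $n$ inside), so it suffices to bound $\|P_Na\|_{H^1}$ uniformly over $N$ and over atoms $a$.

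\emph{Sufficiency.} Assume $\cT$ is $k$-regular with parameter $\gamma$, and fix an atom $a$ with $\supp a\subseteq\Gamma$, $\|a\|_\infty\le|\Gamma|^{-1}$, $\int a=0$. First, $P_Na$ has vanishing mean: the constant $f_{-k+2}^{(k)}$ is annihilated by $\int a=0$, and all remaining $f_n^{(k)}$ integrate to $0$. To organize $\sum_{n\le N}\ind{a,f_n^{(k)}}f_n^{(k)}$ I attach to each $f_n^{(k)}$ its characteristic interval $J_n$ (Section~\ref{sec:proporth}); $k$-regularity gives the exponential off-diagonal estimate $|f_n^{(k)}(x)|\le C|J_n|^{-1/2}q^{d_n(x,J_n)}$ with $q=q(k,\gamma)\in(0,1)$, where $d_n$ counts $\cT_n$-grid intervals between $x$ and $J_n$, together with the analogous bound for $(f_n^{(k)})'$, and it makes $\{J_n:n\le N\}$ quasi-dyadic (bounded overlap within a generation, geometric ratios across generations). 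Then split the sum by the position of $J_n$ relative to a $\cT_N$-adapted enlargement of $\Gamma$: if $|J_n|\gtrsim|\Gamma|$ and $J_n$ is near $\Gamma$, use $\int a=0$ to get $|\ind{a,f_n^{(k)}}|\le\|a\|_1\operatorname{osc}_\Gamma f_n^{(k)}$; if $|J_n|\lesssim|\Gamma|$, use $|\ind{a,f_n^{(k)}}|\le\|a\|_\infty\|f_n^{(k)}\|_1$; if $J_n$ is far from $\Gamma$, the coefficient is exponentially small. Summing the $\ell^2$-norms of these coefficients over each generation and each dyadic spatial annulus around $\Gamma$, the geometric decay yields for each scale $2^i|\Gamma|$ a function supported essentially in the $2^i$-fold dilate of $\Gamma$ with $L^2$-norm $\le C2^{-\varepsilon i}(2^i|\Gamma|)^{-1/2}$; subtracting its mean (the corrections telescope since $\int P_Na=0$) makes it a multiple $\le C2^{-\varepsilon i}$ of an atom, and $\sum_i C2^{-\varepsilon i}\le C(k,\gamma)$.

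\emph{Necessity.} Let $B=\sup_N\|P_N\|_{H^1\to H^1}<\infty$. Then $\|P_N-P_{N-1}\|_{H^1\to H^1}\le2B$, and $P_N-P_{N-1}$ is the rank-one map $g\mapsto\ind{g,f_N^{(k)}}f_N^{(k)}$, whose $H^1\to H^1$ norm is $\|f_N^{(k)}\|_{H^1}\,\|f_N^{(k)}\|_{\operatorname{BMO}}$ (using $(H^1)^*=\operatorname{BMO}$); hence $\|f_N^{(k)}\|_{H^1}\,\|f_N^{(k)}\|_{\operatorname{BMO}}\le2B$ for all $N$. If $\cT$ is not $k$-regular, there are $n,i$ with $|D_{n,i+1}^{(k)}|/|D_{n,i}^{(k)}|$ (or its reciprocal) arbitrarily large; choosing $N$ so that $J_N$ is the shorter of two such adjacent grid intervals, the pointwise behaviour of $f_N^{(k)}$ gives $\|f_N^{(k)}\|_{\operatorname{BMO}}\gtrsim|J_N|^{-1/2}$ and $\|f_N^{(k)}\|_{H^1}\gtrsim|J_N|^{1/2}\log\bigl(|D_{n,i+1}^{(k)}|/|D_{n,i}^{(k)}|\bigr)$ — the logarithm arising from integrating the $|x|^{-1}$-type profile of $f_N^{(k)}$ across the long neighbour, exactly as in the Franklin case of \cite{GevorkyanKamont2005}. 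The product then exceeds $2B$, a contradiction; so a basis forces $k$-regularity.

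\emph{Main obstacle.} The delicate part is the sufficiency, and within it the final bookkeeping step: merging the exponential decay of $f_n^{(k)}$ in the grid-distance $d_n$ with the quasi-dyadic structure of the characteristic intervals so that the triple sum over generations, spatial annuli and individual indices $n$ assembles into a genuine atomic decomposition with summable coefficients, while keeping the mean-zero corrections under control. A persistent technical nuisance is that a fixed small interval $\Gamma$ need not be resolved by the grid $\cT_N$, so every ``near $\Gamma$'' estimate must be carried out with a $\cT_N$-adapted substitute for $\Gamma$; making these two scales interact cleanly is where most of the effort lies, and it is precisely here that $k$-regularity, rather than mere density of $\cT$, is indispensable.
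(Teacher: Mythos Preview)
This theorem is not proved in the present paper: it is quoted from \cite{GevorkyanKamont2008} (where it is the main result) and used here only as a black box in the proof of Theorem~\ref{thm:uncond}. There is therefore no proof in this paper to compare your proposal against.

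On your sketch itself, two comments. For sufficiency, the outline is along standard lines and broadly reasonable, but one inaccuracy should be fixed: the exponential decay in Lemma~\ref{lem:lporthspline} holds for \emph{every} $k$-admissible sequence with $q=q(k)$ independent of any regularity parameter. What $k$-regularity actually buys you is the conversion of decay in the grid-count $d_n(x)$ into decay in Euclidean distance (neighbouring B-spline supports have comparable length, so $d_n(x)$ grid intervals cannot be arbitrarily short), and this is what allows the pieces you produce to be genuine $H^1$-atoms supported on dilates of $\Gamma$.

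For necessity, your rank-one duality idea is neat and the logarithm you want does appear, but not for the reason you state. The function $f_N^{(k)}$ is a piecewise polynomial of order $k$; on a single long grid interval adjacent to $J_N$ it is one polynomial, not an $|x|^{-1}$ profile, and indeed Lemma~\ref{lem:orthsplineJinterval} gives $\|f_N\|_1\sim_k|J_N|^{1/2}$ for \emph{every} admissible sequence, so the $L^1$ norm never carries a logarithm. The $\log$ enters only through the Hardy--Littlewood maximal function of $f_N$, which does pick up a $|x|^{-1}$ tail from the bump of height $\sim|J_N|^{-1/2}$ concentrated on $J_N$; that is what makes $\|f_N\|_{H^1}$ large. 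You also still owe the step from ``$(t_n)$ is not $k$-regular'' (a statement about ratios of B-spline supports $D_{n,i}^{(k)}$) to ``there is an index $N$ whose characteristic interval $J_N$ sits next to a grid interval of length $\gg|J_N|$'': $J_N$ is dictated by Definition~\ref{def:characteristic} and is not at your disposal, so this requires an argument.
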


In this paper, we prove the characterization for $(f_n^{(k)})$ to be an unconditional basis in $H^1$.
The main result of our paper is the following:

\begin{thm}\label{thm:uncond}
Let $(t_n)$ be a $k$-admissible sequence of points. Then, the corresponding
orthonormal spline system $(f_n^{(k)})$ is an unconditional basis in $H^1$ if
and only if $(t_n)$ satisfies the $(k-1)$-regularity condition with some
parameter $\gamma\geq 1$.
\end{thm}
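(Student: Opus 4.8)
The proof naturally splits into two implications, and the $(k-1)$-regularity condition will enter through the supports of the functions $f_n^{(k)}$.

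\medskip

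\textbf{Necessity.} The plan is to argue by contraposition: assuming $(t_n)$ is \emph{not} $(k-1)$-regular with any parameter, I will construct, for each $N$, a finite sequence of signs $\varepsilon_n\in\{-1,+1\}$ and coefficients such that $\|\sum \varepsilon_n c_n f_n\|_{H^1}$ is much larger than $\|\sum c_n f_n\|_{H^1}$. Concretely, failure of $(k-1)$-regularity produces, for every $\gamma$, an index $n$ and a position $i$ where two neighboring B-spline supports of order $k-1$ in $\mathcal T_n$ have wildly different lengths. Near such a grid point the orthonormal spline function $f_n^{(k)}$ (via the pointwise estimates and the characteristic interval recalled in Section \ref{sec:proporth}) behaves badly: a single $f_n$, or a short block of them, will have $H^1$ norm controlled from above, but an appropriate sign change turns it into a function whose $H^1$ norm blows up, because the cancellation that made the atomic decomposition cheap is destroyed. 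I expect this to parallel the corresponding necessity argument in \cite{GevorkyanKamont2005} for the Franklin case ($k=2$, where $(k-1)$-regularity is $1$-regularity, i.e. the trivial mesh-ratio condition is \emph{not} needed and indeed every general Franklin system is already an $L^p$-unconditional basis) — so the real content is understanding why order $k-1$ rather than $k$ is the right threshold, which comes from the smoothness/vanishing-moment interplay: an $H^1$ atom needs one vanishing moment, and a spline of order $k$ has $k-2$ continuous derivatives, leaving $k-1$ as the effective parameter governing the geometry of its "essential support."

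\medskip

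\textbf{Sufficiency.} Assume $(t_n)$ is $(k-1)$-regular with parameter $\gamma$. Since $(k-1)$-regularity implies $k$-regularity (a mesh-ratio bound on order-$\ell$ B-spline supports implies one for order $\ell+1$, up to changing $\gamma$), Theorem \ref{thm:basis} already gives that $(f_n^{(k)})$ is a basis in $H^1$; it remains to prove \emph{unconditionality}. The standard route is to show that for every choice of signs $\theta=(\theta_n)$, the multiplier operator $M_\theta\big(\sum c_n f_n\big)=\sum \theta_n c_n f_n$ is bounded on $H^1$ with a bound independent of $\theta$. By the atomic characterization of $H^1$ it suffices to prove $\|M_\theta a\|_{H^1}\le C$ for every atom $a$, with $C=C(k,\gamma)$. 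For this I would use the auxiliary machinery of Section \ref{four.cond}: estimate $\langle a,f_n\rangle$ using the vanishing moment and support of $a$ together with the pointwise/exponential-decay estimates for $f_n$ and its characteristic interval $J_n$, and then decompose $M_\theta a=\sum_n \theta_n\langle a,f_n\rangle f_n$ into a sum of (multiples of) $H^1$-atoms by grouping the indices $n$ according to the location and scale of $J_n$ relative to the supporting interval $\Gamma$ of $a$. The $(k-1)$-regularity is exactly what makes the geometric bookkeeping of the characteristic intervals $J_n$ work: it controls how the $|J_n|$ shrink along the tree of partitions, so that the resulting series of atom-coefficients is summable with a bound depending only on $k$ and $\gamma$.

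\medskip

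\textbf{Main obstacle.} The hard part is the sufficiency direction, and within it the quantitative control of $\sum_n |\langle a,f_n\rangle|\,\|(\text{atom built from }f_n)\|$: one must combine the sharp inverse-Gram-matrix estimates (through the pointwise bounds on $f_n$ from \cite{PassenbrunnerShadrin2014,Passenbrunner2013} recalled in Section \ref{sec:proporth}) with a Calderón–Zygmund-type stopping-time decomposition organized by characteristic intervals, and verify that the $(k-1)$-regularity — and \emph{not} the weaker $k$-regularity that suffices for the basis property — is precisely what closes the estimate. Establishing that the exponent $k-1$ is sharp on both sides (i.e. that $k$-regular-but-not-$(k-1)$-regular meshes really do destroy unconditionality) is the conceptual crux; the rest is careful but routine estimation using the tools assembled in Sections \ref{sec:prel}–\ref{four.cond}.
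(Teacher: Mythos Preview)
Your sufficiency sketch heads in a workable direction but not the paper's. You propose to bound $\|M_\theta a\|_{H^1}$ directly for every atom $a$ by decomposing $M_\theta a$ into atoms grouped by characteristic intervals. The paper instead factors through the sign-invariant square function $P$ and the maximal function $S$: for $f=\sum a_nf_n\in H^1$ and any signs $\varepsilon$,
\[
\Big\|\sum \varepsilon_n a_nf_n\Big\|_{H^1}\ \lesssim_{k,\gamma}\ \Big\|S\Big(\sum\varepsilon_na_nf_n\Big)\Big\|_1\ \lesssim_k\ \Big\|P\Big(\sum\varepsilon_na_nf_n\Big)\Big\|_1\ =\ \|Pf\|_1\ \lesssim_{k,\gamma}\ \|f\|_{H^1},
\]
using Propositions \ref{prop:C->D}, \ref{prop:A->B_A->C} and \ref{prop:D->A}. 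The point is that the signs disappear at the $P$-step, so one never has to build an atomic decomposition of the sign-twisted series by hand; only $\|P\phi\|_1\lesssim_{k,\gamma}1$ for atoms $\phi$ (Proposition~\ref{prop:D->A}) and the atomic decomposition driven by $\|Sf\|_1$ (Proposition~\ref{prop:C->D}) are needed. Your plan would essentially have to reprove both of these inside a single estimate.

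Your necessity argument has a real gap. The idea ``take a short block, flip signs, the $H^1$ norm blows up'' is not what happens, and your parenthetical about $k=2$ is wrong: for general Franklin systems $1$-regularity is \emph{not} automatically satisfied --- it is exactly the mesh-ratio condition on neighboring grid intervals, and \cite{GevorkyanKamont2005} shows it is the sharp hypothesis for unconditionality in $H^1$. The paper's argument is quite different from what you outline. One first disposes of sequences that are not $k$-regular via Theorem~\ref{thm:basis}. In the remaining case ($k$-regular but not $(k-1)$-regular), if $(f_n)$ were unconditional in $H^1$ then Khinchin's inequality (Proposition~\ref{prop:B->A}) together with $\|\cdot\|_1\le\|\cdot\|_{H^1}$ would force $\|Pf\|_1\lesssim\|f\|_{H^1}$ for all $f\in H^1$. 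This is refuted by Proposition~\ref{prop:not_D->A}, whose proof (via the combinatorial Lemma~\ref{lem:comb1}) constructs, for each $\ell$, a single explicit atom $\phi$ and $\ell$ indices $n_j$ with pairwise disjoint intervals $R_j$ such that $\int_{R_j}|a_{n_j}(\phi)f_{n_j}|\gtrsim_{k,\gamma}1$, whence $\|\sup_n|a_n(\phi)f_n|\|_1\gtrsim_{k,\gamma}\ell$. The construction exploits precisely the failure of $(k-1)$-regularity to produce a configuration where a short interval $\Lambda_0$ of $k-1$ grid cells is flanked by a much longer cell $L_0$, and then tracks the successive insertions landing in $L_0$. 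You would need to supply something of this specificity; the informal ``sign change destroys cancellation'' does not locate the mechanism.
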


Let us note that in case $k=2$, i.e. for general Franklin systems, both Theorems \ref{thm:basis} and \ref{thm:uncond} were obtained by G. G. Gevorkyan and A. Kamont in \cite{GevorkyanKamont2005}. (In the terminology of the current paper, the condition of strong regularity from \cite{GevorkyanKamont2005}
is now $1$-regularity, and the condition of strong regularity for pairs from \cite{GevorkyanKamont2005}
is now $2$-regularity.)

The proof of Theorem \ref{thm:uncond} follows the same general scheme as the proof of Theorem 2.2 in \cite{GevorkyanKamont2005}.
In Section \ref{four.cond} we introduce four conditions (A) -- (D) for series with respect to orthonormal spline systems of order $k$
corresponding to a $k$-admissible sequence of points. Then we study relations between these conditions under various regularity assumptions on the underlying sequence of points. Having done this, we proceed with the proof of Theorem  \ref{thm:uncond} in Section \ref{main.proof}.

\section{Preliminaries}\label{sec:prel}
The parameter $k\geq 2$ will always be used for the order of the underlying polynomials or splines.
We use the notation $A(t)\sim B(t)$ to indicate the existence of two constants $c_1,c_2>0$, such that $c_1 B(t)\leq A(t)\leq c_2 B(t)$ for all $t$, where $t$ denotes all implicit and explicit dependencies that the expressions $A$ and $B$ might have. If the constants $c_1,c_2$ depend on an additional parameter $p$, we write this as $A(t)\sim_p B(t)$. Correspondingly, we use the symbols $\lesssim,\gtrsim,\lesssim_p,\gtrsim_p$.  For a subset $E$ of the real line, we denote by $|E|$ the Lebesgue measure of $E$ and by $\charfun_E$ the characteristic function of $E$.
If $f:\Omega\to\bR$ is a real valued function and $\lambda$ is a real parameter,
we write the level set of all points at which $f$ is greater than $\lambda$ as  
$  [f>\lambda] := \{\omega\in\Omega : f(\omega) > \lambda \}. $

\subsection{Properties of regular sequences of points}\label{regularity}

%

The following Lemma describes geometric decay of intervals in regular sequences (recall the notation $D_{n,i}^{(\ell)}=[\tau_{n,i},\tau_{n,i+\ell}]$):

\begin{lem}\label{lem:geom}
Let $(t_n)$ be a $k$-admissible sequence of points that satisfies the $\ell$-regularity condition for some $1\leq \ell\leq k$ with parameter $\gamma$ and let $D_{n_1,i_1}^{(\ell)}\supset\cdots\supset D_{n_{2\ell},i_{2\ell}}^{(\ell)}$ be a strictly decreasing sequence of sets defined above. Then,
\[
|D_{n_{2\ell},i_{2\ell}}^{(\ell)}|\leq \frac{\gamma^\ell}{1+\gamma^\ell} |D_{n_1,i_1}^{(\ell)}|.
\]
\end{lem}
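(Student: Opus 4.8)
The plan is to push the whole chain into one fine grid, run a pigeonhole argument on the $2\ell-1$ strict inclusions to force one endpoint to travel at least $\ell$ knots, and then extract the length decay from the $\ell$-regularity estimate applied to two neighbouring $B$-spline supports.

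First I would observe that a strict inclusion $D_{n,i}^{(\ell)}\supsetneq D_{n',i'}^{(\ell)}$ forces $n'>n$: since the partitions are nested, this is a routine bookkeeping of the knots (and their multiplicities) of $\mathcal T_{n'}$ that must lie in the closed interval $D_{n,i}^{(\ell)}$. Hence $n_1\le n_2\le\dots\le n_{2\ell}=:p$, so every $D_{n_j,i_j}^{(\ell)}$ has both endpoints among the knots of $\mathcal T_p$, while $D_{2\ell}=D_{p,i_{2\ell}}^{(\ell)}=[\tau_{p,i_{2\ell}},\tau_{p,i_{2\ell}+\ell}]$ is honestly the support of an order-$\ell$ $B$-spline on $\mathcal T_p$. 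Now the chain consists of $2\ell-1$ strict inclusions, and at each of them the left endpoint strictly increases or the right endpoint strictly decreases, which moves that endpoint past at least one knot of $\mathcal T_p$; summing over the $2\ell-1$ steps, either the left endpoint of $D_1$ sits at least $\ell$ knots of $\mathcal T_p$ to the left of that of $D_{2\ell}$, or the right endpoint of $D_1$ sits at least $\ell$ knots to the right.

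Assume, by symmetry, the latter. Then $\tau_{p,i_{2\ell}+2\ell}$ is a knot of $\mathcal T_p$ lying in $D_1$, so
\[
D_1\supseteq[\tau_{p,i_{2\ell}},\tau_{p,i_{2\ell}+2\ell}]=D_{p,i_{2\ell}}^{(\ell)}\cup D_{p,i_{2\ell}+\ell}^{(\ell)},
\]
the two pieces meeting only at the point $\tau_{p,i_{2\ell}+\ell}$. Since $\ell$-regularity gives $|D_{p,j+1}^{(\ell)}|\ge\gamma^{-1}|D_{p,j}^{(\ell)}|$, stepping $\ell$ indices yields $|D_{p,i_{2\ell}+\ell}^{(\ell)}|\ge\gamma^{-\ell}|D_{2\ell}|$, whence
\[
|D_1|\ge|D_{2\ell}|+|D_{p,i_{2\ell}+\ell}^{(\ell)}|\ge(1+\gamma^{-\ell})\,|D_{2\ell}|,
\]
i.e.\ $|D_{2\ell}|\le\frac{\gamma^\ell}{1+\gamma^\ell}|D_1|$; the remaining case is identical, with $D_{p,i_{2\ell}-\ell}^{(\ell)}$ placed to the left of $D_{2\ell}$. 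I expect the only delicate point to be the reduction in the second paragraph---correctly accounting for knot multiplicities, in particular the forced multiplicity $k$ of $0$ and $1$ in every $\mathcal T_n$, when passing from ``an endpoint strictly moves'' to ``its index changes by at least one''; the geometric core, namely the two-$B$-spline splitting in the displayed inclusion above, is then immediate.
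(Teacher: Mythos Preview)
Your approach is essentially the paper's: both locate, by counting knots in the grid $\mathcal T_{n_{2\ell}}$, an interval $D_{n_{2\ell},m}^{(\ell)}$ adjacent to $V_{2\ell}$ and contained in $V_1$, then compare lengths via $\ell$-regularity. The paper compresses the count into the sentence ``$V_1$ contains at least $3\ell$ grid points of $\mathcal T_{n_{2\ell}}$''; your left/right pigeonhole is just this step unpacked.

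There is, however, a genuine gap in your reduction. The claim that a strict inclusion $D_{n,i}^{(\ell)}\supsetneq D_{n',i'}^{(\ell)}$ forces $n'>n$ is false. With $k=\ell=2$, $t_2=\tfrac12$, $t_3=0.3$ one has $\mathcal T_3=(0,0,0.3,\tfrac12,1,1)$ and
\[
D_{3,3}^{(2)}=[0.3,1]\supsetneq[\tfrac12,1]=D_{2,3}^{(2)},
\]
so the \emph{larger} interval sits in the \emph{finer} grid. The culprit is the multiplicity-$k$ boundary knot $1$: it lets the closed interval $D_{3,3}^{(2)}$ carry $\ell+2$ knots of $\mathcal T_3$, defeating the ``routine bookkeeping'' you had in mind. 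Without $n_1\le\cdots\le n_{2\ell}$ you cannot conclude that every endpoint of the chain lies in $\mathcal T_{n_{2\ell}}$, and the pigeonhole on $\mathcal T_p$-indices loses its footing. To be fair, the paper's own proof is silent on exactly this point---it simply asserts the $3\ell$ count---and in every application of the lemma the chains are built with $n_j$ increasing by construction, so the issue never surfaces in practice. But for the lemma as stated, this reduction needs a more careful argument than either proof supplies.
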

\begin{proof}
We set $V_j:=D_{n_j,i_j}^{(\ell)}$ for $1\leq j\leq 2\ell$. Then, by definition, $V_1$ contains $\ell+1$ grid points from $\mathcal T_{n_1}$ and it contains at least $3\ell$ grid points of the grid $\mathcal T_{n_{2\ell}}$. As a consequence, there exists an interval $D_{n_{2\ell},m}^{(\ell)}$ for some index $m$ that satisfies
\[
\operatorname{int}(D_{n_{2\ell},m}^{(\ell)}\cap V_{2\ell})=\emptyset,\qquad D_{n_{2\ell},m}^{(\ell)}\subset V_{1},\qquad \dist(D_{n_{2\ell},m}^{(\ell)},V_{2\ell})=0.
\]
The $\ell$-regularity of $(t_n)$ now implies
\[
|V_{2\ell}|\leq \gamma^\ell |D_{n_{2\ell},m}^{(\ell)}|\leq\gamma^\ell \big(|V_1|-|V_{2\ell}|\big),
\]
i.e., $|V_{2\ell}|\leq \frac{\gamma^\ell}{1+\gamma^\ell} |V_1|$, which proves the assertion of the lemma.
\end{proof}

\subsection{Properties of {B-spline} functions} \label{sec:bsplines}We define the functions $(N_{n,i}^{(k)})_{i=1}^{n+k-1}$  to be the collection of  B-spline functions of order $k$ corresponding to the partition $\mathcal T_n$. Those functions are normalized in such a way that they form a partition of unity, i.e., $\sum_{i=1}^{n+k-1}N_{n,i}^{(k)}(x)=1$ for all $x\in[0,1]$. Associated to this basis, there exists a biorthogonal basis of $\mathcal S_n$, which is denoted by $(N_{n,i}^{(k)*})_{i=1}^{n+k-1}$. If the setting of the parameters $k$ and $n$ is clear from the context, we also denote those functions by $(N_{i})_{i=1}^{n+k-1}$ and $(N_{i}^*)_{i=1}^{n+k-1}$, respectively.

We will need the following well known formula for the derivative of a linear combination of B-spline functions: if $g=\sum_{j=1}^{n+k-1} a_j N_{n,j}^{(k)}$, then
\begin{equation}\label{eq:splinederivative}
g'=(k-1)\sum_{j=2}^{n+k-1} (a_j-a_{j-1})\frac{N_{n,j}^{(k-1)}}{|D_{n,j}^{(k-1)}|}.
\end{equation}

We now recall an elementary property of polynomials.
\begin{prop}\label{prop:poly}
Let $0<\rho<1$. Let $I$ be an interval and $A\subset I$ be a subset of $I$ with $|A|\geq \rho |I|$. Then, for every polynomial $Q$ of order $k$ on $I$,
\[
\max_{t\in I}|Q(t)|\lesssim_{\rho,k} \sup_{t\in A}|Q(t)|\qquad\text{and}\qquad \int_I |Q(t)|\dif t\lesssim_{\rho,k} \int_A |Q(t)|\dif t.
\]
\end{prop}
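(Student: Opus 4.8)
The plan is to reduce everything to the interval $I=[0,1]$ by an affine change of variables, under which both asserted inequalities and the hypothesis $|A|\ge\rho|I|$ are invariant. On $[0,1]$, a polynomial of order $k$ is just a polynomial of degree at most $k-1$, and the whole argument rests on one quantitative observation: from a set $A$ of measure at least $\rho$ one can extract $k$ nodes that are well separated, with separation depending only on $\rho$ and $k$.

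First I would prove the $L^\infty$ inequality. Set $\eta:=\rho/(2k)$ and choose points $x_1,\dots,x_k\in A$ greedily: having picked $x_1,\dots,x_j$ with $j<k$, the set $A\setminus\bigcup_{i\le j}(x_i-\eta,x_i+\eta)$ has measure at least $\rho-2(k-1)\eta=\rho/k>0$, so a further point $x_{j+1}\in A$ at distance $\ge\eta$ from all previous ones exists. Relabelling so that $x_1<\dots<x_k$, consecutive gaps are $\ge\eta$, hence $|x_j-x_i|\ge|j-i|\,\eta$ for all $i,j$. Writing $Q$ via Lagrange interpolation at $x_1,\dots,x_k$ (exact since $\deg Q\le k-1$), the fundamental polynomials satisfy, for $t\in[0,1]$,
\[
|\ell_j(t)|\le\frac{\prod_{i\ne j}|t-x_i|}{\prod_{i\ne j}|x_j-x_i|}\le\frac{1}{\eta^{k-1}\prod_{i\ne j}|j-i|}\le\frac{1}{\eta^{k-1}},
\]
so $\max_{t\in[0,1]}|Q(t)|\le\frac{k}{\eta^{k-1}}\max_{1\le j\le k}|Q(x_j)|\le\frac{k}{\eta^{k-1}}\sup_{t\in A}|Q(t)|$, which is the first inequality with a constant depending only on $\rho$ and $k$.

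For the $L^1$ inequality I would feed the $L^\infty$ bound back into a sublevel-set estimate. Put $M:=\max_{[0,1]}|Q|$ and, for $\delta>0$, $A_\delta:=\{t\in[0,1]:|Q(t)|\le\delta M\}$. If $|A_\delta|\ge\rho/2$, then applying the first inequality with parameter $\rho/2$ to the set $A_\delta$ would give $M\le C(\rho/2,k)\,\delta M$, which is impossible once $\delta<1/C(\rho/2,k)$. Fixing such $\delta=\delta(\rho,k)$ we get $|A_\delta|<\rho/2$, hence $|A\cap\{|Q|>\delta M\}|\ge|A|-\rho/2\ge\rho/2$, and therefore
\[
\int_A|Q(t)|\dif t\ge\int_{A\cap\{|Q|>\delta M\}}|Q(t)|\dif t\ge\frac{\rho\delta}{2}\,M\ge\frac{\rho\delta}{2}\int_0^1|Q(t)|\dif t,
\]
where the last step uses only $\int_0^1|Q|\le M$. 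This is the second inequality.

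The only real work is in the first inequality, and within it the delicate point is the greedy construction producing $k$ nodes inside $A$ with a separation $\eta$ that depends solely on $\rho$ and $k$; once those nodes are available, the Lagrange estimate and the deduction of the $L^1$ statement are routine. Alternatively, the first inequality is a special case of the Remez inequality, which could be invoked instead of the node-selection argument.
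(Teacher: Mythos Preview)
Your proof is correct. The paper does not actually prove this proposition; it is merely stated as a recalled elementary fact about polynomials, so there is no argument to compare against. Your approach---extracting $k$ well-separated nodes from $A$ via a greedy selection, bounding the Lagrange basis polynomials, and then bootstrapping the $L^\infty$ estimate into a sublevel-set bound for the $L^1$ inequality---is a clean and self-contained justification of the stated result.
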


We continue with recalling a few important results for B-splines $(N_i)$ and their dual functions $(N_i^*)$.
\begin{prop}\label{prop:lpstab}
Let $1\leq p\leq \infty$ and $g=\sum_{j=1}^{n+k-1} a_j N_j$, where the collection $(N_{i})_{i=1}^{n+k-1}$ are the B-splines of order $k$ corresponding to the partition $\mathcal T_n$. Then,
\begin{equation}\label{eq:lpstab}
|a_j|\lesssim_k |J_j|^{-1/p}\|g\|_{L^p(J_j)},\qquad 1\leq j\leq n+k-1,
\end{equation}
where $J_j$ is a subinterval $[\tau_{n,i},\tau_{n,i+1}]$ of $[\tau_{n,j},\tau_{n,j+k}]$ of maximal length. Additionally,
\begin{equation}\label{eq:deboorlpstab}
\|g\|_p\sim_k \Big(\sum_{j=1}^{n+k-1} |a_j|^p |D_{n,j}^{(k)}|\Big)^{1/p}=\| (a_j|D_{n,j}^{(k)}|^{1/p})_{j=1}^{n+k-1}\|_{\ell^p}.
\end{equation}
Moreover, if $h=\sum_{j=1}^{n+k-1} b_j N_j^*$,
\begin{equation}
\|h\|_p\lesssim_k\Big(\sum_{j=1}^{n+k-1} |b_j|^p |D_{n,j}^{(k)}|^{1-p}\Big)^{1/p}=\|(b_j|D_{n,j}^{(k)}|^{1/p-1})_{j=1}^{n+k-1}\|_{\ell^p}.
\label{eq:lpstabdual}
\end{equation}
\end{prop}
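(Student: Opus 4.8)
The plan is to prove the three estimates \eqref{eq:lpstab}, \eqref{eq:deboorlpstab}, \eqref{eq:lpstabdual} in that order, since each rests on the previous one; all are classical facts of de Boor's B-spline calculus. For \eqref{eq:lpstab} I would first fix $j$ and localize to $J_j=[\tau_{n,i},\tau_{n,i+1}]$, the longest knot interval contained in $\supp N_j=D_{n,j}^{(k)}$. On $J_j$ the spline $g$ is a polynomial of order $k$ and $g|_{J_j}=\sum_m a_m N_m|_{J_j}$, the sum running over exactly the $k$ indices $m$ with $J_j\subset\supp N_m$. These $k$ restrictions are linearly independent, hence form a basis of the $k$-dimensional space of polynomials of order $k$ on $J_j$; letting $\lambda_j$ be the functional dual to $N_j|_{J_j}$ in this basis, we have $a_j=\lambda_j(g|_{J_j})$. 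After an affine rescaling of $J_j$ to $[0,1]$, the estimate \eqref{eq:lpstab} is precisely the assertion that the norm of $\lambda_j$ on the polynomials of order $k$ equipped with the $L^p$-norm is $\lesssim_k|J_j|^{-1/p}$, uniformly in $n$ and in the position of the remaining knots of $\mathcal T_n$; this is de Boor's bound for local dual B-spline functionals, and it is the one genuinely nontrivial input at this stage.

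For the lower half of \eqref{eq:deboorlpstab}, \eqref{eq:lpstab} gives $|a_j|^p|J_j|\lesssim_k\|g\|_{L^p(J_j)}^p$; since $D_{n,j}^{(k)}$ is a union of $k$ knot intervals and $J_j$ is the longest of them, $|J_j|\ge|D_{n,j}^{(k)}|/k$, so $|a_j|^p|D_{n,j}^{(k)}|\lesssim_k\|g\|_{L^p(J_j)}^p$; summing over $j$ and noting that a fixed knot interval serves as $J_m$ for at most $k$ values of $m$ yields $\sum_j|a_j|^p|D_{n,j}^{(k)}|\lesssim_k\|g\|_p^p$. For the upper half, on any knot interval $I$ the partition-of-unity property gives $|g(x)|\le\max\{|a_m|:I\subset\supp N_m\}$ for $x\in I$, whence $\|g\|_{L^p(I)}^p\le\sum_{m:\,I\subset\supp N_m}|a_m|^p|I|\le\sum_{m:\,I\subset\supp N_m}|a_m|^p|D_{n,m}^{(k)}|$; summing over the knot intervals $I$ and using that $\supp N_m$ contains exactly $k$ of them gives $\|g\|_p^p\lesssim_k\sum_j|a_j|^p|D_{n,j}^{(k)}|$. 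The case $p=\infty$ is the obvious modification, read off directly from \eqref{eq:lpstab} and the partition of unity.

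For \eqref{eq:lpstabdual} I would argue by duality. Let $1/p+1/p'=1$ and let $Q$ denote the $L^2$-orthogonal projection onto $\mathcal S_n$. For $h=\sum_j b_j N_j^*$ and any $g\in L^{p'}$ with $\|g\|_{p'}\le1$, self-adjointness of $Q$ and $h\in\mathcal S_n$ give $\langle h,g\rangle=\langle h,Qg\rangle$, and writing $Qg=\sum_i c_i N_i$ the biorthogonality $\langle N_j^*,N_i\rangle=\delta_{ij}$ yields $\langle h,Qg\rangle=\sum_j b_j c_j$. Applying Hölder's inequality to the sequences $(b_j|D_{n,j}^{(k)}|^{1/p-1})_j\in\ell^p$ and $(c_j|D_{n,j}^{(k)}|^{1/p'})_j\in\ell^{p'}$ (the exponents add to zero since $1/p+1/p'=1$), then the already-proved equivalence \eqref{eq:deboorlpstab} for $Qg$ with exponent $p'$, and finally the mesh-independent bound $\|Qg\|_{p'}\lesssim_k\|g\|_{p'}$, we get $|\langle h,g\rangle|\lesssim_k\|(b_j|D_{n,j}^{(k)}|^{1/p-1})_j\|_{\ell^p}$. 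Since $\|h\|_p=\sup\{|\langle h,g\rangle|:\|g\|_{p'}\le1\}$ for every $1\le p\le\infty$, taking the supremum gives \eqref{eq:lpstabdual}.

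The real content of all three statements is the \emph{uniformity} of the implied constants in $n$ and in the mesh, so the main obstacles are the two mesh-independent inputs: de Boor's bound on the local dual B-spline functionals (used for \eqref{eq:lpstab}, and hence for \eqref{eq:deboorlpstab}), and, for the hard half of \eqref{eq:lpstabdual}, the mesh-independent $L^{p'}$-boundedness of the orthogonal projection $Q$, i.e.\ Shadrin's theorem as recalled in the introduction. Alternatively, \eqref{eq:lpstabdual} can be deduced from the mesh-independent, exponentially decaying pointwise estimates for the dual functions $N_j^*$ obtained in \cite{PassenbrunnerShadrin2014}, which keeps the argument within the present circle of ideas; I expect this dual estimate to be the delicate point, whereas \eqref{eq:lpstab} and \eqref{eq:deboorlpstab} are routine once de Boor's functional bound is in hand.
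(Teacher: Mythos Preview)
Your proposal is correct and mirrors exactly what the paper does: the paper does not prove Proposition~\ref{prop:lpstab} but simply cites \cite[Chapter~5, Lemmas~4.1--4.2]{DeVoreLorentz1993} for \eqref{eq:lpstab}--\eqref{eq:deboorlpstab} and \cite[Property~P.7]{Ciesielski2000} (itself a consequence of Shadrin's theorem) for \eqref{eq:lpstabdual}, and your sketch is precisely the standard argument behind those references, with the same two nontrivial inputs (de~Boor's uniform bound on the local dual functionals and the mesh-independent $L^p$-boundedness of the orthogonal projection deduced from Shadrin).
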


The two inequalites \eqref{eq:lpstab} and \eqref{eq:deboorlpstab} are Lemma 4.1 and Lemma 4.2 in \cite[Chapter 5]{DeVoreLorentz1993}, respectively. Inequality \eqref{eq:lpstabdual} is a consequence of Shadrin's theorem \cite{Shadrin2001}, that the orthogonal projection operator onto $\mathcal S_{n}^{(k)}$ is bounded on $L^\infty$ independently of $n$ and $\mathcal T_n$. For a deduction of \eqref{eq:lpstabdual} from this result, see \cite[Property P.7]{Ciesielski2000}.

The next thing to consider are estimates for the inverse $(b_{ij})_{i,j=1}^{n+k-1}$ of the Gram matrix $(\langle N_{i},N_{j}\rangle)_{i,j=1}^{n+k-1}$. Later, we will need one special property of this matrix, which is that $(b_{ij})_{i,j=1}^{n+k-1}$ is checkerboard, i.e.,
\begin{equation}\label{eq:checkerboard}
(-1)^{i+j} b_{ij}\geq 0\quad \text{for all }i,j.
\end{equation}
This is a simple consequence of the total positivity of the Gram matrix $(\langle N_{i},N_{j}\rangle)_{i,j=1}^{n+k-1}$, cf. \cite{deBoor1968,Karlin1968}.
Moreover, we need the following lower estimate for $b_{i,i}$:
\begin{equation}\label{estbi:lower}
|D^{(k)}_{n,i}|^{-1}  \lesssim_k b_{i,i}.
\end{equation}
 This estimate is a consequence of the total positivity of the $B$-spline Gram matrix, the $L^2$-stability of $B$-splines and the following Lemma \ref{lem:estdiaginverse}

\begin{lem}[\cite{Passenbrunner2013}]\label{lem:estdiaginverse}
Let $C=(c_{ij})_{i,j=1}^n$ be a symmetric positive definite matrix. Then, for $(d_{ij})_{i,j=1}^n=C^{-1}$ we have
\[
c_{ii}^{-1}\leq d_{ii},\qquad 1\leq i\leq n.
\]
\end{lem}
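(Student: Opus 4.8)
The plan is to reduce the assertion to a direct computation using the defining relation $C C^{-1}=\mathrm{Id}$ together with the positive definiteness of $C$. Fix $i$. The key observation is that the $(i,i)$-entry of $C^{-1}$ can be characterized variationally: since $C$ is symmetric positive definite, so is $C^{-1}=(d_{ij})$, and one has the well-known formula
\[
d_{ii}=\max_{x\neq 0}\frac{\langle x,e_i\rangle^2}{\langle Cx,x\rangle},
\]
where $e_i$ is the $i$-th standard basis vector and $\langle\cdot,\cdot\rangle$ denotes the Euclidean inner product. Indeed, writing $y=C^{1/2}x$, the right-hand side equals $\max_{y\neq 0}\langle C^{-1/2}y,e_i\rangle^2/\|y\|^2=\|C^{-1/2}e_i\|^2=\langle C^{-1}e_i,e_i\rangle=d_{ii}$, with the maximum attained at $x=C^{-1}e_i$.

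Given this, I would simply test the quotient with the particular choice $x=e_i$: this yields
\[
d_{ii}=\max_{x\neq 0}\frac{\langle x,e_i\rangle^2}{\langle Cx,x\rangle}\geq \frac{\langle e_i,e_i\rangle^2}{\langle Ce_i,e_i\rangle}=\frac{1}{c_{ii}},
\]
which is exactly the claimed inequality $c_{ii}^{-1}\leq d_{ii}$. Note $c_{ii}>0$ because $C$ is positive definite, so the division is legitimate.

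There is no serious obstacle here; the only point requiring a little care is the justification of the variational formula for $d_{ii}$, but this is standard (it is the Cauchy–Schwarz / Rayleigh-quotient characterization applied to the inverse, or equivalently to $C^{1/2}$). An alternative, entirely elementary route avoiding square roots: apply the Cauchy–Schwarz inequality to the $i$-th row of the identity $\sum_j c_{ij}d_{ji}=1$ in the inner product induced by $C^{-1}$, or equivalently use that $\langle Cv,v\rangle\langle C^{-1}w,w\rangle\geq \langle v,w\rangle^2$ for all $v,w$; taking $v=w=e_i$ gives $c_{ii}d_{ii}\geq 1$. Either way the proof is two lines.
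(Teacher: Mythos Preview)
Your proof is correct. Both the variational characterization $d_{ii}=\max_{x\neq 0}\langle x,e_i\rangle^2/\langle Cx,x\rangle$ tested at $x=e_i$, and the direct Cauchy--Schwarz argument $\langle Cv,v\rangle\langle C^{-1}w,w\rangle\geq\langle v,w\rangle^2$ with $v=w=e_i$, are valid and standard.

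Note, however, that the paper does not actually supply its own proof of this lemma: it is stated with a citation to \cite{Passenbrunner2013} and used as a black box to derive \eqref{estbi:lower}. So there is no in-paper argument to compare against. Your second route (the one-line Cauchy--Schwarz inequality for the bilinear forms associated to $C$ and $C^{-1}$) is the most economical self-contained justification and would fit naturally if one wanted to make the paper independent of the reference at this point.
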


\subsection{Some results for orthonormal spline systems} We recall now two results
concerning  orthonormal spline {series}, which  we will need in the sequel.

\begin{thm}[\cite{PassenbrunnerShadrin2014}]\label{thm:ae}
Let $(f_n)_{n=-k+2}^\infty$ be the orthonormal spline system of order $k$ corresponding to an arbitrary $k$-admissible point sequence $(t_n)_{n=0}^\infty$. Then, for an arbitrary $f\in L^1\equiv L^1[0,1]$, the series $\sum_{n=-k+2}^\infty \langle f,f_n\rangle f_n$ converges to $f$ almost everywhere.
\end{thm}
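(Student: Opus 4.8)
The plan is to deduce the almost everywhere convergence from a weak type $(1,1)$ bound for the maximal operator
\[
P^* f := \sup_{n\geq -k+2} |P_n f|, \qquad P_n f := \sum_{m=-k+2}^n \langle f,f_m\rangle f_m ,
\]
together with convergence on a dense subclass. Here $P_n$ is the orthogonal projection of $L^2$ onto $\mathcal S_n$, extended to $L^1$ via its (symmetric) kernel. Since $\mathcal T$ is dense, $\bigcup_n \mathcal S_n$ is dense in $C[0,1]$; combined with Shadrin's theorem (the bound $\|P_n\|_{L^\infty\to L^\infty}\lesssim_k 1$, cf. \cite{Shadrin2001}) this gives $\|P_n f-f\|_\infty\to 0$ for every continuous $f$, and $C[0,1]$ is dense in $L^1$. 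By the standard Banach principle (a maximal inequality plus convergence on a dense set forces a.e. convergence everywhere), it suffices to prove $|[P^*f>\lambda]|\lesssim_k \|f\|_1/\lambda$.

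For this I would first record the uniform bounds $\|P_n\|_{L^p\to L^p}\lesssim_k 1$ for $1\leq p\leq\infty$, obtained from Shadrin's $L^\infty$ estimate together with self-adjointness of $P_n$ (giving $p=1$) and Riesz--Thorin interpolation. The essential pointwise input is then the kernel estimate: writing $P_nf(x)=\int_0^1 K_n(x,y)f(y)\dif y$ with $K_n(x,y)=\sum_{i,j} b_{ij}^{(n)}N_{n,i}(x)N_{n,j}(y)$, the sharp estimates of Passenbrunner--Shadrin \cite{PassenbrunnerShadrin2014} for the inverse Gram matrix $(b_{ij}^{(n)})$ yield a Demko-type geometric decay
\[
|b_{ij}^{(n)}|\lesssim_k \frac{q^{|i-j|}}{\big(|D^{(k)}_{n,i}|\,|D^{(k)}_{n,j}|\big)^{1/2}},\qquad q=q(k)\in(0,1),
\]
and hence a bound on $|K_n(x,y)|$ that decays geometrically in the number of knots of $\mathcal T_n$ separating $x$ and $y$, uniformly in $n$; the diagonal estimate \eqref{estbi:lower} and the $L^p$-stability of Proposition~\ref{prop:lpstab} are also used here.

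With this in hand I would run a Calder\'on--Zygmund decomposition of $f$ at height $\lambda$: $f=g+b$ with $\|g\|_\infty\lesssim\lambda$, $\|g\|_1\lesssim\|f\|_1$ and $b=\sum_Q b_Q$, where the $b_Q$ are supported on pairwise disjoint intervals $Q$ with $\sum_Q|Q|\lesssim\|f\|_1/\lambda$, $\int b_Q=0$ and $\|b_Q\|_1\lesssim\lambda|Q|$. The good part is controlled through $|[P^*g>\lambda]|\leq \lambda^{-2}\|P^*g\|_2^2\lesssim \lambda^{-1}\|g\|_1$, where the $L^2$ bound for $P^*g$ is obtained from an almost-orthogonality argument ($TT^*$/Cotlar type) applied to the one-dimensional differences $P_n-P_{n-1}=\langle\cdot,f_n\rangle f_n$, using the kernel decay above. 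For the bad part one estimates $\int_{(2Q)^c} P^*b_Q$ for each $Q$: using $\int b_Q=0$ to replace $K_n(x,y)$ by $K_n(x,y)-K_n(x,c_Q)$ with $c_Q$ the center of $Q$, one splits by scale --- if the mesh of $\mathcal T_n$ near $Q$ is coarser than $|Q|$ then $K_n(x,\cdot)$ is a single polynomial of order $k$ on an interval containing $Q$ and a derivative/H\"older estimate gives the classical CZ bound, while if it is finer the geometric decay in the number of separating knots already makes the contribution summable. Summing over $Q$ gives $\int_{(\bigcup_Q 2Q)^c}P^*b\lesssim\sum_Q\|b_Q\|_1\lesssim\|f\|_1$, and $|\bigcup_Q 2Q|\lesssim\|f\|_1/\lambda$, which completes the weak type estimate.

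The main obstacle, and the point where the recent sharp results are indispensable, is establishing the uniform geometric kernel bound and then making the bad-part estimate work simultaneously for all $n$ --- in particular controlling $P_nb_Q$ when $Q$ contains many knots of $\mathcal T_n$, a regime in which one cannot use smoothness of $K_n(x,\cdot)$ across $Q$ and must instead extract cancellation from the interplay of $\int b_Q=0$ with the oscillation and exponential decay of the spline projection kernel. The uniform $L^2$ bound for $P^*$ needed for the good part is likewise non-trivial and rests on the same decay estimates.
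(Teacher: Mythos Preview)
The paper does not prove this theorem; it is quoted from \cite{PassenbrunnerShadrin2014}. The route taken there is the one recorded immediately afterwards as Theorem~\ref{thm:maxbound}: from the same inverse--Gram estimates you invoke one shows the \emph{pointwise} domination $P^*f(t)=Sf(t)\lesssim_k \mathcal Mf(t)$. Almost everywhere convergence then drops out at once, since $\mathcal M$ is of weak type $(1,1)$ and convergence holds on the dense class $C[0,1]$ (by Shadrin's theorem, exactly as you say). So both approaches rest on the identical kernel input, but the pointwise comparison with $\mathcal M$ is shorter, avoids the Calder\'on--Zygmund machinery entirely, and yields a strictly stronger conclusion than the weak type bound you aim for.

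Your outline is basically workable, but there is one genuine soft spot. For the good part you propose to bound $\|P^*g\|_2$ by ``an almost-orthogonality argument ($TT^*$/Cotlar type) applied to the one-dimensional differences $P_n-P_{n-1}$''. Cotlar--Stein controls $\big\|\sum_j T_j\big\|$, not $\big\|\sup_n\big|\sum_{j\le n}T_j\big|\big\|$; mutual orthogonality of the increments by itself gives only Rademacher--Menshov with a logarithmic loss, which is not enough. The clean fix is to bypass $L^2$ altogether: since $\|g\|_\infty\lesssim\lambda$ and, by Shadrin's theorem, $\|P_n\|_{L^\infty\to L^\infty}\lesssim_k 1$, one has $P^*g\lesssim_k\lambda$ pointwise, so $[P^*g>C_k\lambda]=\emptyset$. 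With that replacement your Calder\'on--Zygmund scheme for the bad part (cancellation of $b_Q$ against a polynomial piece of $K_n(x,\cdot)$ at coarse scales, geometric knot-count decay at fine scales) goes through and reproduces the weak type inequality --- but note that this is precisely a disguised proof of $P^*f\lesssim_k \mathcal Mf$, which is why the direct pointwise argument is preferable.
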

Let $f\in L^p\equiv L^p[0,1]$ for some $1\leq p<\infty$. Since the orthonormal spline system $(f_n)_{n\geq -k+2}$ is a basis in $L^p$, we can write $f=\sum_{n=-k+2}^\infty a_n f_n$. Based on this expansion, we define the \emph{square function} $Pf:=\big(\sum_{n=-k+2}^\infty |a_n f_n|^2\big)^{1/2}$ and the \emph{maximal function} $Sf:=\sup_m \big| \sum_{n\leq m} a_n f_n \big|$.
Moreover, given a measurable function $g$, we denote by $\mathcal Mg$ the \emph{Hardy-Littlewood maximal function} of $g$ defined as
\[
\mathcal Mg(x):=\sup_{I\ni x} |I|^{-1} \int_I |g(t)|\dif t,
\]
where the supremum is taken over all intervals $I$ containing the point $x$. The connection between the maximal function $Sf$ and the Hardy-Littlewood maximal function is given by the following result:

\begin{thm}[\cite{PassenbrunnerShadrin2014}]\label{thm:maxbound}
If $f\in L^1$, we have
\[
Sf(t)\lesssim_k \mathcal M f(t),\qquad t\in[0,1].
\]
\end{thm}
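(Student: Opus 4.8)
The plan is to identify the partial sums of the orthonormal spline expansion with orthogonal projections onto the spline spaces $\mathcal S_m$, to write these projections through the inverse B-spline Gram matrix, and then to use the exponential off-diagonal decay of that matrix to dominate everything pointwise by $\mathcal Mf$.

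First I would observe that, by construction of the system, $\lin(f_{-k+2},\dots,f_m)=\mathcal S_m$ for every $m\ge -k+2$ (with $\mathcal S_m$ the relevant polynomial space when $m\le 1$). Hence, for $f\in L^1$, the partial sum $\sum_{n\le m}\langle f,f_n\rangle f_n$ equals $P_mf$, the orthogonal projection of $f$ onto $\mathcal S_m$, understood via its kernel $P_mf(x)=\int_0^1 K_m(x,y)f(y)\dif y$ (which is well defined on $L^1$ since $K_m(x,\cdot)\in\mathcal S_m$ is bounded). So $Sf=\sup_m|P_mf|$, and it suffices to prove $|P_mf(x)|\lesssim_k\mathcal Mf(x)$ for all $x\in[0,1]$, with a constant independent of $m$ and of $\mathcal T_m$. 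For $m\le 1$ this is immediate, since $K_m$ is the reproducing kernel of a fixed finite-dimensional space of polynomials of order $\le k$ on $[0,1]$, so $\|K_m\|_\infty\lesssim_k1$ and $|P_mf(x)|\le\|K_m\|_\infty\int_0^1|f|\lesssim_k\mathcal Mf(x)$, taking $I=[0,1]$ in the supremum defining $\mathcal Mf(x)$.

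For $m\ge 2$ I would expand $P_m$ in the B-spline basis $(N_i):=(N_{m,i}^{(k)})$ and its dual $(N_i^*)$: with $(b_{ij})$ the inverse of the Gram matrix $(\langle N_i,N_j\rangle)$ one has $N_i^*=\sum_j b_{ij}N_j$ and
\[
P_mf(x)=\sum_i\langle f,N_i^*\rangle N_i(x)=\sum_i N_i(x)\sum_j b_{ij}\langle f,N_j\rangle.
\]
Since $0\le N_j\le 1$ we have $|\langle f,N_j\rangle|\le\int_{D_{m,j}^{(k)}}|f|$, and whenever $N_i(x)\ne0$ the point $x$ lies in $D_{m,i}^{(k)}=\supp N_i$, so the interval $H_{ij}:=\operatorname{conv}(D_{m,i}^{(k)}\cup D_{m,j}^{(k)})$ contains $x$ and contains $\supp N_j$, whence $|\langle f,N_j\rangle|\le|H_{ij}|\,\mathcal Mf(x)$. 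The decisive ingredient is then the sharp estimate for the inverse B-spline Gram matrix from \cite{PassenbrunnerShadrin2014} (which rests on Shadrin's theorem; compare \eqref{eq:lpstabdual} and \eqref{estbi:lower}): there is $q=q_k\in(0,1)$ with $|b_{ij}|\,|H_{ij}|\lesssim_k q^{|i-j|}$, the quantitative exponential localization of the projection kernel. Inserting this and using that $(N_i)$ is a partition of unity,
\[
|P_mf(x)|\le\mathcal Mf(x)\sum_i N_i(x)\sum_j|b_{ij}|\,|H_{ij}|\lesssim_k\mathcal Mf(x)\sum_i N_i(x)\sum_j q^{|i-j|}\lesssim_k\mathcal Mf(x),
\]
because $\sum_j q^{|i-j|}\le(1+q)/(1-q)$ and $\sum_i N_i(x)=1$. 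Taking the supremum over $m$ completes the argument.

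The hard part is wholly concentrated in the localization bound $|b_{ij}|\,|H_{ij}|\lesssim_k q^{|i-j|}$ — equivalently, an exponential off-diagonal decay estimate for $K_m$, uniform over arbitrary $k$-admissible partitions — which is precisely where Shadrin's resolution of de Boor's conjecture and the refined Gram-inverse bounds of \cite{PassenbrunnerShadrin2014} are needed. With only the unquantified $L^\infty$-boundedness of $P_m$, i.e. $\int_0^1|K_m(x,y)|\dif y\lesssim_k1$, the summation would not close: in the absence of any regularity hypothesis, neighbouring grid intervals may have arbitrarily different lengths, so $|H_{ij}|/|D_{m,j}^{(k)}|$ is unbounded, and it is exactly the geometric factor $q^{|i-j|}$ that makes the two-sided series converge. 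The remaining points — the behaviour of the B-splines at the endpoints $0$ and $1$, where the knots have multiplicity $k$, and the measurability and a.e.\ finiteness of $\mathcal Mf$ — are routine.
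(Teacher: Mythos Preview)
The paper does not supply its own proof of this statement; Theorem~\ref{thm:maxbound} is quoted from \cite{PassenbrunnerShadrin2014} and used as a black box. Your sketch is correct and is precisely the argument carried out in that reference: identify the partial sums with the orthogonal projections $P_m$, expand $P_m$ in the B-spline basis and its dual, and apply the main estimate of \cite{PassenbrunnerShadrin2014}, namely $|b_{ij}|\cdot|\operatorname{conv}(D_{m,i}^{(k)}\cup D_{m,j}^{(k)})|\lesssim_k q^{|i-j|}$ for some $q\in(0,1)$ depending only on $k$, to conclude $|P_mf(x)|\lesssim_k\mathcal Mf(x)$ uniformly in $m$ and $\mathcal T_m$.
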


\section{Properties of orthogonal spline functions and characteristic intervals}\label{sec:proporth}
\subsection{Estimates for $f_n$}
This section treats the calculation and estimation of one explicit orthonormal spline function $f_n^{(k)}$ for fixed $k\in\mathbb N$ and $n\geq 2$ induced by the $k$-admissible sequence $(t_n)_{n=0}^\infty$. {Most of the presented results are taken from \cite{Passenbrunner2013}.}

Here, we change our notation slightly. We fix the parameter $n$ and let $i_0$ be an index with $k+1\leq i_0\leq n+k-1$ such that $\mathcal T_{n-1}$ equals $\mathcal T_n$ with the point $\tau_{i_0}$ removed.
In the points of the partition $\mathcal T_n$, we omit the parameter $n$ and $\mathcal T_n$ is thus given by
\begin{align*}
\mathcal T_n=(0=\tau_1=\dots=\tau_k&<\tau_{k+1}\leq\dots\leq\tau_{i_0}\\
&\leq\dots\leq\tau_{n+k-1}<\tau_{n+k}=\dots=\tau_{n+2k-1}=1).
\end{align*}
We denote by $(N_i:1\leq i\leq n+k-1)$ the B-spline functions corresponding to $\mathcal T_n$.

An (unnormalized) orthogonal spline function $g\in \mathcal S_{n}^{(k)}$ that is orthogonal to $\mathcal S_{n-1}^{(k)}$, as calculated in \cite{Passenbrunner2013}, is given by

\begin{equation}\label{eq:defg}
g=\sum_{j=i_0-k}^{i_0} \alpha_j N_j^* =\sum_{j=i_0-k}^{i_0} \sum_{\ell=1}^{n+k-1} \alpha_j b_{j\ell} N_\ell,
\end{equation}
where $(b_{j\ell})_{j,\ell=1}^{n+k-1}$ is the inverse of the Gram matrix $(\langle N_j,N_\ell\rangle)_{j,\ell=1}^{n+k-1}$ and the sequence $(\alpha_j)$ is given by
\begin{equation}\label{eq:alpha2}
\alpha_j=(-1)^{j-i_0+k}\Big(\prod_{\ell=i_0-k+1}^{j-1}\frac{\tau_{i_0}-\tau_{\ell}}{\tau_{\ell+k}-\tau_{\ell}}\Big)\Big(\prod_{\ell=j+1}^{i_0-1}\frac{\tau_{\ell+k}-\tau_{i_0}}{\tau_{\ell+k}-\tau_{\ell}}\Big),\quad i_0-k\leq j\leq i_0.
\end{equation}
We remark that the sequence $(\alpha_j)$ alternates in sign and since the matrix $(b_{j\ell})_{j,\ell=1}^{n+k-1}$ is checkerboard, we see that the B-spline coefficients of $g$, namely
\begin{equation}\label{eq:defwj}
w_\ell:=\sum_{j=i_0-k}^{i_0} \alpha_j b_{j\ell},\qquad 1\leq \ell\leq n+k-1,
\end{equation}
satisfy
\begin{equation}\label{eq:betragreinziehen}
\Big| \sum_{j=i_0-k}^{i_0}\alpha_j b_{j\ell}\Big|= \sum_{j=i_0-k}^{i_0}|\alpha_j b_{j\ell}|,\qquad 1\leq j\leq n+k-1.
\end{equation}

In the following Definition \ref{def:characteristic}, we assign to each orthonormal spline function a characteristic interval that is a grid point interval $[\tau_i,\tau_{i+1}]$ and lies in the proximity of the newly inserted point $\tau_{i_0}$. The choice of this interval is crucial for proving important properties of the system $(f_n^{(k)})_{n=-k+2}^\infty$. This approach has its origins in \cite{GevKam2004}, where it is proved that general Franklin systems are unconditional bases in $L^p$, $1<p<\infty$.
\begin{defin}\label{def:characteristic}
Let $\mathcal T_{n},\mathcal T_{n-1}$ be as above and $\tau_{i_0}$ be the new point in $\mathcal T_n$ that is not present in $\mathcal T_{n-1}$. We define the \emph{characteristic interval $J_n$ corresponding to the pair $(\mathcal T_n,\mathcal T_{n-1})$} as follows.
\begin{enumerate}
\item
Let
\[
\Lambda^{(0)}:=\{i_0-k\leq j\leq i_0 : |[\tau_j,\tau_{j+k}]|\leq 2\min_{i_0-k\leq \ell\leq i_0}|[\tau_\ell,\tau_{\ell+k}]| \}
\]
be the set of all indices $j$ for which the corresponding support of the B-spline function $N_j$ is approximately minimal. Observe that $\Lambda^{(0)}$ is nonempty.
\item Define
\[
\Lambda^{(1)}:=\{j\in \Lambda^{(0)}: |\alpha_j|=\max_{\ell\in \Lambda^{(0)}} |\alpha_\ell|\}.
\]
For an arbitrary, but fixed index $j^{(0)}\in \Lambda^{(1)}$, set $J^{(0)}:=[\tau_{j^{(0)}},\tau_{j^{(0)}+k}]$.
\item The interval $J^{(0)}$ can now be written as the union of $k$ grid intervals
\[
J^{(0)}=\bigcup_{\ell=0}^{k-1}[\tau_{j^{(0)}+\ell},\tau_{j^{(0)}+\ell+1}]\qquad\text{with }j^{(0)}\text{ as above}.
\]
We define the \emph{characteristic interval} $J_n$ to be one of the above $k$ intervals that has maximal length.
\end{enumerate}
\end{defin}

A few clarifying comments to this definitions are in order. Roughly speaking, we
first take the B-spline support $[\tau_j,\tau_{j+k}]$ intersecting the new point
$\tau_{i_0}$ with minimal length and then we choose as $J_n$ the largest grid
point interval in $[\tau_j,\tau_{j+k}]$. This definition guarantees the
concentration of $f_n$ at $J_n$ in terms of the $L^p$-norm (cf. Lemma
\ref{lem:orthsplineJinterval}) and the exponential decay of $f_n$ away from
$J_n$ (cf. Lemma \ref{lem:lporthspline}), which are
crucial for further investigations. An important ingredient in the proof of Lemma
\ref{lem:orthsplineJinterval} is Proposition \ref{prop:lpstab},
being the reason why we choose the largest grid point interval as $J_n$.
Further important properties of the collection $(J_n)$ of characteristic
intervals are 
that they form a nested family of sets and for a subsequence of decreasing characteristic
intervals, their lengths decay geometrically (cf. Lemma \ref{lem:jinterval}).

Next we remark that the constant $2$ in step (1) of Definition
\ref{def:characteristic} could also be an arbitrary number
$C>1$, but $C=1$ is not allowed. This is in contrast to the definition of
characteristic intervals in \cite{GevKam2004} for piecewise linear orthogonal
functions $(k=2)$, where precisely $C=1$ is chosen,
step~(2) of Definition \ref{def:characteristic} is omitted and $j^{(0)}$ is an
arbitrary index in the set $\Lambda^{(0)}$.

 At first glance, it might seem
 natural to carry over the same definition to arbitrary spline orders $k$, but
 at some point in the proof of Theorem~\ref{thm:estwj}, we estimate
 $\alpha_{j^{(0)}}$ by the constant $(C-1)$ from below, which has to be strictly
 greater than zero in order to establish \eqref{eq:estwj}. Since
 Theorem~\ref{thm:estwj} is also used in the proofs of both
 Lemma~\ref{lem:orthsplineJinterval} and Lemma \ref{lem:lporthspline},
 this is the reason for a different definition of characteristic
 intervals here than in \cite{GevKam2004}, in particular for step~(2) of Definition
 \ref{def:characteristic}. 

\begin{thm}[\cite{Passenbrunner2013}]\label{thm:estwj}
With the above definition \eqref{eq:defwj} of $w_\ell$ for $1\leq \ell\leq n+k-1$ and the index $j^{(0)}$ given in Definition \ref{def:characteristic},
\begin{equation}\label{eq:estwj}
|w_{j^{(0)}}|\gtrsim_k b_{j^{(0)},j^{(0)}}.
\end{equation}
\end{thm}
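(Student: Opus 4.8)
The goal is to show $|w_{j^{(0)}}| \gtrsim_k b_{j^{(0)},j^{(0)}}$, where $w_\ell = \sum_{j=i_0-k}^{i_0} \alpha_j b_{j\ell}$. Because the inverse Gram matrix is checkerboard \eqref{eq:checkerboard} and the $\alpha_j$ alternate in sign, the terms $\alpha_j b_{j\ell}$ all have the same sign for fixed $\ell$ \eqref{eq:betragreinziehen}; hence $|w_{j^{(0)}}| = \sum_{j=i_0-k}^{i_0} |\alpha_j b_{j,j^{(0)}}| \geq |\alpha_{j^{(0)}}| b_{j^{(0)},j^{(0)}}$, since every summand is nonnegative and we may keep only the diagonal one. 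So the whole problem reduces to obtaining the lower bound $|\alpha_{j^{(0)}}| \gtrsim_k 1$.

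To bound $|\alpha_{j^{(0)}}|$ from below I would use the explicit product formula \eqref{eq:alpha2} together with the two defining properties of $j^{(0)}$ from Definition \ref{def:characteristic}: first, $j^{(0)} \in \Lambda^{(0)}$, meaning $|[\tau_{j^{(0)}},\tau_{j^{(0)}+k}]| \leq 2 \min_{i_0-k\le\ell\le i_0}|[\tau_\ell,\tau_{\ell+k}]|$; and second, $j^{(0)} \in \Lambda^{(1)}$, meaning $|\alpha_{j^{(0)}}| = \max_{\ell\in\Lambda^{(0)}}|\alpha_\ell|$. The key structural observation is that each factor $\frac{\tau_{i_0}-\tau_\ell}{\tau_{\ell+k}-\tau_\ell}$ or $\frac{\tau_{\ell+k}-\tau_{i_0}}{\tau_{\ell+k}-\tau_\ell}$ in \eqref{eq:alpha2} lies in $[0,1]$, so $|\alpha_j| \leq 1$ always, and moreover consecutive $\alpha_j$ are related by a single such ratio: indeed from \eqref{eq:alpha2} one reads off, for $i_0-k\le j < i_0$, a recursion of the form $|\alpha_{j+1}|/|\alpha_j| = \frac{\tau_{i_0}-\tau_{j}}{\tau_{j+k}-\tau_{i_0}}$ (up to checking the precise index bookkeeping), which rearranges to express one of $|\alpha_j|, |\alpha_{j+1}|$ as a definite fraction of the other depending on where $\tau_{i_0}$ sits inside $[\tau_j,\tau_{j+k}]$ relative to the subinterval $[\tau_{j+1}, \tau_{j+k}]$ or $[\tau_j,\tau_{j+k-1}]$.

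The plan is then: among the $k+1$ indices $j \in \{i_0-k,\dots,i_0\}$, the supports $[\tau_j,\tau_{j+k}]$ are nested-overlapping and all contain $\tau_{i_0}$; pick the index $m$ realizing the minimal support length. One shows $m \in \Lambda^{(0)}$ trivially, and one shows $|\alpha_m| \gtrsim_k 1$ by a telescoping argument: starting from the normalization-type identity that the $|\alpha_j|$ cannot all be small (e.g., using that $\sum$ or $\max$ of the relevant products is controlled, or more directly that the ratios multiplying into $\alpha_m$ from either end are each bounded below once we know $[\tau_m,\tau_{m+k}]$ is the shortest — because then $\tau_{i_0}$ cannot be too close to either endpoint $\tau_m$ or $\tau_{m+k}$ relative to the lengths of the comparably-sized neighboring supports). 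Concretely, if $\tau_{i_0}$ were within distance $\varepsilon |[\tau_m,\tau_{m+k}]|$ of $\tau_m$, then the support $[\tau_{m-1},\tau_{m+k-1}]$ — which also contains $\tau_{i_0}$ — would be forced to be shorter than $[\tau_m,\tau_{m+k}]$, contradicting minimality; this pins $\tau_{i_0}$ into a central portion of $[\tau_m,\tau_{m+k}]$ and makes each factor in \eqref{eq:alpha2} for $j=m$ bounded below by a constant depending only on $k$. Finally, since $m\in\Lambda^{(0)}$ and $j^{(0)}$ maximizes $|\alpha_\cdot|$ over $\Lambda^{(0)}$, we get $|\alpha_{j^{(0)}}| \geq |\alpha_m| \gtrsim_k 1$, and combining with the first paragraph yields $|w_{j^{(0)}}| \geq |\alpha_{j^{(0)}}| b_{j^{(0)},j^{(0)}} \gtrsim_k b_{j^{(0)},j^{(0)}}$.

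The main obstacle I anticipate is the combinatorial bookkeeping in the second and third paragraphs: correctly reading the telescoping/recursive structure out of \eqref{eq:alpha2}, and pinning down the geometric claim that the shortest $B$-spline support among those hitting $\tau_{i_0}$ must contain $\tau_{i_0}$ in a $k$-dependent central region. The role of the constant $C=2$ (rather than $C=1$) in step (1) of Definition \ref{def:characteristic} is exactly what gives room here — it is what forces $\tau_{i_0}$ away from the endpoints by a definite amount — and getting that quantitative dependence clean is the crux. The checkerboard/sign-alternation reduction itself is immediate and not where the difficulty lies.
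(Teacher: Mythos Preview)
The paper does not itself prove this theorem; it is quoted from \cite{Passenbrunner2013}. The surrounding commentary, however, confirms that your reduction is the right one: the paper remarks that ``at some point in the proof of Theorem~\ref{thm:estwj}, we estimate $\alpha_{j^{(0)}}$ by the constant $(C-1)$ from below,'' so the argument in \cite{Passenbrunner2013} indeed passes through the checkerboard identity \eqref{eq:betragreinziehen} to obtain $|w_{j^{(0)}}|\ge |\alpha_{j^{(0)}}|\,b_{j^{(0)},j^{(0)}}$ and then shows $|\alpha_{j^{(0)}}|\gtrsim_k 1$. Your first paragraph is therefore exactly on target.

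Your proposed mechanism for the bound $|\alpha_{j^{(0)}}|\gtrsim_k 1$, on the other hand, does not work as stated. You take $m$ minimizing $|[\tau_m,\tau_{m+k}]|$ and claim that if $\tau_{i_0}$ were within $\varepsilon|[\tau_m,\tau_{m+k}]|$ of $\tau_m$, then $[\tau_{m-1},\tau_{m+k-1}]$ would have to be shorter than $[\tau_m,\tau_{m+k}]$. This is false: the length $\tau_{m+k-1}-\tau_{m-1}$ depends on $\tau_{m-1}$, which is entirely unconstrained by the position of $\tau_{i_0}$ inside $[\tau_m,\tau_{m+k}]$; one can place $\tau_{m-1}$ far to the left and make that neighboring support arbitrarily long. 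There is a second, more structural problem: even granting that $\tau_{i_0}$ sits centrally in $[\tau_m,\tau_{m+k}]$, the product \eqref{eq:alpha2} for $|\alpha_m|$ consists of ratios $\frac{\tau_{i_0}-\tau_\ell}{\tau_{\ell+k}-\tau_\ell}$ and $\frac{\tau_{\ell+k}-\tau_{i_0}}{\tau_{\ell+k}-\tau_\ell}$ with $\ell\neq m$. These measure where $\tau_{i_0}$ sits inside the \emph{other} supports $[\tau_\ell,\tau_{\ell+k}]$, not inside $[\tau_m,\tau_{m+k}]$, so centrality in the latter gives no control over them. In short, knowing that $m$ realizes the minimal support length does not by itself force any factor of $|\alpha_m|$ to be bounded away from zero.

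What the paper's remark makes clear is that the actual argument in \cite{Passenbrunner2013} leans on both ingredients of Definition~\ref{def:characteristic}: the slack constant $C=2$ in step~(1), which is what ultimately produces the lower bound $C-1$ (and is the reason $C=1$ is explicitly excluded), together with the maximization in step~(2). Your outline uses step~(2) correctly at the end, but the role of the constant $2$ in step~(1) is not the ``central region'' picture you describe; it enters instead through a comparison between indices in $\Lambda^{(0)}$ and indices outside it.
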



\begin{lem}[\cite{Passenbrunner2013}]\label{lem:orthsplineJinterval}Let $\mathcal T_n,\,\mathcal T_{n-1}$ be as above and $g$ be the function given in \eqref{eq:defg}. Then, $f_n=g/\|g\|_2$  is the $L^2$-normalized orthogonal spline function corresponding to $(\mathcal T_n,\mathcal T_{n-1})$ and
\[
\|f_n\|_{L^p(J_n)}\sim_k\|f_n\|_p\sim_k |J_n|^{1/p-1/2}\sim_k |J_n|^{1/2}\|g\|_p,\qquad 1\leq p\leq \infty,
\]
where $J_n$ is the characteristic interval associated to $(\mathcal T_n,\mathcal T_{n-1})$.
\end{lem}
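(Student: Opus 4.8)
The plan is to establish the chain of equivalences
\[
\|f_n\|_{L^p(J_n)}\sim_k\|f_n\|_p\sim_k |J_n|^{1/p-1/2}\sim_k |J_n|^{1/2}\|g\|_p
\]
by first working with the unnormalized function $g=\sum_{\ell=1}^{n+k-1} w_\ell N_\ell$, where $w_\ell$ is as in \eqref{eq:defwj}, and proving that $\|g\|_p\sim_k |w_{j^{(0)}}| |J^{(0)}|^{1/p}$ together with $\|g\|_{L^p(J_n)}\sim_k\|g\|_p$; the statement for $f_n=g/\|g\|_2$ then follows by dividing through by $\|g\|_2$ and using $\|g\|_2\sim_k |w_{j^{(0)}}||J^{(0)}|^{1/2}\sim_k |w_{j^{(0)}}||J_n|^{1/2}$ (note $|J_n|\sim_k|J^{(0)}|$ since $J_n$ is one of $k$ grid intervals comprising $J^{(0)}$ and, by the choice in step (3) of Definition \ref{def:characteristic} together with $k$-admissibility, it has length comparable to $|J^{(0)}|$).

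First I would prove the lower bound $\|g\|_p\gtrsim_k |w_{j^{(0)}}||J^{(0)}|^{1/p}$. By \eqref{eq:deboorlpstab}, $\|g\|_p\sim_k\big(\sum_\ell |w_\ell|^p|D_{n,\ell}^{(k)}|\big)^{1/p}\geq |w_{j^{(0)}}||D_{n,j^{(0)}}^{(k)}|^{1/p}=|w_{j^{(0)}}||J^{(0)}|^{1/p}$. Next, by Theorem \ref{thm:estwj} we have $|w_{j^{(0)}}|\gtrsim_k b_{j^{(0)},j^{(0)}}$, and by \eqref{estbi:lower} we have $b_{j^{(0)},j^{(0)}}\gtrsim_k |D_{n,j^{(0)}}^{(k)}|^{-1}=|J^{(0)}|^{-1}$; hence $|w_{j^{(0)}}||J^{(0)}|^{1/p}\gtrsim_k |J^{(0)}|^{1/p-1}$. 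For the matching upper bound, I would use the biorthogonal representation $g=\sum_{j=i_0-k}^{i_0}\alpha_j N_j^*$ from \eqref{eq:defg} together with \eqref{eq:lpstabdual}: this gives $\|g\|_p\lesssim_k\big(\sum_{j=i_0-k}^{i_0}|\alpha_j|^p|D_{n,j}^{(k)}|^{1-p}\big)^{1/p}$. By the normalization in step (2) of Definition \ref{def:characteristic}, $|\alpha_j|\leq |\alpha_{j^{(0)}}|\leq 1$ for $j\in\Lambda^{(0)}$, and for $j\notin\Lambda^{(0)}$ one still has $|\alpha_j|\leq 1$ directly from the product formula \eqref{eq:alpha2} (each factor is at most $1$ in modulus). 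Moreover, for $j\in\Lambda^{(0)}$, $|D_{n,j}^{(k)}|\sim_k|J^{(0)}|$, while for $j\notin\Lambda^{(0)}$, $|D_{n,j}^{(k)}|\gtrsim_k|J^{(0)}|$, so in all cases $|D_{n,j}^{(k)}|^{1-p}\lesssim_k|J^{(0)}|^{1-p}$ when $p\geq 1$; summing over the at most $k+1$ indices gives $\|g\|_p\lesssim_k |J^{(0)}|^{1/p-1}$. This proves $\|g\|_p\sim_k|J^{(0)}|^{1/p-1}\sim_k|J_n|^{1/p-1}$, and combined with the lower bound also $\|g\|_p\sim_k |w_{j^{(0)}}||J^{(0)}|^{1/p}$.

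It remains to show the concentration $\|g\|_{L^p(J_n)}\sim_k\|g\|_p$; the nontrivial direction is the lower bound $\|g\|_{L^p(J_n)}\gtrsim_k\|g\|_p\sim_k|J_n|^{1/p-1}$. On the grid interval $J_n=[\tau_i,\tau_{i+1}]$ (one of the $k$ intervals in $J^{(0)}$), the restriction $g|_{J_n}$ is a single polynomial of order $k$, and its B-spline coefficients on this interval include $w_{j^{(0)}}$ among the relevant ones. Using \eqref{eq:lpstab} in reverse — more precisely, the $L^\infty$ version applied with $p=\infty$, which says $|w_\ell|\lesssim_k |J_n|^{-1}\|g\|_{L^\infty(J_n)}$ whenever $J_n$ is a maximal-length grid subinterval of $\supp N_\ell$ — together with the fact that, by step (3) of the definition, $J_n$ is indeed a maximal-length grid subinterval of $J^{(0)}=\supp N_{j^{(0)}}$, we obtain $\|g\|_{L^\infty(J_n)}\gtrsim_k |J_n||w_{j^{(0)}}|\gtrsim_k |J_n|\cdot|J_n|^{-1}=1\cdot|J_n|^{0}$, hence $\|g\|_{L^p(J_n)}\geq|J_n|^{1/p}\cdot\big(|J_n|^{-1}\sup\big)$... more carefully: Proposition \ref{prop:poly} lets me pass between $L^\infty(J_n)$ and $L^p(J_n)$ norms of the polynomial $g|_{J_n}$ up to $|J_n|^{1/p}$ factors, so $\|g\|_{L^p(J_n)}\gtrsim_k |J_n|^{1/p}\|g\|_{L^\infty(J_n)}\gtrsim_k |J_n|^{1/p}|J_n||w_{j^{(0)}}|$, wait — I must instead start from $|w_{j^{(0)}}|\lesssim_k|J_n|^{-1}\|g\|_{L^\infty(J_n)}$, giving $\|g\|_{L^\infty(J_n)}\gtrsim_k|J_n||w_{j^{(0)}}|\gtrsim_k|J_n|\cdot|J^{(0)}|^{-1}\sim_k 1$, and thus $\|g\|_{L^p(J_n)}\sim_k\|g\|_{L^\infty(J_n)}|J_n|^{1/p}\gtrsim_k|J_n|^{1/p}$. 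Comparing with $\|g\|_p\sim_k|J_n|^{1/p-1}$ shows I am off by a factor $|J_n|$, so the correct bookkeeping uses $|w_{j^{(0)}}|\gtrsim_k|J^{(0)}|^{-1}$ to get $\|g\|_{L^\infty(J_n)}\gtrsim_k|J_n|\cdot|J^{(0)}|^{-1}$; since $|J_n|\sim_k|J^{(0)}|$ this is $\gtrsim_k 1$... the honest statement is that $\|g\|_{L^\infty(J^{(0)})}\gtrsim_k |w_{j^{(0)}}|\gtrsim_k|J^{(0)}|^{-1}$ by \eqref{eq:lpstab} with maximal subinterval $J_n$, hence $\|g\|_{L^p(J_n)}\gtrsim_k|J_n|^{1/p}\|g\|_{L^\infty(J_n)}$ and $\|g\|_{L^\infty(J_n)}\sim_k\|g\|_{L^\infty(J^{(0)})}$ by Proposition \ref{prop:poly} applied to the polynomial pieces, giving $\|g\|_{L^p(J_n)}\gtrsim_k|J_n|^{1/p}|J^{(0)}|^{-1}\sim_k|J_n|^{1/p-1}\sim_k\|g\|_p$, as desired. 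I expect this last bookkeeping — correctly threading the $L^\infty$-stability estimate \eqref{eq:lpstab}, Theorem \ref{thm:estwj}, and the polynomial comparison on a single grid interval without losing a power of $|J_n|$ — to be the main technical obstacle; the rest is a routine assembly of the cited inequalities and the observation $|J_n|\sim_k|J^{(0)}|$.
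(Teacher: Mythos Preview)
The paper does not prove this lemma; it is quoted from \cite{Passenbrunner2013}. Your overall strategy is the standard one and is correct: prove $\|g\|_p\sim_k|J^{(0)}|^{1/p-1}$ via the B-spline and dual B-spline stability estimates, prove the concentration $\|g\|_{L^p(J_n)}\sim_k\|g\|_p$, and then normalize. Your upper bound using \eqref{eq:lpstabdual} together with $|\alpha_j|\leq 1$ and $|D_{n,j}^{(k)}|\gtrsim_k|J^{(0)}|$ is clean, and your lower bound via \eqref{eq:deboorlpstab}, Theorem~\ref{thm:estwj} and \eqref{estbi:lower} is correct as well. The observation $|J^{(0)}|\leq k|J_n|$ (since $J_n$ is the longest of the $k$ grid intervals in $J^{(0)}$) is elementary and needs no appeal to $k$-admissibility.

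The concentration step, however, is where your write-up becomes confused, and it actually contains two genuine slips. First, your $L^\infty$ version of \eqref{eq:lpstab} is misquoted: for $p=\infty$ the factor is $|J_n|^{-1/\infty}=1$, so the correct statement is $|w_{j^{(0)}}|\lesssim_k\|g\|_{L^\infty(J_n)}$, not $|J_n|^{-1}\|g\|_{L^\infty(J_n)}$; this is precisely why you end up ``off by a factor $|J_n|$''. Second, the patch $\|g\|_{L^\infty(J_n)}\sim_k\|g\|_{L^\infty(J^{(0)})}$ ``by Proposition~\ref{prop:poly} applied to the polynomial pieces'' is not justified: $g$ restricted to $J^{(0)}$ is a spline with up to $k$ polynomial pieces, not a single polynomial, so Proposition~\ref{prop:poly} does not give this comparison. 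The whole detour is unnecessary. Inequality \eqref{eq:lpstab} already holds for every $1\leq p\leq\infty$ with the norm taken on the maximal subinterval, and by step~(3) of Definition~\ref{def:characteristic} that maximal subinterval for $N_{j^{(0)}}$ is exactly $J_n$. Hence directly
\[
|w_{j^{(0)}}|\lesssim_k |J_n|^{-1/p}\|g\|_{L^p(J_n)},
\]
and combining with $|w_{j^{(0)}}|\gtrsim_k|J^{(0)}|^{-1}\sim_k|J_n|^{-1}$ gives $\|g\|_{L^p(J_n)}\gtrsim_k|J_n|^{1/p-1}\sim_k\|g\|_p$ in one line. This is the argument you were circling; once you apply \eqref{eq:lpstab} with the correct exponent and on the correct interval, the rest of your proof is fine.
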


By $d_{n} (x)$ we denote the number of points in $\mathcal T_n$ between $x$ and $J_n$ counting endpoints of $J_n$. Correspondingly, for an interval $V\subset [0,1]$, by $d_{n}(V)$ we denote the number of points in $\mathcal T_n$ between $V$ and $J_n$ counting endpoints of both $J_n$ and $V$.

\begin{lem}[\cite{Passenbrunner2013}]\label{lem:lporthspline}
Let $\mathcal T_n,\mathcal T_{n-1}$ be as above, $g=\sum_{j=1}^{n+k-1} w_jN_j$
be the function in \eqref{eq:defg} with $(w_j)_{j=1}^{n+k-1}$ as in
\eqref{eq:defwj} and $f_n = g/\|g\|_2$. Then, there exists a constant $0<q<1$ that depends only on $k$ such that.
\begin{equation}\label{eq:wj}
|w_j|\lesssim_k \frac{q^{d_{n}(\tau_j)}}{|J_n|+\dist(\supp N_j,J_n)+|D_{n,j}^{k}|}\quad\text{for all }1\leq j\leq n+k-1.
\end{equation}
Moreover, if $x<\inf J_n$, we have
\begin{equation}\label{eq:phiplinks}
\|f_n\|_{L^p(0,x)}
\lesssim_k \frac{q^{d_{n}(x)}|J_n|^{1/2}}{(|J_n|+\dist(x,J_n))^{1-1/p}},\qquad 1\leq p\leq \infty.
\end{equation}
Similarly, for $x>\sup J_n$,
\begin{equation}\label{eq:phiprechts}
\|f_n\|_{L^p(x,1)}
\lesssim_k \frac{q^{d_{n}(x)}|J_n|^{1/2}}{(|J_n|+\dist(x,J_n))^{1-1/p}},\qquad 1\leq p\leq \infty.
\end{equation}
\end{lem}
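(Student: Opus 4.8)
The statement to prove is Lemma~\ref{lem:lporthspline}, giving the exponential decay estimate \eqref{eq:wj} for the B-spline coefficients $w_j$ of the unnormalized orthogonal spline function $g$, and its consequences \eqref{eq:phiplinks}, \eqref{eq:phiprechts} for the $L^p$-norms of $f_n$ on intervals to the left and right of the characteristic interval $J_n$. Since this lemma is attributed to \cite{Passenbrunner2013}, my plan mirrors the strategy there: first establish \eqref{eq:wj} using the known off-diagonal decay of the inverse Gram matrix, and then derive \eqref{eq:phiplinks}--\eqref{eq:phiprechts} from \eqref{eq:wj} via the $L^p$-stability of B-splines (Proposition~\ref{prop:lpstab}) and the geometric decay of the distance weights.

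\textbf{Step 1: Decay of the B-spline coefficients.} Recall from \eqref{eq:defwj}, \eqref{eq:betragreinziehen} that $|w_\ell| = \sum_{j=i_0-k}^{i_0} |\alpha_j|\, |b_{j\ell}|$, so I need to bound each $|\alpha_j|\,|b_{j\ell}|$. The key external input is the exponential decay estimate for the inverse B-spline Gram matrix from \cite{PassenbrunnerShadrin2014}: there is a constant $0<q<1$ depending only on $k$ with $|b_{j\ell}| \lesssim_k q^{|j-\ell|} / \max(|D_{n,j}^{(k)}|, |D_{n,\ell}^{(k)}|)$ (or the analogous form with the geometric distance measured in grid points). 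Combining this with the fact that each $|\alpha_j| \leq 1$ from \eqref{eq:alpha2} (each factor in the products is a ratio of lengths of nested intervals, hence in $[0,1]$), I get $|w_\ell| \lesssim_k \sum_{j=i_0-k}^{i_0} q^{|j-\ell|}/|D_{n,\ell}^{(k)}|$. The number of terms is bounded by $k+1$, and since all the indices $j$ in the sum are within $k$ of $i_0$, we have $q^{|j-\ell|} \sim_k q^{|i_0-\ell|}$; moreover $|i_0 - \ell|$ is comparable (up to additive and multiplicative constants depending on $k$) to $d_n(\tau_\ell)$, the number of grid points between $\tau_\ell$ and $J_n$, because $J_n \subset J^{(0)} = [\tau_{j^{(0)}}, \tau_{j^{(0)}+k}]$ with $|j^{(0)} - i_0| \leq k$. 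Finally, to replace the denominator $|D_{n,\ell}^{(k)}|$ by $|J_n| + \dist(\supp N_\ell, J_n) + |D_{n,\ell}^{(k)}|$, I observe that any excess in the denominator — large $|J_n|$ or large distance — can be absorbed by spending a factor of $q^{\text{(number of grid points spanned)}}$, which is again controlled by a power of $q$ times $q^{d_n(\tau_\ell)}$ up to adjusting $q$; this is where Lemma~\ref{lem:geom}-type geometric reasoning (here without a regularity assumption, just using that crossing many grid points forces a denominator comparison) is used. This yields \eqref{eq:wj}.

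\textbf{Step 2: From coefficient decay to $L^p$ norms.} For $x < \inf J_n$, write $g = \sum_j w_j N_j$ and restrict to $(0,x)$. Only those B-splines $N_j$ whose support meets $(0,x)$ contribute. By Proposition~\ref{prop:lpstab}, specifically \eqref{eq:deboorlpstab} applied on the relevant sub-partition, $\|g\|_{L^p(0,x)} \lesssim_k \big(\sum_{j:\,\supp N_j \cap (0,x)\neq\emptyset} |w_j|^p |D_{n,j}^{(k)}|\big)^{1/p}$. Insert \eqref{eq:wj}: each term becomes $\lesssim_k q^{p\, d_n(\tau_j)} |D_{n,j}^{(k)}| / (|J_n| + \dist(\supp N_j, J_n) + |D_{n,j}^{(k)}|)^p$. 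For $j$ with support meeting $(0,x)$ we have $\dist(\supp N_j, J_n) + |D_{n,j}^{(k)}| \gtrsim_k \dist(x, J_n)$ only partially — more carefully, I split the sum according to the value of $d_n(\tau_j)$: the $B$-splines contributing to a fixed "shell" of grid-point-distance have total length comparable to the length of that shell, and summing the geometric series $\sum q^{p\,d}$ over shells, weighted by the length ratios, telescopes to give $\lesssim_k q^{p\, d_n(x)} |J_n| / (|J_n| + \dist(x,J_n))^p$ after taking the $p$-th root. Multiplying through, and recalling $\|g\|_p \sim_k |J_n|^{-1/2}$ from Lemma~\ref{lem:orthsplineJinterval} so that $f_n = g/\|g\|_2$ satisfies $\|f_n\|_{L^p(0,x)} = \|g\|_{L^p(0,x)}/\|g\|_2 \sim_k |J_n|^{1/2} \|g\|_{L^p(0,x)}$, wait — more precisely $\|g\|_2 \sim_k |J_n|^{-1/2}$ hence dividing by it multiplies by $|J_n|^{1/2}$ — gives exactly \eqref{eq:phiplinks}. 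The case $x > \sup J_n$ in \eqref{eq:phiprechts} is symmetric.

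\textbf{Main obstacle.} The delicate point is Step~1: the passage from the raw bound $|w_\ell| \lesssim_k q^{|i_0 - \ell|}/|D_{n,\ell}^{(k)}|$ to the sharp form \eqref{eq:wj} with the improved denominator $|J_n| + \dist(\supp N_\ell, J_n) + |D_{n,\ell}^{(k)}|$. One must carefully track how the exponential gain in the Gram inverse can be traded against the geometry of the partition — in particular using that between $\supp N_\ell$ and $J_n$ there are at least roughly $d_n(\tau_\ell)$ grid points, and that a B-spline support spanning $m$ grid intervals has length at least the length of any one of them, so that a run of small intervals between $N_\ell$ and $J_n$ still costs exponentially in the coefficient estimate. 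A secondary technical nuisance is the bookkeeping in Step~2 when organizing the B-splines into distance shells and summing the resulting geometric series against the varying interval lengths; this is routine but must be done so that the constants depend only on $k$.
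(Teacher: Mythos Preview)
The paper does not prove this lemma; it is quoted verbatim from \cite{Passenbrunner2013} with no argument given here, so there is no in-paper proof to compare against. Your sketch follows the right architecture from that reference --- feed the inverse Gram matrix decay into the formula $|w_\ell|=\sum_{j=i_0-k}^{i_0}|\alpha_j||b_{j\ell}|$, then push the coefficient bound through the $L^p$-stability inequality \eqref{eq:deboorlpstab} and sum a geometric series in the grid-point distance.

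There is, however, a genuine gap in Step~1. The form of the inverse Gram matrix estimate you quote, $|b_{j\ell}|\lesssim_k q^{|j-\ell|}/\max(|D_{n,j}^{(k)}|,|D_{n,\ell}^{(k)}|)$, is not strong enough by itself to produce the denominator $|J_n|+\dist(\supp N_\ell,J_n)+|D_{n,\ell}^{(k)}|$ in \eqref{eq:wj}. Your proposed fix --- ``spend'' part of the exponential factor to absorb a large $\dist(\supp N_\ell,J_n)$, invoking ``Lemma~\ref{lem:geom}-type reasoning without a regularity assumption'' --- does not work: Lemma~\ref{lem:geom} genuinely requires $\ell$-regularity, and without it there is no a priori relation between the number of grid points separating $N_\ell$ from $J_n$ and the Euclidean distance. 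A single very long grid interval between them contributes only one unit to $d_n(\tau_\ell)$ but an arbitrarily large amount to $\dist(\supp N_\ell,J_n)$. What actually makes the argument go through in \cite{Passenbrunner2013} is the sharper estimate from \cite{PassenbrunnerShadrin2014},
\[
|b_{j\ell}|\lesssim_k \frac{q^{|j-\ell|}}{\displaystyle\max_{\min(j,\ell)\le m\le \max(j,\ell)}|D_{n,m}^{(k)}|},
\]
in which the denominator already records the largest B-spline support \emph{anywhere between} indices $j$ and $\ell$. Since $\dist(\supp N_\ell,J_n)$ is covered by such supports, this maximum automatically dominates both $|J_n|$ and (up to a factor depending only on $k$) the distance term. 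With this correct input, no trading of exponential factors and no regularity hypothesis is needed, and \eqref{eq:wj} follows directly. Step~2 is then essentially as you describe; the shell summation is routine once \eqref{eq:wj} is in hand.
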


\subsection{Combinatorics of characteristic intervals}\label{sec:comb}
{Next, we recall a combinatorial result about the relative positions of different characteristic intervals:}
\begin{lem}[\cite{Passenbrunner2013}]\label{lem:jinterval}
Let $x,y\in (t_n)_{n=0}^\infty$ such that $x<y$. Then there exists a constant $F_{k}$ only depending on $k$ such that
\[
N_0:=\card\{n:J_n\subseteq [x,y], |J_n|\geq|[x,y]|/2\} \leq F_{k},
\]
where $\card E$ denotes the cardinality of the set $E$.
\end{lem}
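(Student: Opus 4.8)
The plan is to prove Lemma~\ref{lem:jinterval} by a counting argument based on the structure of the characteristic intervals. Fix $x<y$ in $(t_n)_{n=0}^\infty$ and let $\mathcal{N}:=\{n:J_n\subseteq [x,y],\ |J_n|\geq |[x,y]|/2\}$. The first observation is that each $J_n$ with $n\in\mathcal{N}$ occupies at least half of $[x,y]$, so any two of them must intersect; thus the family $(J_n)_{n\in\mathcal{N}}$ is pairwise intersecting. In fact, since each $J_n$ is a grid point interval of $\mathcal{T}_n$ and lies near the new point $\tau_{i_0}^{(n)}$, I would recall (from the properties stated around Definition~\ref{def:characteristic}, or from \cite{Passenbrunner2013}) that the collection of \emph{all} characteristic intervals is nested in the strong sense: for $m<n$ either $J_m\cap J_n$ has empty interior, or $J_n\subseteq J_m$, or $J_m\subseteq J_n$. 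Combined with pairwise intersection, this forces $(J_n)_{n\in\mathcal{N}}$ to be totally ordered by inclusion, so we may list them as $J_{n_1}\supseteq J_{n_2}\supseteq\cdots$ (up to the measure-zero ambiguity of shared endpoints).

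Next I would show that this chain cannot be too long, using the geometric decay of lengths along a decreasing chain of characteristic intervals. Since every $J_{n_j}$ in the chain satisfies $|[x,y]|/2\leq |J_{n_j}|\leq |[x,y]|$, the ratio between the top and bottom of the chain is at most $2$. On the other hand, there is a lemma (stated in \cite{Passenbrunner2013}; cf. the discussion preceding Lemma~\ref{lem:jinterval} about geometric decay of lengths of decreasing characteristic intervals, analogous to Lemma~\ref{lem:geom}) guaranteeing a constant $0<\theta_k<1$ such that whenever $J_{m}\supsetneq J_{m'}$ is a strict inclusion separated by a fixed number $r_k$ of steps in the chain, one has $|J_{m'}|\leq \theta_k |J_m|$. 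Hence a chain of length $L$ in which the lengths only vary by a factor of $2$ can have at most $r_k\cdot\log_{1/\theta_k}2 + O(1)$ distinct \emph{lengths}, and therefore at most that many strict inclusions; call this bound $M_k$.

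It remains to control how many indices $n$ can share essentially the same characteristic interval, i.e.\ to bound the number of $n\in\mathcal{N}$ with a fixed $J_n=[\tau_p,\tau_{p+1}]$ (as a subinterval of $[x,y]$, up to endpoints). Here I would argue that $J_n$ being the characteristic interval of $(\mathcal{T}_n,\mathcal{T}_{n-1})$ pins down the new knot $\tau_{i_0}^{(n)}$ to lie within a bounded B-spline support of order $k$ around $J_n$, and only boundedly many successive knot insertions can occur inside a region whose grid structure near $J_n$ has not yet been refined past a fixed combinatorial complexity — each such insertion strictly increases the number of grid points in a fixed neighborhood, but once too many are inserted, the minimal B-spline support through the new point becomes smaller than $|J_n|$ and $J_n$ can no longer be the characteristic interval. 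This yields a bound $P_k$, depending only on $k$, on the number of $n\in\mathcal{N}$ with a given $J_n$. Putting the two bounds together gives $N_0\leq (M_k+1)\cdot P_k=:F_k$.

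The main obstacle I anticipate is the last step — bounding the multiplicity of a single characteristic interval. Total ordering and geometric decay are relatively clean consequences of results already available, but the claim that only $O_k(1)$ knot insertions can keep producing the same characteristic interval requires a careful local analysis of Definition~\ref{def:characteristic}: one must track how $\Lambda^{(0)}$, $\Lambda^{(1)}$, and the selection of the largest grid interval inside $J^{(0)}$ evolve as knots accumulate near $J_n$, and rule out pathological configurations (e.g.\ knots piling up just outside $J_n$ on alternating sides). I would handle this by quantifying that each relevant insertion either shrinks the candidate minimal support by a definite factor or increases a bounded integer counter, so it can happen only $O_k(1)$ times; the bookkeeping is where the real work lies, and the cleanest route may be to simply invoke the corresponding statement and its proof from \cite{Passenbrunner2013} if it is available in the form needed.
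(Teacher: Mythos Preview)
The paper does not prove this lemma at all; it is quoted verbatim from \cite{Passenbrunner2013} without argument, so there is no ``paper's proof'' to compare your sketch against.

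Evaluating your outline on its own merits: the skeleton (nesting $\Rightarrow$ chain $\Rightarrow$ bound chain length $\Rightarrow$ bound multiplicity of a fixed $J_n$) is reasonable, and the nesting step is clean and correct. Since $J_n$ is always a single grid interval of $\mathcal T_n$ and the grids refine, for $m<n$ one automatically has either $J_n\subseteq J_m$ or $\operatorname{int}J_n\cap\operatorname{int}J_m=\emptyset$; combined with $|J_n|\geq |[x,y]|/2$ this does force a chain.

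There is, however, a genuine gap in the remaining two steps. Your bound on the chain length appeals to a ``geometric decay along decreasing chains of characteristic intervals'' that you attribute to \cite{Passenbrunner2013} and to the discussion preceding the lemma. But read that discussion: in this paper the geometric decay is explicitly recorded as a \emph{consequence} of Lemma~\ref{lem:jinterval} (``for a subsequence of decreasing characteristic intervals, their lengths decay geometrically (cf.\ Lemma~\ref{lem:jinterval})''), not as an independent input. Invoking it here is circular. Likewise, your multiplicity bound --- how many $n$ can share the same characteristic interval --- is left as a heuristic, as you yourself note. In effect both substantive steps are deferred back to \cite{Passenbrunner2013}, which is exactly what the paper already does by citing the result. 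A self-contained argument must supply the combinatorics directly: for each $n\in\mathcal N$ the new knot $t_n=\tau_{n,i_0}$ lies in $J^{(0)}_n\supset J_n$, hence within at most $k-1$ grid intervals of $J_n$ in $\mathcal T_n$, and one then has to quantify how many successive insertions of this type can occur before the local grid around $[x,y]$ forces every candidate $J^{(0)}$ to have its largest subinterval shorter than $|[x,y]|/2$. That bookkeeping is the actual content of the lemma, and it is absent from your sketch.
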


{Similarly to \cite{GevKam2004} and \cite{GevorkyanKamont2005}, we need the following estimate involving characteristic intervals and orthonormal spline functions:}
\begin{lem}\label{lem:JnsubsetV}
  Let $(t_n)$ be a $k$-admissible point sequence in $[0,1]$ and let
  $(f_n)_{n\geq -k+2}$ be the corresponding orthonormal spline system of order
  $k$. 
Then, for each interval $V=[\alpha,\beta]\subset [0,1]$,
\[
\sum_{n:J_n\subset V} |J_n|^{1/2} \int_{V^c} |f_n(t)|\dif t\lesssim_k |V|.
\]
\end{lem}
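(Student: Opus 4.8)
The plan is to split the sum over $n$ according to the size of $J_n$ relative to $V$, and within each size class to use the exponential decay estimates \eqref{eq:phiplinks} and \eqref{eq:phiprechts} together with the combinatorial bound of Lemma \ref{lem:jinterval}. Fix $V=[\alpha,\beta]$. For $n$ with $J_n\subset V$, write $J_n=[\tau_{n,i},\tau_{n,i+1}]$ and decompose $V^c=(0,\alpha)\cup(\beta,1)$; by symmetry it suffices to bound $\sum_{n:J_n\subset V}|J_n|^{1/2}\int_0^\alpha|f_n(t)|\dif t$ and its counterpart on $(\beta,1)$. Applying \eqref{eq:phiplinks} with $x=\alpha$ (note $\alpha=\inf V\geq \sup J_n$ is false in general — rather $\alpha<\inf J_n$ since $J_n\subset V$, so the hypothesis $x<\inf J_n$ is met) gives
\[
\int_0^\alpha |f_n(t)|\dif t = \|f_n\|_{L^1(0,\alpha)} \lesssim_k q^{d_n(\alpha)}|J_n|^{1/2},
\]
so that $|J_n|^{1/2}\int_0^\alpha|f_n|\lesssim_k q^{d_n(\alpha)}|J_n|$. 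Thus the whole sum is controlled by $\sum_{n:J_n\subset V} q^{d_n(\alpha)}|J_n|$ plus the analogous term with $d_n(\beta)$.

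The key step is then to show $\sum_{n:J_n\subset V} q^{d_n(\alpha)}|J_n|\lesssim_k |V|$, and here is where the geometric decay of characteristic intervals enters. First I would organize the $n$'s with $J_n\subset V$ into generations by dyadic size: let $G_m=\{n: J_n\subset V,\ 2^{-m-1}|V|<|J_n|\leq 2^{-m}|V|\}$ for $m\geq 0$. Within a fixed generation $G_m$, the intervals $J_n$ are pairwise nested or essentially disjoint up to bounded overlap — more precisely, using that the $(J_n)$ form a nested family (as recalled after Definition \ref{def:characteristic}) and that Lemma \ref{lem:jinterval} bounds by $F_k$ the number of $J_n$ of comparable size inside any given interval, one sees that each point of $V$ lies in at most $O(F_k)$ of the $J_n$ with $n\in G_m$; hence $\sum_{n\in G_m}|J_n|\lesssim_k |V|$. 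If there were no decay factor this would only give a bound of $\sum_m |V|=\infty$, so the factor $q^{d_n(\alpha)}$ must be exploited: for $n\in G_m$ with $J_n\subset V$ and $J_n$ deep inside (far from $\alpha$), $d_n(\alpha)$ is large, while for $J_n$ near the left endpoint $\alpha$ one instead uses that there cannot be too many large $J_n$ clustering near $\alpha$. The cleanest route is to further split each $G_m$ by the value $d_n(\alpha)=:d$: the number of $n\in G_m$ with $d_n(\alpha)=d$ and $|J_n|\sim 2^{-m}|V|$ is $O(F_k)$ (again by Lemma \ref{lem:jinterval}, since such $J_n$ all lie in an interval of length $\lesssim 2^{-m}|V|$ adjacent to a fixed grid point), and for those $|J_n|\lesssim 2^{-m}|V|$, so
\[
\sum_{n:J_n\subset V} q^{d_n(\alpha)}|J_n| \lesssim_k \sum_{m\geq 0}\sum_{d\geq 0} F_k\, q^{d}\, 2^{-m}|V| \lesssim_k |V|\sum_{m\geq 0}2^{-m}\sum_{d\geq 0}q^d \lesssim_k |V|.
\]

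The main obstacle I anticipate is making the counting in the doubly-indexed sum rigorous: one must verify that for fixed $m$ and $d$, all characteristic intervals $J_n$ with $|J_n|\in(2^{-m-1}|V|,2^{-m}|V|]$ and exactly $d$ grid points of $\mathcal T_n$ between $J_n$ and $\alpha$ are confined to a single interval of length comparable to $2^{-m}|V|$ (so that Lemma \ref{lem:jinterval} applies and caps their number by $F_k$). This requires relating $d_n(\alpha)$ — which counts points of the $n$-th grid, a quantity that varies with $n$ — to an honest geometric distance; the tool for this is precisely Lemma \ref{lem:geom}-type geometric decay, or more directly the observation that between $\alpha$ and $J_n$ there are $d_n(\alpha)$ grid points, and since $J_n$ is (up to constants) the largest grid interval in a B-spline support of roughly minimal length near the new knot, the $d$ intervals separating $J_n$ from $\alpha$ cannot all be much longer than $|J_n|$ — wait, that last claim needs care and is the delicate point. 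I would instead argue: if $d_n(\alpha)=d$ then $\dist(\alpha,J_n)\geq$ (sum of lengths of the $d$ intervening grid intervals), and combining with the weaker but always-valid bound $q^{d_n(\alpha)}\leq q^{d_n(\alpha)}$ and the disjointness of the $J_n$'s within a generation handled by Lemma \ref{lem:jinterval}, the series still telescopes. The precise bookkeeping for this is exactly the kind of argument carried out in \cite{GevKam2004} and \cite{GevorkyanKamont2005}, and I would follow that template.
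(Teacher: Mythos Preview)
Your approach is exactly the one the paper indicates: apply the $L^1$ decay estimate \eqref{eq:phiplinks} (with $p=1$) to reduce to bounding $\sum_{J_n\subset V} q^{d_n(\alpha)}|J_n|$, then control this sum via the combinatorics of characteristic intervals encoded in Lemma~\ref{lem:jinterval}. The paper itself gives no details and refers to \cite{GevKam2004}, so in spirit you are on the same track.

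However, the specific counting you display is based on a false claim. You assert that for fixed $(m,d)$ the intervals $J_n$ with $|J_n|\sim 2^{-m}|V|$ and $d_n(\alpha)=d$ all lie in a single interval of length $\lesssim 2^{-m}|V|$, and hence number at most $F_k$. This is not true: since $d_n(\alpha)$ counts points of the \emph{$n$-dependent} grid $\mathcal T_n$, one can have $J_{n_1}$ far from $\alpha$ at an early stage $n_1$ (few grid points yet inserted to its left) and $J_{n_2}$ close to $\alpha$ at a much later stage $n_2$, both with the same $d$ and the same size, yet sitting in entirely different parts of $V$. So the double sum $\sum_{m}\sum_d F_k\, q^d\, 2^{-m}|V|$ is not justified as written, and you correctly flag this yourself as ``the delicate point''.

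The fix used in \cite{GevKam2004} does not pair $m$ with $d$. One workable route: fix $d$ and observe that if $p_j^{(n)}$ denotes the $j$-th grid point of $\mathcal T_n$ to the right of $\alpha$, then $d_n(\alpha)=d$ forces $J_n=[p_d^{(n)},p_{d+1}^{(n)}]$, and refinement of grids gives $p_j^{(n')}\leq p_j^{(n)}$ for $n'>n$. Hence for $n<n'$ in $\{d_n(\alpha)=d\}$ one has $\sup J_{n'}\leq \sup J_n$ and $\inf J_{n'}\leq \inf J_n$; combined with nesting of characteristic intervals, any two such $J_n$, $J_{n'}$ are either nested or $J_{n'}$ lies entirely to the left of $J_{n}$. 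This ordered structure, together with Lemma~\ref{lem:jinterval}, lets one show $\sum_{d_n(\alpha)=d,\,J_n\subset V}|J_n|\lesssim_k |V|$ for each fixed $d$, after which summation of $q^d$ finishes the job. Your dyadic-in-size layer $G_m$ is then unnecessary.
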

Once we know the  estimates for orthonormal spline functions as in Lemma \ref{lem:lporthspline}
 and the basic combinatorial result for their characteristic intervals, i.e. Lemma \ref{lem:jinterval}, this result follows by the same line of arguments that was used in the proof of Lemma 4.6 in \cite{GevKam2004}, so we skip its proof.

Instead of Lemma 3.4 of \cite{GevorkyanKamont2005}, we will {use} the following:

\begin{lem}\label{lem:sumJn}
Let $(t_n)_{n=0}^\infty$ be a $k$-admissible knot sequence that satisfies the $(k-1)$-regularity condition and let $\Delta=D_{m,i}^{(k-1)}$ for some indices $m$ and $i$. For $\ell\geq 0$, let
\begin{align*}
N(\Delta)&:=\{n:\ \card (\Delta\cap\mathcal T_n)=k,\ J_n\subset\Delta\}, \\
M(\Delta,\ell)&:= \{n: d_n(\Delta)=\ell, \ \card(\Delta\cap \mathcal{T}_n)\geq k,\ |J_n\cap\Delta|=0 \},
\end{align*}
where in both definitions we count the points in $\Delta\cap \mathcal T_n$ including multiplicities.
Then,
\begin{equation}\label{eq:Jnsum}
\frac{1}{|\Delta|} \sum_{n\in N(\Delta)} |J_n|\lesssim_{k} 1\qquad\text{and}\qquad \sum_{n\in M(\Delta,\ell)} \frac{|J_n|}{\dist(J_n,\Delta)+|\Delta|}\lesssim_{k,\gamma} (\ell+1)^2.
\end{equation}
\end{lem}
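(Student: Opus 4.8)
The strategy is to exploit the $(k-1)$-regularity of $(t_n)$ together with the nesting and geometric-decay properties of characteristic intervals. The key observation is that a characteristic interval $J_n$ is always one of the $k$ grid intervals composing a B-spline support $J^{(0)}=[\tau_{j^{(0)}},\tau_{j^{(0)}+k}]$ that intersects the newly inserted point $\tau_{i_0}$ and is of approximately minimal length among such supports. Since $(t_n)$ is $(k-1)$-regular, the interval $J^{(0)}$ --- which is a union of a bounded number ($k/(k-1)$-many, roughly two) of $(k-1)$-supports $D_{n,j}^{(k-1)}$ --- satisfies $|J^{(0)}|\sim_{k,\gamma}|J_n|$ and $|J^{(0)}|\sim_{k,\gamma}|D_{n,j}^{(k-1)}|$ for the relevant indices $j$, and moreover $J^{(0)}$ can be sandwiched between two $(k-1)$-supports of comparable size. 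Thus, up to constants depending on $k$ and $\gamma$, controlling characteristic intervals is the same as controlling a bounded-overlap family of $(k-1)$-supports, and Lemma \ref{lem:geom} applies to the latter.

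\emph{First estimate.} For $n\in N(\Delta)$ we have $J_n\subset\Delta$ and $\card(\Delta\cap\mathcal T_n)=k$; since $J^{(0)}\supset J_n$ contains $k+1$ grid points of $\mathcal T_n$ while $\Delta=D_{m,i}^{(k-1)}$, the nesting structure forces $J^{(0)}$, hence $\Delta$, to be one of the (boundedly many) $(k-1)$-supports in a strictly decreasing chain terminating at a support of size $\sim_{k,\gamma}|J_n|$. I would group the indices $n\in N(\Delta)$ according to the $(k-1)$-support $D=D_{n,j(n)}^{(k-1)}\subset\Delta$ that sits inside $J^{(0)}_n$: by Lemma \ref{lem:jinterval} (applied with $[x,y]$ a suitable fixed interval comparable to such a $D$, using $|J_n|\sim_{k,\gamma}|D|$), only boundedly many $n$ can share the same $D$. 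Summing $|J_n|\sim_{k,\gamma}|D|$ over all $(k-1)$-supports $D\subset\Delta$ with bounded overlap gives a total $\lesssim_{k,\gamma}|\Delta|$, since the $D$'s of a fixed generation tile $\Delta$ and distinct generations decay geometrically by Lemma \ref{lem:geom}. This yields $\sum_{n\in N(\Delta)}|J_n|\lesssim_{k,\gamma}|\Delta|$.

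\emph{Second estimate.} For $n\in M(\Delta,\ell)$, the interval $J_n$ is disjoint from $\Delta$ but $\card(\Delta\cap\mathcal T_n)\geq k$ and exactly $\ell$ points of $\mathcal T_n$ separate $J_n$ from $\Delta$. The idea is to fix the $\ell$ grid points separating $J_n$ from $\Delta$ and then estimate, for each such configuration, $|J_n|/(\dist(J_n,\Delta)+|\Delta|)$. Writing $I$ for the interval between $\Delta$ and $J_n$ (containing those $\ell$ points), one has $\dist(J_n,\Delta)=|I|$ and $|J_n|\lesssim_{k,\gamma}|D_{n,j}^{(k-1)}|$ for the last $(k-1)$-support before $J_n$; by $(k-1)$-regularity and the fact that $\Delta$ together with the $\ell$ intervening $(k-1)$-supports are all comparable in a way that grows at most geometrically as one moves from $J_n$ toward $\Delta$, one gets that for each fixed "distance pattern" the sum over admissible $n$ is $\lesssim_{k,\gamma}1$, while the number of relevant patterns at combinatorial distance $\ell$ is $\lesssim(\ell+1)$, and a further factor $(\ell+1)$ comes from summing the geometrically-controlled ratios $|J_n|/(|I|+|\Delta|)$ against the $\ell$ possible "sizes" of $J_n$ relative to $\Delta$. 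Combining with Lemma \ref{lem:geom} to sum the deeper generations gives the bound $(\ell+1)^2$.

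\emph{Main obstacle.} The routine part is the geometric summation via Lemma \ref{lem:geom}; the delicate point is the bookkeeping that converts statements about characteristic intervals $J_n$ into statements about $(k-1)$-supports $D_{n,j}^{(k-1)}$ in a way that is uniform in $n$ — in particular establishing $|J_n|\sim_{k,\gamma}|D_{n,j}^{(k-1)}|$ and the sandwiching of $J^{(0)}_n$ between comparable $(k-1)$-supports, and then correctly counting how many $n$ can give rise to the same (or comparably placed) $(k-1)$-support, where Lemma \ref{lem:jinterval} is the essential input. Getting the exponent in $(\ell+1)^2$ exactly right — as opposed to some larger power — is where the argument must be carried out carefully: one factor $(\ell+1)$ is the number of grid positions at combinatorial distance $\ell$, and the other $(\ell+1)$ accounts for the range of scales of $J_n$ compatible with a fixed $\Delta$ and fixed $\ell$, after the geometric decay has been summed out.
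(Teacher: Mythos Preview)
Your plan for the first inequality is substantially more complicated than what is needed and, as written, would only yield a bound $\lesssim_{k,\gamma}$ rather than the claimed $\lesssim_k$. The point you are missing is elementary: if $n\in N(\Delta)$ then $\card(\Delta\cap\mathcal T_n)=k$, and since $\Delta=D_{m,i}^{(k-1)}$ already contains exactly the $k$ grid points $\tau_{m,i},\dots,\tau_{m,i+k-1}$ of $\mathcal T_m$, the $k$ grid points of $\mathcal T_n$ lying in $\Delta$ must be \emph{these same} $k$ points. Hence $J_n$, being a grid interval of $\mathcal T_n$ contained in $\Delta$, is one of the $k-1$ \emph{fixed} intervals $D_{m,i}^{(1)},\dots,D_{m,i+k-2}^{(1)}$. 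Lemma~\ref{lem:jinterval} then says each of these occurs as a characteristic interval at most $F_k$ times, so the sum has at most $(k-1)F_k$ terms, each bounded by $|\Delta|$. No $(k-1)$-regularity, no chains of supports, no geometric decay are needed. Your attempt to translate $J_n$ into a $(k-1)$-support and sum over ``generations'' is not wrong in spirit, but it introduces the parameter $\gamma$ unnecessarily and the sentence ``the nesting structure forces $J^{(0)}$, hence $\Delta$, to be one of the (boundedly many) $(k-1)$-supports in a strictly decreasing chain'' is confused --- $\Delta$ is fixed, and $J^{(0)}$ has $k+1$ grid points so cannot lie inside $\Delta$.

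For the second inequality your sketch is too vague to be a proof, and several of the heuristics are off. You propose to ``fix the $\ell$ grid points separating $J_n$ from $\Delta$'', but these points change with $n$, so this is not a decomposition of $M(\Delta,\ell)$. You also speak of ``$\Delta$ together with the $\ell$ intervening $(k-1)$-supports'', but there are roughly $\ell/(k-1)$ intervening $(k-1)$-supports (not $\ell$), and $\Delta$ itself is \emph{not} a $(k-1)$-support in $\mathcal T_n$ once $\card(\Delta\cap\mathcal T_n)\geq k$. The paper's mechanism is different and concrete: one introduces the half-intervals $J_n^{1/2}$ (the right halves of $J_n$), uses Lemma~\ref{lem:jinterval} to see that each point lies in at most $F_k$ of them, and for $\ell>k$ decomposes $M(\Delta,\ell)$ according to the strictly decreasing sequence $(L_j)$ of $(k-1)$-supports anchored at $\sup\Delta$. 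For each $j$ the contribution is bounded by an integral $\int_{\gamma^{-k}|L_j|+|\Delta|}^{C_k\gamma^\ell|L_j|+|\Delta|}s^{-1}\,ds$; Lemma~\ref{lem:geom} shows these integration ranges can overlap for at most $C_{k,\gamma}\ell$ indices $j$, which gives one factor of $\ell$, and evaluating the resulting integral $\int_{|\Delta|}^{(1+C_k\gamma^\ell)|\Delta|}s^{-1}\,ds$ gives the second factor of $\ell$. Your explanation of the two factors (``number of grid positions at combinatorial distance $\ell$'' and ``range of scales of $J_n$'') does not match this and is not obviously salvageable into a rigorous count.
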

\begin{proof}
For every $n\in N(\Delta)$, there are only the $k-1$ possibilities $D_{m,i}^{(1)},$ $\dots,$  $D_{m,i+k-2}^{(1)}$ for $J_n$ and by Lemma \ref{lem:jinterval}, each interval $D_{m,j}^{(1)}$, $j=i,\dots,i+k-2$ occurs at most $F_k$ times as a characteristic interval. This implies the first inequality in \eqref{eq:Jnsum}.

We now prove the second inequality in \eqref{eq:Jnsum}. To begin with, assume that each $J_n$, $n\in M(\Delta,\ell)$ lies to the right of $\Delta$, since the other case is covered by similar methods. The argument is split in two parts depending on the value of the parameter $\ell$, beginning with $\ell\leq k$. In that case, for $n\in M(\Delta,\ell)$, let $J_n^{1/2}$ be the unique interval determined by the conditions
\[
\sup J_n^{1/2} = \sup J_n,\qquad |J_n^{1/2}|=|J_n|/2.
\]
Since $d_n(\Delta)=\ell$ is constant, we group the occurring intervals $J_n$ into packets, where all intervals in one packet have the same left endpoint and maximal intervals from different packets are disjoint (up to possibly one point). By Lemma \ref{lem:jinterval}, each point $t\in [0,1]$ belongs to at most $F_k$ intervals $J_n^{1/2}$. The $(k-1)$-regularity and the fact that $\ell\leq k$ now imply $|J_n|\lesssim_{k,\gamma} |\Delta|$ and $\dist(\Delta,J_n)\lesssim_{k,\gamma} |\Delta|$ for $n\in M(\Delta,\ell)$ and thus, every interval $J_n$ for $n\in M(\Delta,\ell)$ is a subset of a {fixed} interval whose length is comparable to $|\Delta|$.  So, putting these things together,
\begin{align*}
\sum_{n\in M(\Delta,\ell)} \frac{|J_n|}{\dist(J_n,\Delta)+|\Delta|}
&\leq \frac{1}{|\Delta|}\sum_{n\in M(\Delta,\ell)} |J_n| = \frac{2}{|\Delta|}\sum_{n\in M(\Delta,\ell)}\int_{J_n^{1/2}} \dif x \lesssim_{k,\gamma} 1,
\end{align*}
which completes the case of $\ell\leq k$.

Next, assume $\ell\geq k+1$ and define $(L_j)_{j=1}^\infty$ as the strictly decreasing sequence of all sets $L$ that satisfy
\[
L= D_{n,i}^{(k-1)}\qquad\text{and}\qquad \sup L=\sup \Delta
\]
for some index $n$ and $i$. Moreover, set
\[
M_j(\Delta,\ell):=\{n\in M(\Delta,\ell): \card(L_j\cap\mathcal{T}_n{)}=k\},
\]
i.e., $L_j$ is a union of $k-1$ grid point intervals in the grid $\mathcal{T}_n$.
Then, since $|\Delta|+\dist(J_n,\Delta)\gtrsim_\gamma |\Delta|+\dist(t,\Delta)$ for $t\in J_n^{1/2}$ by $(k-1)$-regularity,
\begin{align*}
\sum_{n\in M_j(\Delta,\ell)} \frac{|J_n|}{\dist(J_n,\Delta)+|\Delta|} &\lesssim_{k,\gamma} \sum_{n\in M_j(\Delta,\ell)}\int_{J_n^{1/2}}\frac{1}{\dist(t,\Delta)+|\Delta|}\dif t.
\end{align*}
If $n\in M_j(\Delta,\ell)$ we get, again due to $(k-1)$-regularity,
\begin{align*}
\inf J_n^{1/2}\geq \inf J_n\geq \gamma^{-k} |L_j|+\sup\Delta,
\end{align*}
and
\[
\sup J_n^{1/2}\leq \inf J_n+|J_n|\leq C_k\gamma^\ell |L_j|+\sup\Delta
\]
for some constant $C_k$ only depending on $k$.
Combining this with Lemma \ref{lem:jinterval}, which implies that each point $t$ belongs to at most $F_k$ intervals $J_n^{1/2}$,
\begin{equation}\label{eq:intLj}
\sum_{n\in M_j(\Delta,\ell)}\int_{J_n^{1/2}}\frac{1}{\dist(t,\Delta)+|\Delta|}\dif t\lesssim \int_{\gamma^{-k}|L_j|+|\Delta|}^{C_k\gamma^\ell|L_j|+|\Delta|}\frac{1}{s}\dif s.
\end{equation}
Next we will show that the above intervals of integration can intersect at most for roughly $\ell$ indices $j$. Let $j_2\geq j_1$, so that $L_{j_1}\supset L_{j_2}$ and write $j_2=j_1+2kr+t$ with $t\leq 2k-1$. Then, by Lemma \ref{lem:geom},
\[
C_k\gamma^\ell |L_{j_2}|\leq C_k\gamma^\ell |L_{j_1+2kr}|\leq C_k\gamma^\ell \eta^r |L_{j_1}|,
\]
where $\eta=\gamma^{k-1}/(1+\gamma^{k-1})<1$. If now $r\geq C_{k,\gamma}\ell$ for some suitable constant $C_{k,\gamma}$ depending only on $k$ and $\gamma$, we have
\[
C_k\gamma^\ell |L_{j_2}|\leq \gamma^{-k}|L_{j_1}|.
\]
Thus, each point $s$ in the integral in \eqref{eq:intLj} for some $j$ belongs to at most $C_{k,\gamma}\ell$ intervals $[\gamma^{-k}|L_j|+|\Delta|,C_k\gamma^\ell|L_j|+|\Delta|]$ where $j$ is varying. So we conclude by summing over $j$
\[
\sum_{n\in M(\Delta,\ell)} \frac{|J_n|}{\dist(J_n,\Delta)+|\Delta|} \leq C_{k,\gamma}\ell \int_{|\Delta|}^{(1+C_k\gamma^\ell)|\Delta|}\frac{1}{s}\dif s \leq C_{k,\gamma} \ell^2.
\]
{This completes the analysis of the case $\ell\geq k+1$ and thus, the proof of the lemma is finished.}
\end{proof}
\section{{Four  conditions on spline series and their relations}}\label{four.cond}
Let $(t_n)$ be a $k$-admissible sequence of knots with the corresponding orthonormal spline system $(f_n)_{n\geq -k+2}$. For a sequence $(a_n)_{n\geq -k+2}$ of coefficients, let
\[
P:=\Big(\sum_{n=-k+2}^\infty a_n^2 f_n^2\Big)^{1/2}\qquad\text{and}\qquad S:=\max_{m\geq -k+2}\Big|\sum_{n=-k+2}^m a_nf_n\Big|.
\]
If $f\in L^1$, we denote by $Pf$ and $Sf$ the functions $P$ and $S$ corresponding to the coefficient sequence $a_n=\langle f,f_n\rangle$, respectively.
Consider the following conditions:
\begin{enumerate}[(A)]
\item $P\in L^1$ \label{en:A},
\item The series $\sum_{n=-k+2}^\infty a_n f_n$ converges unconditionally in $L^1$, \label{en:B}
\item $S\in L^1$, \label{en:C}
\item There exists a function $f\in H^1$ such that $a_n=\langle f,f_n\rangle$. \label{en:D}
\end{enumerate}
We will discuss the relations between those four conditions and we will prove the implications indicated in the subsequent picture, where some results need certain regularity conditions imposed on the point sequence $(t_n)$, which is also indicated in the image.
\begin{center}
\begin{tikzpicture}\label{fig:equivalences}
  \node (A) {$(A)$};
  \node (B) [node distance=5cm, right of=A] {$(B)$};
  \node (C) [node distance=5cm, below of=B] {$(C)$};
  \node (D) [node distance=5cm, below of=A] {$(D)$};
  \draw[->, double] (A.20) to node [above] { $\substack{\text{Proposition \ref{prop:A->B_A->C}}, \\ \sup_{\varepsilon}\|\sum\varepsilon_na_nf_n\|_1\lesssim_k \|P\|_1}$} (B.160);
  \draw[<-, double] (A.340) to node [below] {$\substack{\|P\|_1\lesssim \sup_{\varepsilon}\|\sum\varepsilon_na_nf_n\|_1,\\ \text{Proposition \ref{prop:B->A}}}$} (B.200);
  \draw[->, double] (A) to node [right] {\rotatebox{-45}{$\substack{\text{Proposition \ref{prop:A->B_A->C}}, \\ \|S\|_1\lesssim_k \|P\|_1}$}} (C);
  \draw[->, double] (C) to node [below] {$\substack{k\text{-reg. }\Rightarrow \|f\|_{H^1}\lesssim_{k,\gamma}\|Sf\|_1,\\ \text{Proposition \ref{prop:C->D}}}$} (D);
  \draw[->, double] (D) to node [left]
  {\rotatebox{90}{$\substack{\text{Proposition \ref{prop:D->A}},\\(k-1)\text{-reg. }\Rightarrow \|Pf\|_1\lesssim_{k,\gamma} \|f\|_{H^1}}$}} (A);
\end{tikzpicture}
\end{center}

Let us recall that in case of orthonormal spline systems with dyadic knots, the relations (and equivalences) of these conditions have been studied by several
authors, also in case $p<1$, see e.g. \cite{SjolinStromberg1983a,ChangCiesielski1983,Gevorkyan1989}. For general Franklin systems corresponding to arbitrary sequences of knots, the relations of these conditions were discussed in \cite{GevorkyanKamont2005} (and earlier in \cite{GevorkyanKamont1998}, also for $p<1$, but for a restricted class of {point sequences}). In the sequel,   we follow the approach from \cite{GevorkyanKamont2005} and we adapt it to the case of spline orthonormal systems of order $k$.

\bigskip

We begin with the implication $\eqref{en:B}\Rightarrow\eqref{en:A}$, which is a consequence of Khinchin's inequality:
\begin{prop}[$\eqref{en:B}\Rightarrow\eqref{en:A}$]\label{prop:B->A}
Let $(t_n)$ be a $k$-admissible sequence of knots with the corresponding general orthonormal spline system $(f_n)$ and let $(a_n)$ be a sequence of coefficients. If the series $\sum_{n=-k+2}^\infty a_nf_n$ converges unconditionally in $L^1$, then $P\in L^1$. Moreover,
\[
\|P\|_1\lesssim \sup_{\varepsilon\in \{-1,1\}^{\mathbb{Z}}}\big\|\sum_{n=-k+2}^\infty\varepsilon_n a_n f_n\big\|_1.
\]
\end{prop}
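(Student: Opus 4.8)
The plan is to deduce the square-function bound from unconditional $L^1$-convergence via a randomization argument using Khinchin's inequality. First I would observe that unconditional convergence of $\sum_{n} a_n f_n$ in $L^1$ implies, by the uniform boundedness principle (or directly by the closed graph theorem applied to the sign-change operators), that
\[
C:=\sup_{\varepsilon\in\{-1,1\}^{\mathbb Z}}\Big\|\sum_{n=-k+2}^\infty \varepsilon_n a_n f_n\Big\|_1<\infty,
\]
so it suffices to bound $\|P\|_1$ by $C$. Here one should be a little careful that the supremum is genuinely finite: for each fixed $\varepsilon$ the series $\sum_n \varepsilon_n a_n f_n$ converges in $L^1$ (this is exactly unconditional convergence), and the family of partial-sum-with-signs operators is pointwise bounded on the Banach space of convergent-series-coefficients, hence uniformly bounded.

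Next I would introduce an auxiliary probability space carrying a Rademacher sequence $(\varepsilon_n)$ and apply Khinchin's inequality pointwise in $x\in[0,1]$: for the finite partial sums,
\[
\Big(\sum_{n=-k+2}^{N} a_n^2 f_n(x)^2\Big)^{1/2}\sim \mathbb E_\varepsilon\Big|\sum_{n=-k+2}^{N}\varepsilon_n a_n f_n(x)\Big|.
\]
Integrating in $x$ over $[0,1]$ and using Fubini on the right-hand side gives
\[
\Big\|\Big(\sum_{n=-k+2}^{N} a_n^2 f_n^2\Big)^{1/2}\Big\|_1\sim \mathbb E_\varepsilon\Big\|\sum_{n=-k+2}^{N}\varepsilon_n a_n f_n\Big\|_1\leq C,
\]
since for each realization $\varepsilon$ the partial sum has $L^1$-norm at most $C$ (it is a partial sum of a convergent series whose norm is controlled by $C$; alternatively bound each partial sum directly). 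Thus the truncated square functions $P_N:=\big(\sum_{n\le N} a_n^2 f_n^2\big)^{1/2}$ satisfy $\|P_N\|_1\lesssim C$ uniformly in $N$. Finally, $P_N\uparrow P$ monotonically pointwise, so by the monotone convergence theorem $\|P\|_1=\lim_N\|P_N\|_1\lesssim C$, which gives both $P\in L^1$ and the quantitative estimate $\|P\|_1\lesssim C$.

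The only mildly delicate point — and the place I would spend the most care — is justifying that the $L^1$-norm of every finite signed partial sum $\sum_{n\le N}\varepsilon_n a_n f_n$ is bounded by $C$ (or a fixed multiple of it), because a priori $C$ controls only the norms of the full convergent series $\sum_n \varepsilon_n a_n f_n$. This is resolved by noting that unconditional $L^1$-convergence of $\sum_n a_nf_n$ implies unconditional convergence of all its subseries, so in particular for any $N$ and any signs $\varepsilon$ one has $\big\|\sum_{n\le N}\varepsilon_n a_n f_n\big\|_1=\big\|\sum_n \tilde\varepsilon_n a_n f_n - \sum_n \tilde{\tilde\varepsilon}_n a_n f_n\big\|_1/1$ type manipulations, or more cleanly: the partial sums of an unconditionally convergent series are uniformly bounded, with bound $\lesssim \sup_\varepsilon\|\sum_n\varepsilon_n a_nf_n\|_1$, a standard fact. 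Everything else — Khinchin's inequality applied pointwise, Fubini, monotone convergence — is routine, and no regularity hypothesis on $(t_n)$ is needed, which matches the statement.
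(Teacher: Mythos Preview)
Your argument is correct and is precisely the standard Khinchin-inequality argument the paper has in mind; the paper does not give any details beyond stating that the implication ``is a consequence of Khinchin's inequality.'' Your careful justification that finite signed partial sums are bounded by $\sup_\varepsilon\|\sum_n\varepsilon_n a_nf_n\|_1$ (via the averaging trick $\sum_{n\le N}\varepsilon_n a_nf_n=\tfrac12(\sum_n\alpha_n a_nf_n+\sum_n\beta_n a_nf_n)$ with suitable $\alpha,\beta\in\{-1,1\}^{\mathbb Z}$) is exactly the point one needs, and the rest is routine.
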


\bigskip

Next, we investigate the implications $\eqref{en:A}\Rightarrow\eqref{en:B}$ and $\eqref{en:A}\Rightarrow\eqref{en:C}$. Let us note that once we know the estimates and combinatorial  results of Sections \ref{sec:prel} and \ref{sec:proporth},
the proof is the same as the proof of Proposition 4.3 in \cite{GevorkyanKamont2005}, so we just state the result.

\begin{prop}[$\eqref{en:A}\Rightarrow\eqref{en:B}$ and $\eqref{en:A}\Rightarrow\eqref{en:C}$]\label{prop:A->B_A->C}
Let $(t_n)$ be a $k$-admissible sequence of knots and let $(a_n)$ be a sequence of coefficients such that $P\in L^1$. Then, $S\in L^1$ and $\sum a_n f_n$ converges unconditionally in $L^1$; moreover,
\[
\sup_{\varepsilon\in\{-1,1\}^{\mathbb{Z}}}\big\|\sum_{n=-k+2}^\infty\varepsilon_na_nf_n\big\|\lesssim_k \|P\|_1\qquad\text{and}\qquad
\|S\|_1\lesssim_k \|P\|_1 .
\]
\end{prop}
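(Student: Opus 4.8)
The plan is to follow the Calder\'on--Zygmund scheme of the proof of Proposition~4.3 in \cite{GevorkyanKamont2005}, adapted to splines of order $k$ via the estimates of Sections~\ref{sec:prel} and \ref{sec:proporth}. The first step is a reduction to a single inequality. The square function $P$ is unchanged when $(a_n)$ is replaced by $(\varepsilon_na_n)$ for any $\varepsilon\in\{-1,1\}^{\mathbb Z}$, and $\big|\sum_n\varepsilon_na_nf_n\big|$ together with every partial sum is dominated pointwise by $S^{(\varepsilon)}:=\sup_m\big|\sum_{n\le m}\varepsilon_na_nf_n\big|$; hence both claims follow once we know $\|S\|_1\lesssim_k\|P\|_1$ with a constant depending only on $k$. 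Indeed, applying such a bound to a finitely supported subsequence $(\varepsilon_na_n)_{n\in F}$ gives $\big\|\sum_{n\in F}\varepsilon_na_nf_n\big\|_1\lesssim_k\big\|\big(\sum_{n\in F}a_n^2f_n^2\big)^{1/2}\big\|_1$, which tends to $0$ as $F$ recedes to infinity (by dominated convergence, the majorant being $P\in L^1$); this yields simultaneously unconditional $L^1$-convergence of $\sum a_nf_n$ and the uniform estimate $\sup_\varepsilon\|\sum\varepsilon_na_nf_n\|_1\lesssim_k\|P\|_1$. Truncating to $(a_n)_{-k+2\le n\le N}$, proving the bound with a constant independent of $N$, and letting $N\to\infty$ through Fatou, we may moreover assume $\|S\|_1<\infty$. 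So it remains to prove $\|S\|_1\lesssim_k\|P\|_1$.

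For this I would argue at a fixed level $\lambda>0$. The set $[S>\lambda]$ is open, hence a disjoint union of intervals $(I_j)$, and at the endpoints of each $I_j$ the full partial sums of $\sum a_nf_n$ are $\le\lambda$. On a given $I_j$ I split the series according to the location of the characteristic intervals,
\[
\sum_n a_nf_n=g_j+h_j,\qquad g_j:=\sum_{n:\,J_n\subset I_j}a_nf_n,\qquad h_j:=\sum_{n:\,J_n\not\subset I_j}a_nf_n,
\]
so that $S\le S_{g_j}+S_{h_j}$ on $I_j$, where $S_{g_j},S_{h_j}$ denote the corresponding maximal partial-sum functions. Two elementary facts drive the estimates: from $P\ge|a_nf_n|$ pointwise together with Lemma~\ref{lem:orthsplineJinterval} one gets $|a_n|\,|J_n|^{1/2}\lesssim_k\int_{J_n}P$; and since $J_n$ is a single grid-point interval, $f_n$ is a polynomial of order $k$ on $J_n$, so Proposition~\ref{prop:poly} lets us pass between the $L^p(J_n)$-norm of $a_nf_n$ and its norm over any subinterval of $J_n$ of comparable length.

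The non-local part $h_j$ should contribute $\lesssim_k\lambda$ to $S$ on $I_j$: each of its summands has characteristic interval poking out of $I_j$, so on $I_j$ it lies in the exponential-decay regime of Lemma~\ref{lem:lporthspline}, and combining \eqref{eq:phiplinks}--\eqref{eq:phiprechts}, the bound $|a_n|\,|J_n|^{1/2}\lesssim_k\int_{J_n}P$, the information $S\le\lambda$ on $\partial I_j$, and the summation estimate of Lemma~\ref{lem:JnsubsetV} (with the overlap control of Lemma~\ref{lem:jinterval}) yields a uniform bound $S_{h_j}\lesssim_k\lambda$ on $I_j$. Consequently $[S>2\lambda]\cap I_j\subset[S_{g_j}>c_k\lambda]\cap I_j$ for a suitable $c_k>0$. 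For the local part $g_j$, which is essentially supported in a fixed dilate of $I_j$, I would prove the good-$\lambda$ type bound $|[S_{g_j}>c_k\lambda]\cap[P\le\delta\lambda]\cap I_j|\lesssim_k\delta\,|I_j|$: on $[P\le\delta\lambda]$ the partial square function $\big(\sum_{n:\,J_n\subset I_j}a_n^2f_n^2\big)^{1/2}$ is $\le\delta\lambda$, which through Proposition~\ref{prop:poly} applied on the grid intervals $J_n$ and Lemma~\ref{lem:orthsplineJinterval} forces $|a_n|\lesssim_k\delta\lambda\,|J_n|^{1/2}$ for those indices whose $J_n$ meets $[P\le\delta\lambda]\cap I_j$ in a fixed proportion; stopping the remaining indices, estimating the resulting series in $L^2$ via the orthogonality identity $\big\|\big(\sum a_n^2f_n^2\big)^{1/2}\big\|_2^2=\sum a_n^2$, passing to the Hardy--Littlewood maximal function by Theorem~\ref{thm:maxbound}, and applying Chebyshev gives the claim. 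Summing over $j$ and using $|[S>\lambda]|=\sum_j|I_j|$ produces $|[S>2\lambda]\cap[P\le\delta\lambda]|\lesssim_k\delta\,|[S>\lambda]|$; inserting this into $|[S>2\lambda]|\le|[S>2\lambda]\cap[P\le\delta\lambda]|+|[P>\delta\lambda]|$, integrating in $\lambda$, using $\int_0^\infty|[P>\delta\lambda]|\dif\lambda=\delta^{-1}\|P\|_1$, choosing $\delta$ small enough to absorb the $|[S>\lambda]|$-term (legitimate since $\|S\|_1<\infty$), and removing the truncation gives $\|S\|_1\lesssim_k\|P\|_1$.

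The main obstacle is precisely the absence of a martingale structure: the partial sums $\sum_{n\le m}a_nf_n$ do not localize to a fixed region, so the global maximal function $S$ cannot be compared to the Calder\'on--Zygmund decomposition by soft stopping-time arguments. Reconciling the two requires the quantitative decay of $f_n$ away from its characteristic interval (Lemma~\ref{lem:lporthspline}) together with the combinatorial facts about characteristic intervals (Lemmas~\ref{lem:jinterval} and \ref{lem:JnsubsetV}), which are exactly what prevents a logarithmic loss when the contributions of characteristic intervals of many different scales are summed; this is the reason those results are assembled beforehand. Since every estimate above is uniform in the sign sequence $\varepsilon$, the same argument delivers $\sup_\varepsilon\|\sum\varepsilon_na_nf_n\|_1\lesssim_k\|P\|_1$, and with it condition~\eqref{en:B}.
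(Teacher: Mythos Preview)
Your proposal is correct and takes essentially the same approach as the paper: the paper itself gives no proof beyond the remark that, once the estimates and combinatorics of Sections~\ref{sec:prel} and~\ref{sec:proporth} are in hand, the argument of Proposition~4.3 in \cite{GevorkyanKamont2005} goes through verbatim, and this is precisely the Calder\'on--Zygmund/good-$\lambda$ scheme you outline. Your identification of the key ingredients---the pointwise bound $|a_n|\,|J_n|^{1/2}\lesssim_k\int_{J_n}P$ from Lemma~\ref{lem:orthsplineJinterval}, the exponential decay of Lemma~\ref{lem:lporthspline}, the overlap control of Lemma~\ref{lem:jinterval}, the summation estimate of Lemma~\ref{lem:JnsubsetV}, and Theorem~\ref{thm:maxbound} for the passage to $\mathcal M$---matches exactly what is needed to transplant that proof to order $k$.
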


\bigskip

Next we discuss $\eqref{en:D}\Rightarrow\eqref{en:A}$.

\begin{prop}[$\eqref{en:D}\Rightarrow\eqref{en:A}$]\label{prop:D->A}
Let $(t_n)$ be a $k$-admissible point sequence that satisfies the $(k-1)$-regularity condition with parameter $\gamma$. Then there exists a constant $C_{k,\gamma}$ depending only on $k$ and $\gamma$ such that for each atom $\phi$,
\[
\|P\phi\|_1\leq C_{k,\gamma}.
\]
Consequently, if $f \in H^1$, then
$$
\| Pf \|_1 \leq C_{k,\gamma} \| f \|_{H^1}.
$$
\end{prop}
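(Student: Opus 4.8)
The plan is to reduce the estimate to a single atom $\phi$ supported on an interval $\Gamma$ with $\|\phi\|_\infty\le|\Gamma|^{-1}$ and $\int\phi=0$ (the case $\phi\equiv1$ being trivial since then only finitely many coefficients $a_n=\langle\phi,f_n\rangle$ are nonzero, namely $n=-k+2,\dots,1$, and $P\phi$ is a fixed polynomial combination), and then to split $[0,1]$ into the doubled interval $\Gamma^*$ (say the interval concentric with $\Gamma$ of length $2|\Gamma|$, truncated to $[0,1]$) and its complement. On $\Gamma^*$ one uses Cauchy--Schwarz: $\int_{\Gamma^*}P\phi\le|\Gamma^*|^{1/2}\|P\phi\|_2=|\Gamma^*|^{1/2}\|\phi\|_2\le|\Gamma^*|^{1/2}|\Gamma|^{-1/2}\lesssim1$, using that $(f_n)$ is orthonormal so $\|P\phi\|_2^2=\sum_n\langle\phi,f_n\rangle^2=\|\phi\|_2^2$, together with $\|\phi\|_2\le\|\phi\|_\infty|\Gamma|^{1/2}\le|\Gamma|^{-1/2}$. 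This part needs no regularity and no characteristic intervals.

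The substantial part is the estimate of $\int_{[0,1]\setminus\Gamma^*}P\phi$. Here one uses the cancellation $\int\phi=0$ to write $a_n=\langle\phi,f_n\rangle=\langle\phi,f_n-c_n\rangle$ for any constant $c_n$; choosing $c_n$ to be the value (or an average) of $f_n$ near the center of $\Gamma$ and using the exponential decay of $f_n$ away from its characteristic interval $J_n$ (Lemma~\ref{lem:lporthspline}, inequalities \eqref{eq:phiplinks}--\eqref{eq:phiprechts}) one gets, for $t$ outside $\Gamma^*$, a pointwise bound of the schematic form
\[
|a_n f_n(t)|\lesssim_k |\Gamma|\,\|\phi\|_\infty\, \frac{|J_n|^{1/2}q^{d_n(\Gamma)}}{|J_n|+\dist(\Gamma,J_n)}\cdot\frac{|J_n|^{1/2}q^{d_n(t)}}{|J_n|+\dist(t,J_n)}.
\]
One then organizes the sum over $n$ according to the position of $J_n$ relative to $\Gamma$: the dominant contribution comes from those $n$ with $J_n$ close to (or inside) $\Gamma$. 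Summing $|a_nf_n(t)|^2$, using that $\sum_n(\cdots)^2\le(\sum_n\cdots)^2$ would be wasteful, so instead one estimates $P\phi(t)$ by grouping the indices $n$ by the value of $d_n(\Gamma)=\ell$ and by which ``generation'' interval $\Delta=D^{(k-1)}_{m,i}$ plays the role of a dyadic-type block around $\Gamma$; this is exactly what Lemma~\ref{lem:sumJn} is designed to control. Concretely, for each such block one bounds the relevant partial square-function sum by $\bigl(\sum_{n}|a_nf_n(t)|\bigr)$-type quantities whose $L^1(\Gamma^c)$ norm is, after integrating in $t$ and using Lemma~\ref{lem:JnsubsetV} for the $\int_{V^c}|f_n|$ integrals and Lemma~\ref{lem:sumJn} for the weighted sums $\sum|J_n|/(\dist(J_n,\Delta)+|\Delta|)\lesssim(\ell+1)^2$, summable against the geometric factor $q^{\ell}$. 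Putting the pieces together yields $\int_{\Gamma^c}P\phi\lesssim_{k,\gamma}1$, hence $\|P\phi\|_1\le C_{k,\gamma}$. The final assertion follows by subadditivity of $f\mapsto\|Pf\|_1$ (which holds because $P$ satisfies a triangle inequality in $\ell^2$) applied to an atomic decomposition $f=\sum c_na_n$ with $\sum|c_n|\le2\|f\|_{H^1}$, giving $\|Pf\|_1\le\sum|c_n|\,\|Pa_n\|_1\le C_{k,\gamma}\|f\|_{H^1}$.

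The main obstacle is the bookkeeping in the off-diagonal sum: one must choose the constants $c_n$ and the decomposition of the index set $\{n:J_n\not\subset\Gamma^*\text{ or }J_n\subset\Gamma^*\}$ so that the decay factors $q^{d_n(\Gamma)}$ and the geometric decay of the block lengths $|\Delta|$ (Lemma~\ref{lem:geom}) exactly absorb the polynomial losses $(\ell+1)^2$ coming from Lemma~\ref{lem:sumJn}, and so that the square-function structure is respected rather than crudely bounded by an $\ell^1$ sum. This is where $(k-1)$-regularity is essential — it is the hypothesis under which Lemma~\ref{lem:sumJn} is available and under which the distances $\dist(t,J_n)$ and $\dist(\Gamma,J_n)$ are comparable for $t\in J_n^{1/2}$ — whereas all other ingredients (orthonormality, Lemma~\ref{lem:lporthspline}, Lemma~\ref{lem:JnsubsetV}) hold for arbitrary $k$-admissible sequences. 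I expect the argument to parallel closely the proof of the corresponding implication in \cite{GevorkyanKamont2005}, with Lemma~\ref{lem:sumJn} substituting for its Lemma~3.4.
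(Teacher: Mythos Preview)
Your overall plan is right in spirit and you have identified the correct auxiliary lemmas, but the proposal has a structural gap: the split must be made in the index $n$ first, not purely in $t$. The paper sets $n_\Gamma:=\max\{n:\card(\mathcal T_n\cap\Gamma)\le k-1\}$ and decomposes $P\phi$ as $P_1\phi$ (over $n\le n_\Gamma$) plus $P_2\phi$ (over $n>n_\Gamma$). Your cancellation device $a_n=\langle\phi,f_n-c_n\rangle$ is only effective in the first regime, because subtracting a constant gains a factor $|\Gamma|\sup_\Gamma|f_n'|$, and this derivative is controllable on $\Gamma$ precisely when $\Gamma$ meets at most $k-1$ grid points of $\mathcal T_n$. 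Moreover, the schematic bound you wrote, $|a_n|\lesssim\frac{|J_n|^{1/2}q^{d_n(\Gamma)}}{|J_n|+\dist(\Gamma,J_n)}$, is just the $L^\infty$-estimate from Lemma~\ref{lem:lporthspline} and does \emph{not} reflect the gain from cancellation. What the paper actually obtains (using the B-spline derivative formula~\eqref{eq:splinederivative} and $(k-1)$-regularity---this is where that hypothesis enters) is
\[
|a_n|\,\|f_n\|_1\lesssim_{k,\gamma}\frac{|\Gamma|}{|\Gamma_n|}\cdot\frac{|J_n|\,q^{d_n(\Gamma_n)}}{\dist(J_n,\Gamma_n)+|\Gamma_n|},
\]
where $\Gamma_n$ is the $(k-1)$-block $D^{(k-1)}_{n,j}$ at the edge of $\Gamma$. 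The factor $|\Gamma|/|\Gamma_n|\le 1$ and the geometric decay of the distinct values of $|\Gamma_n|$ (Lemma~\ref{lem:geom}) are what make the sum over generations converge after Lemma~\ref{lem:sumJn} handles the sum at fixed $\Gamma_n$ and fixed $\ell=d_n(\Gamma_n)$.

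For $P_2\phi$ the paper uses a \emph{grid-adapted} neighbourhood $\widetilde V\supset\Gamma$ (containing $k$ points of $\mathcal T_{n_\Gamma+1}$ on each side, so $|\widetilde V|\sim_{k,\gamma}|\Gamma|$), not a geometric doubling; your Cauchy--Schwarz argument then works on $\widetilde V$. Off $\widetilde V$ one does \emph{not} use cancellation for indices with $J_n\subset\widetilde V$: instead one takes the crude bound $|a_n|\le\|f_n\|_1/|\Gamma|\lesssim|J_n|^{1/2}/|\Gamma|$ and invokes Lemma~\ref{lem:JnsubsetV}. Your cancellation scheme would give only $|a_n|\lesssim|J_n|^{-1/2}$ here, too weak by a factor $|\Gamma|/|J_n|$. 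The remaining indices ($J_n$ outside $\widetilde V$) are handled as you describe, via the decay of $f_n$ on $\Gamma$ and Lemma~\ref{lem:sumJn}. Your closing remarks on the $(\ell+1)^2$ loss being absorbed by $q^\ell$ and on the subadditivity of $\|P(\cdot)\|_1$ for the passage to general $f\in H^1$ are correct.
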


Before we proceed with the proof, let us remark that essentially the same arguments give a direct proof of $\eqref{en:D}\Rightarrow\eqref{en:C}$,
under the same assumption of $(k-1)$-regularity of the sequence of points $(t_n)$, and moreover
$$
\| Sf \|_1 \leq C_{k,\gamma} \| f \|_{H^1}.
$$
We do not present it here, since we have the  implications $\eqref{en:D}\Rightarrow\eqref{en:A}$ under the assumption of $(k-1)$-regularity and
$\eqref{en:A}\Rightarrow\eqref{en:C}$ under the assumption of $k$-admissibility only.
Note that Proposition \ref{prop:not_D->A} in Section \ref{main.proof} shows that -- without the assumption of $(k-1)$-regularity of the point sequence --
the implications  $\eqref{en:D}\Rightarrow\eqref{en:A}$ and $\eqref{en:D}\Rightarrow\eqref{en:C}$ need not be true.

\begin{proof}[Proof of Proposition \ref{prop:D->A}]
Let $\phi$ be an atom with $\int_0^1\phi(u)\dif u=0$ and let $\Gamma=[\alpha,\beta]$ be an interval such that $\supp\phi \subset \Gamma$ and $\sup|\phi|\leq |\Gamma|^{-1}$. Define $n_\Gamma:=\max\{n:\card(\mathcal{T}_n\cap \Gamma)\leq k-1\}$, where in the maximum, we also count multiplicities of knots.
It will be shown that
\[
\|P_1\phi\|_1,\ \|P_2\phi\|_1 \lesssim_{\gamma,k} 1,
\]
where
\[
P_1\phi=\Big(\sum_{n\leq n_\Gamma} a_n^2 f_n^2\Big)^{1/2}\qquad\text{and}\qquad P_2\phi=\Big(\sum_{n> n_\Gamma} a_n^2 f_n^2\Big)^{1/2}.
\]
First, we consider $P_1$ and prove the stronger inequality
\[
\sum_{n\leq n_\Gamma} |a_n|\|f_n\|_1\lesssim_{k,\gamma}1,
\]
where $a_n=\langle\phi,f_n\rangle$. For each parameter $n\leq n_\Gamma$, we define $\Gamma_{n,\alpha}$ as the unique closed interval $D_{n,j}^{(k-1)}$ with minimal index $j$ such that
\[
\alpha\leq \min D_{n,j+1}^{(k-1)}.
\]
We note that $\Gamma_{n,\alpha}$ satisfies
\[
\Gamma_{n_1,\alpha} \supseteq \Gamma_{n_2,\alpha}\qquad\text{for } n_1\leq n_2,
\]
and, by $(k-1)$-regularity,
\[
|\Gamma_{n,\alpha}|\gtrsim_{\gamma,k} |\Gamma|.
\]
Let $g_n=\sum_{j=1}^{n+k-1} w_jN_{n,j}^{(k)}$ be the unnormalized orthogonal spline function as in \eqref{eq:defg} and the coefficients $(w_j)$ as in \eqref{eq:defwj}. For $\xi\in \Gamma$, we have (cf. \eqref{eq:splinederivative})
\begin{align}\label{eq:sumprime}
|g_n'(\xi)|\lesssim_k \sum_j \frac{|w_j|+|w_{j-1}|}{|D_{n,j}^{(k-1)}|},
\end{align}
where we sum only over those indices $j$ such that $\Gamma\cap\supp N_{n,j}^{(k-1)}=\Gamma\cap D_{n,j}^{(k-1)}\neq\emptyset$. By $(k-1)$-regularity, all lengths $|D_{n,j}^{(k-1)}|$ in this summation range are comparable to $|\Gamma_{n,\alpha}|$. Moreover, by \eqref{eq:wj},
\[
|w_j|\lesssim_k \frac{q^{d_n(\tau_{n,j})}}{|J_n|+\dist(D_{n,j}^{(k)},J_n)+|D_{n,j}^{(k)}|}.
\]
Again by $(k-1)$-regularity, for $j$ in the summation range of the sum \eqref{eq:sumprime},
\begin{align*}
|D_{n,j}^{(k-1)}|&\gtrsim_{k,\gamma} |\Gamma_{n,\alpha}|, \\
\dist(D_{n,j}^{(k)}, J_n)+|D_{n,j}^{(k)}|&\gtrsim_{k,\gamma} \dist(J_n,\Gamma_{n,\alpha})+|\Gamma_{n,\alpha}|.
\end{align*}
Therefore, combining the above inequalities, we estimate the right hand side in \eqref{eq:sumprime} further and get, with the notation $\Gamma_n:=\Gamma_{n,\alpha}$,
\begin{equation}\label{eq:primebound}
|g_n'(\xi)|\lesssim_{k,\gamma} \frac{1}{|\Gamma_{n}|}\frac{q^{d_n(\Gamma_{n})}}{|J_n|+\dist(J_n,\Gamma_{n})+|\Gamma_{n}|}.
\end{equation}
As a consequence, for an arbitrary point $\tau\in\Gamma$,
\begin{align*}
|a_n|&=\Big|\int_{\Gamma} \phi(t)[f_n(t)-f_n(\tau)]\dif t \Big| \leq \int_{\Gamma}\frac{1}{|\Gamma|} \sup_{\xi\in\Gamma} |f_n'(\xi)||t-\tau|\dif t \\
&\lesssim_k |\Gamma||J_n|^{1/2}\sup_{\xi\in\Gamma}|g_n'(\xi)|\lesssim_{k,\gamma} \frac{|\Gamma|}{|\Gamma_{n}|} \frac{|J_n|^{1/2}q^{d_n(\Gamma_{n})}}{|J_n|+\dist(J_n,\Gamma_{n})+|\Gamma_{n}|}.
\end{align*}
Let $\Delta_1\supset \cdots\supset \Delta_s$ be the collection of all different intervals appearing as $\Gamma_n$ for $n\leq n_\Gamma$. By Lemma \ref{lem:geom}, we have some geometric decay in the measure of $\Delta_i$. Now fix $\Delta_i$ and $\ell\geq 0$ and consider indices $n\leq n_\Gamma$ such that $\Gamma_n=\Delta_i$ and $d_n(\Gamma_n)=\ell$. By the latter display and Lemma \ref{lem:orthsplineJinterval},
\[
|a_n|\|f_n\|_1 \lesssim_{k,\gamma} \frac{|\Gamma|}{|\Delta_i|} \frac{|J_n|q^{\ell}}{|J_n|+\dist(J_n,\Delta_i)+|\Delta_i|},
\]
and thus, Lemma \ref{lem:sumJn} implies
\[
\sum_{n:\Gamma_{n}=\Delta_i,\, d_n(\Gamma_n)=\ell} |a_n|\|f_n\|_1\lesssim_{k,\gamma} (\ell+1)^2 q^\ell \frac{|\Gamma|}{|\Delta_i|}.
\]
Now, summing over $\ell$ and then over $i$ (recall that $|\Delta_i|$ decays like a geometric progression by Lemma \ref{lem:geom} and $|\Delta_i|\gtrsim_{k,\gamma} |\Gamma|$ since $n\leq n_\Gamma$) yields
\[
\sum_{n\leq n_\Gamma} |a_n|\|f_n\|_1 \lesssim_{k,\gamma} 1.
\]
This implies the desired inequality $\|P_1\phi\|_1 \lesssim_{k,\gamma} 1$ for the first part of $P\phi$.

Next, we look at $P_2\phi$ and define the set $V$ as the smallest interval that has grid points in $\mathcal T_{n_\Gamma +1}$ as endpoints and which contains $\Gamma$. Moreover, $\widetilde{V}$ is defined to be the smallest interval with gridpoints in $\mathcal T_{n_\Gamma +1}$ as endpoints and such that $\widetilde{V}$ contains $k$ grid points in $\mathcal T_{n_\Gamma +1}$ to the left of $\Gamma$ and as well $k$ grid points in $\mathcal T_{n_\Gamma +1}$ to the right of $\Gamma$.
We observe that due to $(k-1)$-regularity and the fact that $\Gamma$ contains at least $k$ gridpoints from $\mathcal T_{n_\Gamma+1}$,
\begin{equation}\label{eq:simVGamma}
\begin{aligned}
|V|&\sim_{k,\gamma} |\widetilde{V}|\sim_{k,\gamma} |\Gamma|,\\
 |(\widetilde{V}\setminus V)\cap[0,\inf\Gamma]|&\sim_{k,\gamma} |(\widetilde{V}\setminus V)\cap[\sup\Gamma,1]|\sim_{k,\gamma} |\widetilde{V}|.
\end{aligned}
\end{equation}

First, we consider the integral of $P_2\phi$ over the set $\widetilde{V}$ and obtain by the Cauchy-Schwarz inequality
\[
\int_{\widetilde{V}} P_2\phi(t)\dif t\leq \|\charfun_{\widetilde{V}}\|_2 \|\phi\|_2\leq \frac{|\widetilde{V}|^{1/2}}{|\Gamma|^{1/2}} \lesssim_{k,\gamma} 1.
\]

It remains to estimate $\int_{\widetilde{V}^c} P_2\phi(t)\dif t $.
Since for $n>n_\Gamma$, the endpoints of $\widetilde{V}$ are grid points in $\mathcal{T}_n$, for $J_n$ there are only the possibilities $J_n\subset\widetilde{V}$, $J_n$ is to the right or $J_n$ is to the left of $\widetilde{V}$. We begin with considering $J_n\subset\widetilde{V}$, in which case
\[
|a_n|=\Big|\int_{\Gamma} \phi(t)f_n(t)\dif t\Big| \leq \frac{\|f_n\|_1}{|\Gamma|}\lesssim_k \frac{|J_n|^{1/2}}{|\Gamma|},
\]
and therefore, by Lemma \ref{lem:JnsubsetV} and \eqref{eq:simVGamma},
\begin{align*}
\sum_{n:J_n\subset\widetilde{V},n>n_\Gamma} |a_n|\int_{\widetilde{V}^c}|f_n(t)|\dif t &\lesssim_k \frac{1}{|\Gamma|} \sum_{n:J_n\subset\widetilde{V}} |J_n|^{1/2} \int_{\widetilde{V}^c}|f_n(t)|\dif t  \\
&\lesssim_{k} \frac{|\widetilde{V}|}{|\Gamma|}\lesssim_{k,\gamma} 1.
\end{align*}

Now, let $J_n$ be to the right of $\widetilde{V}$. The case of $J_n$ to the left of $\widetilde{V}$ is then considered similarly.
By \eqref{eq:phiplinks} for $p=\infty$,
\[
|a_n|\leq \frac{1}{|\Gamma|}\int_{\Gamma}|f_n(t)|\dif t  \leq  \frac{1}{|\Gamma|}\int_{V}|f_n(t)|\dif t
\lesssim_{k,\gamma} \frac{q^{d_n(V)}|J_n|^{1/2}}{\dist(V,J_n)+|J_n|}.
\]
This inequality, Lemma \ref{lem:orthsplineJinterval} and the fact that $\dist(V,J_n)\gtrsim_{k,\gamma} \dist(V,J_n)+|V|$ (cf. \eqref{eq:simVGamma}) allow us to deduce
\begin{align*}
\sum_{\substack{n>n_\Gamma \\ J_n\text{ is to the right of }\widetilde{V}}} |a_n|\|f_n\|_1\lesssim_{k,\gamma} \sum_{\substack{n>n_\Gamma \\ J_n\text{ is to the right of }\widetilde{V}}} \frac{q^{d_n(V)}|J_n|}{\dist(V,J_n)+|V|}.
\end{align*}
Note that $V$ can be a union of $k-1$, $k$ or $k+1$ intervals from $\mathcal T_{n_\Gamma +1}$; therefore, let  $V^+$ be a union of
$k-1$ grid intervals form $\mathcal T_{n_\Gamma +1}$, with right endpoint of $V^+$ coinciding with the right endpoint of $V$.
Then, if $J_n$ is to the right of $V$ then $d_n(V) = d_n(V^+)$, $\dist(V,J_n) = \dist(V^+,J_n)$ and --
because of $(k-1)$-regularity of the {point sequence} -- $|V|
\sim_{k,\gamma} |V^+|$, which implies
$$
 \sum_{\substack{n>n_\Gamma \\ J_n\text{ is to the right of }\widetilde{V}}} \frac{q^{d_n(V)}|J_n|}{\dist(V,J_n)+|V|}
 \lesssim_{k,\gamma}
  \sum_{\substack{n>n_\Gamma \\ J_n\text{ is to the right of }\widetilde{V}}} \frac{q^{d_n(V^+)}|J_n|}{\dist(V^+,J_n)+|V^+|}.
$$

Finally, we employ Lemma \ref{lem:sumJn} to conclude
\begin{align*}
\sum_{\substack{n>n_\Gamma \\ J_n\text{ is to the right of }\widetilde{V}}} |a_n|\|f_n\|_1
&\lesssim_{k,\gamma}\sum_{\ell=0}^{\infty}q^\ell \sum_{\substack{n>n_\Gamma \\ d_n(V^+)=\ell \\ J_n\text{ is to the right of }\widetilde{V}}} \frac{|J_n|}{\dist(V^+,J_n)+|V^+|} \\
&\lesssim_{k,\gamma} \sum_{\ell=0}^\infty (\ell+1)^2q^\ell \lesssim_{k} 1.
\end{align*}

To conclude the proof, note that if $f \in H^1$ and $f = \sum_{m=1}^\infty c_m \phi_m$ is an atomic decomposition
of $f$, then $\langle f, f_n\rangle = \sum_{m=1}^\infty c_m  \langle \phi_m,f_n\rangle$, and
$ Pf (t) \leq \sum_{m=1}^\infty |c_m| P\phi_m (t)$.
\end{proof}

\bigskip

Finally, we discuss $\eqref{en:C}\Rightarrow\eqref{en:D}$.

\begin{prop}[$\eqref{en:C}\Rightarrow\eqref{en:D}$]\label{prop:C->D}
Let $(t_n)$ be a $k$-admissible sequence of knots in $[0,1]$ satisfying the $k$-regularity condition with parameter $\gamma$ and let $(a_n)$ be a sequence of coefficients such that $S=\sup_{m}\big|\sum_{n\leq m} a_n f_n\big|\in L^1$. Then, there exists a function $f\in H^1$ with $a_n=\langle f,f_n\rangle$ for each $n$. Moreover, we have the inequality
\[
\|f\|_{H^1}\lesssim_{k,\gamma} \|Sf\|_1.
\]
\end{prop}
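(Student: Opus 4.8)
The plan is to construct the candidate function $f$ as a limit of partial sums and then produce an atomic decomposition whose $\ell^1$-norm of coefficients is controlled by $\|Sf\|_1$. Write $g_m := \sum_{n\leq m} a_n f_n$, so that $|g_m|\leq S$ for all $m$ and hence $\sup_m\|g_m\|_1\leq\|S\|_1<\infty$. Since $S\in L^1$, for any $\lambda>0$ the level set $[S>\lambda]$ has finite measure, and a stopping-time / Calder\'on--Zygmund type decomposition at height $\lambda$ should allow us to split each $g_m$ (and ultimately $f$) into a "good" bounded part and a sum of pieces supported on a controlled family of intervals. First I would verify that $g_m$ converges in $L^1$ to some $f$: the unconditional-convergence machinery is not available here, but one can use a Cauchy argument exploiting that $g_m - g_{m'}$ is itself a spline series with maximal function bounded by $2S$, together with the almost-everywhere convergence from Theorem~\ref{thm:ae} applied to any $L^1$-weak-$*$ limit; then $\langle f,f_n\rangle = a_n$ follows because $(f_n)$ is a basis in $L^1$ in the Ces\`aro/partial-sum sense and the pairing is continuous.

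The core of the argument is the atomic decomposition. Following the scheme of \cite{GevorkyanKamont2005}, I would run a Calder\'on--Zygmund decomposition of $f$ at dyadic heights $\lambda_j = 2^j$: let $\Omega_j := [Sf > 2^j]$, decompose $\Omega_j$ into maximal dyadic-type intervals relative to the grid $(t_n)$, and on each such interval $I$ define the corresponding block of the spline expansion, namely $\sum_{n\in A(I,j)} a_n f_n$ where $A(I,j)$ collects the indices $n$ whose characteristic interval $J_n$ is "captured" at generation $j$ by $I$. The key structural input is that the characteristic intervals $J_n$ behave like dyadic cubes: they are nested (Lemma~\ref{lem:jinterval}) and, crucially here, the $k$-regularity hypothesis guarantees that the grid-point intervals adjacent to any given interval are comparable in length, so that for an interval $I$ that is a union of grid intervals one can build a slightly enlarged interval $\widetilde I$ with $|\widetilde I|\sim_{k,\gamma}|I|$ having grid points of the appropriate $\mathcal T_n$ as endpoints — this is exactly the role $k$-regularity plays (as opposed to the $(k-1)$-regularity used in Proposition~\ref{prop:D->A}). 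Each block, after subtracting its mean and normalizing, is then shown to be a multiple of an atom supported on $\widetilde I$, using the exponential decay estimates of Lemma~\ref{lem:lporthspline} to handle the tails of $f_n$ for $J_n\subset I$ that leak outside $\widetilde I$, and using Lemma~\ref{lem:orthsplineJinterval} plus $|a_n|\lesssim_k |J_n|^{-1/2}\,|I|^{-1}\int_I S f$ type bounds to control the sup-norm on $\widetilde I$.

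The remaining step is to sum the atomic coefficients. The total contribution is controlled by $\sum_j 2^j |\Omega_j|$-type sums times the generational overlap, which telescopes to $\sum_j 2^j |[Sf>2^j]| \lesssim \|Sf\|_1$; the combinatorial Lemma~\ref{lem:jinterval} ensures bounded multiplicity in how many times a given interval is charged, and Lemma~\ref{lem:JnsubsetV} together with the second estimate of Lemma~\ref{lem:sumJn} bounds the off-interval $L^1$-mass of the spline blocks by a constant times $|I|$, so the "error" from truncating outside $\widetilde I$ is absorbed into the good part at the next level. Putting these estimates together gives $\|f\|_{H^1}\lesssim_{k,\gamma}\|Sf\|_1$.

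I expect the main obstacle to be the bookkeeping of the decomposition itself: precisely defining, for each generation $j$ and each Calder\'on--Zygmund interval $I$, which indices $n$ get assigned to the block at $(I,j)$ so that (a) every $n$ is used exactly once across all generations, (b) each block has vanishing integral after subtracting a correction that itself moves to the next generation, and (c) the tails are summable. This is where the interplay between the nesting of characteristic intervals, the exponential decay of $f_n$, and the regularity of the grid all have to be orchestrated simultaneously; the individual estimates are in hand via the lemmas above, but verifying that the assignment is consistent and that the correction terms do not accumulate is the delicate point. The $k$-regularity hypothesis is essential precisely here, because it is what lets us pass freely between an interval and a comparable grid-aligned enlargement at every scale.
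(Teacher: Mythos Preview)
Your overall strategy---Calder\'on--Zygmund at dyadic heights of $S$, then atomic decomposition---is the right genre, but the specific mechanism you propose diverges from the paper's and has two concrete gaps.

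First, you invoke Lemma~\ref{lem:sumJn} to control off-interval mass, but that lemma is stated and proved under the $(k-1)$-regularity hypothesis, whereas Proposition~\ref{prop:C->D} assumes only $k$-regularity. You cannot use it here.

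Second, and more structurally, you propose to form blocks $\sum_{n\in A(I,j)} a_n f_n$ by grouping terms according to their characteristic intervals $J_n$, and then bound these blocks in $L^\infty$ on an enlarged interval $\widetilde I$. But a block assembled this way is \emph{not} a difference of two partial sums, so the only pointwise control available from the hypothesis $S\in L^1$---namely $|S_m|\leq S$---does not apply to it. Your suggested bound $|a_n|\lesssim_k |J_n|^{-1/2}|I|^{-1}\int_I Sf$ is not justified: individual coefficients $a_n$ with $J_n\subset I$ can be large even when $S$ is moderate on most of $I$, because of cancellation between successive partial sums. The characteristic-interval grouping is the engine behind $(A)\Rightarrow(B),(C)$ (where the hypothesis is $P\in L^1$, giving termwise control), not behind $(C)\Rightarrow(D)$.

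The paper's route avoids both issues. It never groups the series by characteristic intervals. Instead it fattens the level sets via the Hardy--Littlewood maximal function, $B_r=[\mathcal M\charfun_{[S>2^r]}>c_{k,\gamma}]$, decomposes $B_r=\bigcup_\kappa\Gamma_{r,\kappa}$, and defines $g_r$ to equal $f$ on $B_r^c$ and the average of $f$ over each $\Gamma_{r,\kappa}$. The atoms are then $(g_{r+1}-g_r)\charfun_{\Gamma_{r,\kappa}}$, which have mean zero automatically. The entire difficulty is the pointwise bound $|g_r|\lesssim_{k,\gamma}2^r$. On $B_r^c$ this follows from Proposition~\ref{prop:poly}, since each partial sum is a polynomial on a grid interval that meets $E_r^c$ in at least half its measure. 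On a component $\Gamma=\Gamma_{r,\kappa}$ the paper constructs nested sequences of B-spline supports $U_0\supset U_1\supset\cdots$ and $W_0\supset W_1\supset\cdots$ straddling the endpoints of $\Gamma$, together with test functions $\phi_{\ell,m}\in\mathcal S_{n}$ (for suitable $n$) that equal $1$ on most of $\Gamma$; orthogonality $\langle f_n,\phi_{\ell,m}\rangle=0$ converts $\int_\Gamma f$ into a telescoping sum of integrals over the $U_\ell$ and $W_m$ of \emph{genuine} differences of partial sums $\sum_{u_\ell<n\leq u_{\ell+1}}a_nf_n$, to which the $S\leq 2^r$ bound and Proposition~\ref{prop:poly} apply. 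The $k$-regularity is used to ensure $|U_\ell|,|W_m|$ are comparable to $|\Gamma|$ and decay geometrically (Lemma~\ref{lem:geom}), so the sum over $\ell,m$ converges.

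In short: the bookkeeping obstacle you anticipate in your last paragraph is real, and the paper resolves it not by assigning indices $n$ to intervals but by replacing the series-level decomposition with a function-level one where every piece that needs an $L^\infty$ bound is a difference of two consecutive partial sums restricted to a grid-aligned interval.
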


\begin{proof}
If $S\in L^1$, then there is a function $f\in L^1$ such that $f=\sum_{n\geq -k+2} a_n f_n$ with convergence in $L^1$. Indeed, this is a consequence of the relative weak compactness of uniformly integrable subsets in $L^1$ and the basis property of $(f_n)$ in $L^1$. Thus, we need only show that $f\in H^1$ and this is done by finding a suitable atomic decomposition of $f$.

We define $E_0=B_0=[0,1]$ and, for $r\geq 1$,
\begin{align*}
E_r=[S>2^r],\qquad B_r=[\mathcal M\charfun_{E_r}> c_{k,\gamma}],
\end{align*}
where $\mathcal M$ denotes the Hardy-Littlewood maximal function and $0<c_{k,\gamma}\leq 1/2$ is a small constant only depending on $k$ and $\gamma$ which is chosen according to a few restrictions that will be given during the proof. We note that
\[
\mathcal M \charfun_{E_r}(t)=\sup_{I\ni t}\frac{|I\cap E_r|}{|I|},\qquad t\in[0,1],
\]
where the supremum is taken over all intervals containing the point $t$.
 Since the Hardy-Littlewood maximal function {operator} $\mathcal M$ is of weak type $(1,1)$, we have the inequality $|B_r|\lesssim_{k,\gamma} |E_r|$. Due to the fact that $S\in L^1$, $|E_r|\to 0$ and, as a consequence, $|B_r|\to 0$ as $r\to\infty$. Now, decompose the open set $B_r$ into a countable union of disjoint open intervals
\[
B_r=\bigcup_{\kappa} \Gamma_{r,\kappa},
\]
{where for fixed $r$, no two intervals $\Gamma_{r, \kappa}$ have a common endpoint and the above equality is up to measure $0$ {(each open set of real numbers can be decomposed into a countable union of open intervals, but it can happen that two intervals have the same endpoint. In that case, we collect those two intervals to one $\Gamma_{r,\kappa}$).
This can be achieved by taking as $\Gamma_{r,\kappa}$ the collection of level sets of positive measure of
the function $t \to | [0,t] \cap B_r^c|$.}}

Moreover, observe that if $\Gamma_{r+1,\xi}$ is one of the intervals in the decomposition of $B_{r+1}$, then there is an interval $\Gamma_{r,\kappa}$ in the decomposition of $B_r$ such that $\Gamma_{r+1,\xi}\subset \Gamma_{r,\kappa}$.

Based on this decomposition, we define the following functions for $r\geq 0$:
\begin{equation*}
g_r(t):=\begin{cases}
f(t), & t\in B_r^c, \\
\frac{1}{|\Gamma_{r,\kappa}|}\int_{\Gamma_{r,\kappa}} f(t)\dif t, & t\in\Gamma_{r,\kappa}.
\end{cases}
\end{equation*}
Observe that $f=g_0+\sum_{r=0}^\infty (g_{r+1}-g_r)$ in $L^1$ 
and $g_{r+1}-g_r=0$ on $B_r^c$. As a consequence,
\begin{align*}
\int_{\Gamma_{r,\kappa}} g_{r+1}(t)\dif t
&=\int_{\Gamma_{r,\kappa}\cap B_{r+1}^c} g_{r+1}(t)\dif t + \int_{\Gamma_{r,\kappa}\cap B_{r+1}} g_{r+1}(t)\dif t \\
&=\int_{\Gamma_{r,\kappa}\cap B_{r+1}^c} f(t)\dif t + \sum_{\xi\,:\, \Gamma_{r+1,\xi}\subset \Gamma_{r,\kappa}} \int_{\Gamma_{r+1,\xi}} f(t)\dif t \\
&= \int_{\Gamma_{r,\kappa}} f(t)\dif t=  \int_{\Gamma_{r,\kappa}} g_r(t)\dif t.
\end{align*}
The main step of the proof is to show that
\begin{equation}\label{eq:gpointwise}
|g_r(t)|\leq C_{k,\gamma} 2^r,\qquad \text{a.e. } t\in [0,1]
\end{equation}
for some constant $C_{k,\gamma}$ only depending on $k$ and $\gamma$.
Once this inequality is proved, we take $\phi_0\equiv 1$, $\eta_0=\int_0^1 f(u)\dif u$ and
\[
\phi_{r,\kappa}:=\frac{(g_{r+1}-g_r)\charfun_{\Gamma_{r,\kappa}}}{C_{k,\gamma}2^r|\Gamma_{r,\kappa}|},\qquad \eta_{r,\kappa}=C_{k,\gamma} 2^r |\Gamma_{r,\kappa}|
\]
and observe that $f=\eta_0\phi_0+\sum_{r,\kappa}\eta_{r,\kappa}\phi_{r,\kappa}$ is the desired atomic decomposition of $f$ since
\begin{align*}
\sum_{r,\kappa}\eta_{r,\kappa}
&\leq C_{k,\gamma} \sum_{r,\kappa} 2^r |\Gamma_{r,\kappa}| =C_{k,\gamma}\sum_{r} 2^r |B_{r}|  \\
&\lesssim_{k,\gamma} \sum_{r} 2^r |E_r| \lesssim \|S\|_1.
\end{align*}
Thus it remains to prove inequality \eqref{eq:gpointwise}.

In order to do this, we first assume $t\in B_r^c$. Additionally, assume that $t$ is a point such that the series $\sum_n a_nf_n(t)$ converges to $f(t)$ and that $t$ does not occur in the grid point sequence $(t_n)$. By Theorem \ref{thm:ae}, this holds for a.e. point in $[0,1]$. We fix the index $m$ and let $V_m$ be the maximal interval where the function $S_m:=\sum_{n\leq m} a_nf_n$ is a polynomial of order $k$ that contains the point $t$. Then, $V_m\not\subset B_r$ and since $V_m$ is an interval containing the point $t$,
\[
|V_m\cap E_r^c|\geq (1-c_{k,\gamma}) |V_m|\geq |V_m|/2.
\]
Since $|S_m|\leq 2^r$ on $E_r^c$, the above display and Proposition \ref{prop:poly} imply $|S_m|\lesssim_{k} 2^r$ on $V_m$ and in particular $|S_m(t)|\lesssim_k 2^r$. Now, $S_m(t)\to f(t)$ as $m\to\infty$ by the assumptions on $t$ and thus,
\[
|g_r(t)|=|f(t)|\lesssim_k 2^r.
\]
This concludes the proof of \eqref{eq:gpointwise} in the case of $t\in B_r^c$.

Next, we fix an index $\kappa$ and consider $g_r$ on $\Gamma:=[\alpha,\beta]:=\Gamma_{r,\kappa}$. Let $n_\Gamma$ be the first index such that there are $k+1$ points from $\mathcal T_{n_\Gamma}$ contained in $\Gamma$, i.e., there exists a support $D_{n_\Gamma,i}^{(k)}$ of a B-spline function of order $k$ in the grid $\mathcal T_{n_\Gamma}$ that is contained in $\Gamma$. Additionally, we define the intervals
\[
U_0:=[\tau_{n_\Gamma,i-k},\tau_{n_\Gamma,i}],\qquad W_0:=[\tau_{n_\Gamma,i+k},\tau_{n_\Gamma,i+2k}].
\]
Note that if  $\alpha \in \mathcal T_{n_\Gamma}$, then $\alpha$  is a common endpoint of
$U_0$ and $\Gamma$, otherwise $\alpha$ is an interior point of $U_0$. Similarly, if   $\beta \in \mathcal T_{n_\Gamma}$, then $\beta$  is a common endpoint of
$W_0$ and $\Gamma$, otherwise $\beta$ is an interior point of $W_0$.
By $k$-regularity of the point sequence $(t_n)$, $\max(|U_0|,|W_0|)\lesssim_{k,\gamma}  |\Gamma|$.
We first estimate the part $S_\Gamma:=\sum_{n\leq n_\Gamma} a_n f_n$ and show the inequality $|S_\Gamma|\lesssim_{k,\gamma} 2^r$ on $\Gamma$. Observe that on $\Delta:=U_0\cup\Gamma\cup W_0$, $S_\Gamma$ can be represented as a linear combination of B-splines $(N_j)$ on the grid $\mathcal T_{n_\Gamma}$ of the form
\[
S_\Gamma(t)=h(t):=\sum_{j=i-2k+1}^{i+2k-1} b_j N_j(t),
\]
for some coefficients $(b_j)$. For $j=i-2k+1,\dots,i+2k-1$, let $J_j$ be a maximal interval of $\supp N_j$ and observe that due to $k$-regularity, $|J_j|\sim_{k,\gamma} |\Gamma|\sim_{k,\gamma}|\supp h|$.

 If we assume that $\max_{J_j}|S_\Gamma|> C_{k} 2^r$, where $C_k$ is the constant obtained from Proposition \ref{prop:poly} with $\rho=1/2$, then Proposition \ref{prop:poly} implies that $|S_\Gamma|> 2^r$ on a subset $I_j$ of $J_j$ with measure $\geq |J_j|/2$. As a consequence, 
\[
|\supp h\cap E_r|\geq |J_j\cap E_r|\geq  |J_j|/2\gtrsim_{k,\gamma} |\supp h|.
\]
We choose the constant $c_{k,\gamma}$ in the definition of $B_r$ sufficiently small to guarantee that this last inequality implies $\supp h\subset B_r$. This is a contradiction to the choice of $\Gamma$, which implies that our assumption $\max_{J_j}|S_\Gamma|>C_k2^r$ is not true and thus
\[
\max_{J_j}|S_{\Gamma}|\leq C_k2^r,\qquad j=i-2k+1,\dots,i+2k-1.
\]
By local stability of B-splines, i.e., inequality \eqref{eq:lpstab} in Proposition \ref{prop:lpstab},
this implies
\[
|b_j|\lesssim_k 2^r,\qquad j=i-2k+1,\dots,i+2k-1,
\]
and so $|S_\Gamma|\lesssim_k 2^r$ on $\Delta$. This means
\begin{equation}\label{eq:firstpart}
\int_{\Gamma} |S_\Gamma| \lesssim_k 2^r|\Gamma|,
\end{equation}
which is inequality \eqref{eq:gpointwise} for the part $S_\Gamma$.

In order to estimate the remaining part, we inductively define two sequences $(u_s,U_s)_{i\geq 0}$ and $(w_s,W_s)_{s\geq 0}$ consisting of integers and intervals. Set $u_0=w_0=n_\Gamma$ and inductively define $u_{s+1}$ to be the first index $n>u_s$ such that $t_n\in U_s$. Moreover, define $U_{s+1}$ to be the B-spline support $D_{u_{s+1},i^{(k)}}$ in the grid $\mathcal T_{u_{s+1}}$, where $i$ is the minimal index such that $D_{u_{s+1},i}^{(k)}\cap\Gamma\neq\emptyset$.
 Similarly, we define $w_{s+1}$ to be the first index $n>w_s$ such that $t_n\in W_s$ and $W_{s+1}$ as the B-spline support $D_{w_{s+1},i}^{(k)}$ in the grid $\mathcal T_{w_{s+1}}$, where $i$ is the maximal index such that $D_{w_{s+1},i}^{(k)}\cap \Gamma\neq\emptyset$. It can easily be seen that this construction implies $U_{s+1}\subset U_s$, $W_{s+1}\subset W_s$ and $\alpha\in U_s$, $\beta\in W_s$ for all $s\geq 0$, or more precisely:
 if $\alpha \in \mathcal T_{u_{s}}$, then $\alpha$ is a common endpoint of $U_{s}$ and $\Gamma$, otherwise $\alpha$ is an inner point of $U_{s}$,
 and similarly, if $\beta \in \mathcal T_{u_{s}}$, then $\beta$ is a common endpoint of $W_{s}$ and $\Gamma$, otherwise $\beta$ is an inner point of $W_{s}$.

For a pair of indices $\ell,m$, let
\[
x_\ell:=\sum_{\nu=0}^{k-1}N_{u_\ell,i+\nu}\charfun_{U_\ell},\qquad y_m:=\sum_{\nu=0}^{k-1}N_{w_m,j-\nu}\charfun_{W_m},
\]
where $N_{u_\ell,i}$ is the B-spline function on the grid $\mathcal T_{u_\ell}$ with support $U_\ell$ and $N_{w_m,j}$ is the B-spline function on the grid $\mathcal T_{w_m}$ with support $W_m$. The function
\[
\phi_{\ell,m}:=x_\ell + \charfun_{\Gamma\setminus (U_\ell\cup W_m)}+y_m
\]
is zero on $(U_\ell\cup \Gamma\cup W_m)^c$, one on $\Gamma\setminus(U_\ell\cup W_m)$ and a piecewise polynomial function of order $k$ in between. For $\ell,m\geq 0$, consider the following subsets of $\{n: n>n_\Gamma\}$:
\[
L(\ell):=\{n:u_\ell<n\leq u_{\ell+1}\},\qquad R(m):=\{n:w_m<n\leq w_{m+1}\}.
\]
If $n\in L(\ell)\cap R(m)$, we clearly have $\langle f_n,\phi_{\ell,m}\rangle=0$ and thus
\begin{equation}
\int_{\Gamma} f_n(t)\dif t=\int_{\Gamma} f_n(t)\dif t-\int_0^1 f_n(t)\phi_{\ell,m}(t)\dif t=A_\ell(f_n)+B_m(f_n),
\end{equation}
where
\begin{align*}
A_\ell(f_n)&:=\int_{\Gamma\cap U_\ell} f_n(t)\dif t-\int_{U_\ell} f_n(t)x_\ell(t)\dif t, \\ B_m(f_n)&:=\int_{\Gamma\cap W_m} f_n(t)\dif t-\int_{W_m} f_n(t)y_m(t)\dif t.
\end{align*}
This implies
\begin{equation}
\label{eq:splittingUW}
\begin{aligned}
\Big|\int_\Gamma \sum_{n=n_\Gamma+1}^\infty &a_nf_n(t)\dif t\Big|
= \Big|\sum_{\ell,m=0}^{\infty}\sum_{n\in L(\ell)\cap R(m)}a_n\big(A_\ell(f_n)+B_m(f_n)\big)\Big| \\
&\leq 2\sum_{\ell=0}^{\infty}\int_{U_\ell}\Big|\sum_{n\in L(\ell)}a_nf_n(t)\Big|\dif t +  2\sum_{m=0}^{\infty}\int_{W_m}\Big|\sum_{n\in R(m)}a_nf_n(t)\Big|\dif t.
\end{aligned}
\end{equation}
Consider the first sum on the right hand side. On $U_\ell=D_{u_\ell,i}^{(k)}$, the function $\sum_{n\in L(\ell)} a_n f_n$ can be represented as a linear combination of B-splines $(N_j)$ on the grid $\mathcal T_{u_\ell}$ of the form
\[
\sum_{n\in L(\ell)} a_nf_n=h_\ell:=\sum_{j=i-k+1}^{i+k-1} b_j N_j,
\]
for some coefficients $(b_j)$. For $j=i-k+1,\dots, i+k-1$, let $J_j$ be a maximal grid interval of $\supp N_j$ and observe that due to $k$-regularity, $|J_j|\sim_{k,\gamma} |U_\ell|\sim_{k,\gamma} |\supp h_\ell|$.
 On $J_j$, the function $\sum_{n\in L(\ell)} a_nf_n$ is a polynomial of order $k$. If we assume $\max_{J_j} \big|\sum_{n\in L(\ell)} a_nf_n\big|>C_k2^{r+1}$, where $C_k$ is the constant obtained from Proposition \ref{prop:poly} with $\rho=1/2$, then Proposition \ref{prop:poly} implies that $\big|\sum_{n\in L(\ell)} a_nf_n\big|>2^{r+1}$ on a set $J_j^*\subset J_j$ with $|J_j^*|=|J_j|/2$, but this means $\max\big(|\sum_{n\leq u_\ell} a_nf_n\big|,|\sum_{n\leq u_{\ell+1}} a_nf_n\big|\big)>2^r$ on the set $J^*_j$. As a consequence,
\[
| E_r \cap \supp h_\ell|\geq|E_r\cap J_j|\geq |J_j^*|\geq |J_j|/2\gtrsim_{k}|\supp h_\ell|.
\]
We choose the constant $c_{k,\gamma}$ in the definition of $B_r$ sufficiently small to guarantee that this last inequality implies $\supp h_\ell\subset B_r$. This is a contradiction to the choice of $\Gamma$, which implies that our assumption $\max_{J_j}\big|\sum_{n\in L(\ell)} a_nf_n\big|>C_k2^r$ is not true and thus
\[
\max_{J_j}\big|\sum_{n\in L(\ell)} a_nf_n\big|\leq C_k2^r,\qquad j=i-k+1,\dots,i+k-1.
\]
By local stability of B-splines, i.e., inequality \eqref{eq:lpstab} in Proposition \ref{prop:lpstab},
this implies
\[
|b_j|\lesssim_k 2^r,\qquad j=i-k+1,\dots,i+k-1,
\]
and so $\big|\sum_{n\in L(\ell)} a_nf_n\big|\lesssim_k 2^r$ on $U_\ell$, which gives
\[
\int_{U_\ell} \big|\sum_{n\in L(\ell)} a_nf_n\big|\lesssim_k 2^r|U_\ell|.
\]
Combining Lemma \ref{lem:geom}, the inclusions $U_{\ell+1}\subset U_\ell$ and the inequality $|U_0|\lesssim_{k,\gamma} |\Gamma|$, we see that $\sum_{\ell=0}^{\infty}|U_\ell|\lesssim_{k,\gamma} |\Gamma|$. Thus we get
\[
\sum_{\ell=0}^{\infty}\int_{U_\ell} \big|\sum_{n\in L(\ell)} a_nf_n\big|\lesssim_{k,\gamma} 2^r|\Gamma|.
\]
The second sum on the right hand side of \eqref{eq:splittingUW} is estimated similarly which gives
\[
\sum_{m=0}^{\infty}\int_{W_m} \big|\sum_{n\in R(m)} a_nf_n\big|\lesssim_{k,\gamma} 2^r|\Gamma|.
\]
Combining these estimates with \eqref{eq:splittingUW} and \eqref{eq:firstpart}, we find
\[
\Big|\int_{\Gamma}f(t)\dif t\Big|=\Big|\int_{\Gamma}\sum_n a_nf_n(t)\dif t\Big|\lesssim_{k,\gamma}2^r|\Gamma|,
\]
which implies \eqref{eq:gpointwise} on $\Gamma$ and therefore, the proof of the propositon is completed.
\end{proof}

\section{Proof of the main theorem}\label{main.proof}

For the proof of the necessity part of Theorem \ref{thm:uncond}, we will use the following:

\begin{prop}\label{prop:not_D->A}
Let $(t_n)$ be a $k$-admissible sequence of knots satisfying the $k$-regularity condition with parameter $\gamma$, but which is not satisfying any $(k-1)$-regularity condition. Then,
\[
\sup \| \sup_n |a_n(\phi)f_n|\|_1=\infty,
\]
where $\sup$ is taken over all atoms $\phi$ and $a_n(\phi):=\langle\phi,f_n\rangle$.
\end{prop}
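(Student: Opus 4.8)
The plan is first to record how this proposition yields the ``only if'' part of Theorem \ref{thm:uncond}, and then to prove the proposition. If $(f_n)$ were an unconditional basis in $H^1$ it would in particular be a basis in $H^1$, so by Theorem \ref{thm:basis} the sequence $(t_n)$ would be $k$-regular with some $\gamma$. Moreover, unconditionality in $H^1$ together with $\|\cdot\|_1\lesssim\|\cdot\|_{H^1}$ gives, for every atom $\phi$, that $\sum_n a_n(\phi)f_n$ converges unconditionally in $L^1$ with $\sup_\varepsilon\|\sum_n\varepsilon_na_n(\phi)f_n\|_1\lesssim\|\phi\|_{H^1}\le1$; Proposition \ref{prop:B->A} then yields $\|P\phi\|_1\lesssim1$, and since $\sup_n|a_n(\phi)f_n|\le P\phi$ pointwise we would get $\sup_\phi\|\sup_n|a_n(\phi)f_n|\|_1<\infty$. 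If $(t_n)$ were not $(k-1)$-regular this would contradict Proposition \ref{prop:not_D->A}, forcing $(t_n)$ to be $(k-1)$-regular. So it remains to prove the proposition, and the goal is: for every constant, produce an atom $\phi$ with $\|\sup_n|a_n(\phi)f_n|\|_1$ exceeding it.

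\textbf{Extracting a configuration.} Fix a large parameter $M$. By the hypothesis there is an index $N$ and a place where two neighbouring $(k-1)$-st order B-spline supports in $\mathcal{T}_N$ have length ratio $>M$; unwinding the definitions, $\mathcal{T}_N$ then contains a long grid interval of length $\sim L$, a cluster of at most $k-1$ grid intervals of total length $\sim L/M$, and, on the far side of the cluster, another grid interval whose length is $\gtrsim_\gamma L/M$ — and, crucially, $k$-regularity (which controls only the $k$-fold supports) forces this far interval to again have length $\sim_\gamma L$, so the cluster is sandwiched between two intervals of comparable size. One then uses $k$-regularity \emph{and} density once more: when the refinement eventually subdivides one of the long intervals, the new knot is constrained (again by the $k$-fold ratio bound applied to the supports reaching into the cluster) to fall near the cluster, spawning a new irregular configuration of the same type at a finer scale. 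Iterating produces, near a fixed small interval $\Gamma$ of length $\sim L/M$, a chain of grid refinements to which we attach orthonormal spline functions $f_{n_1},f_{n_2},\dots$ whose characteristic intervals $J_{n_j}$ cluster near $\Gamma$ with geometrically related but pairwise disjoint supports.

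\textbf{The atom and the lower bound.} Take $\phi$ a suitably normalised mean-zero function supported on a bounded enlargement of $\Gamma$, chosen to exploit the common ``tilt'' of the $f_{n_j}$ across $\Gamma$ (so that $|a_{n_j}(\phi)|=|\langle\phi,f_{n_j}\rangle|$ is comparable to $\mathrm{osc}_\Gamma f_{n_j}$, which by the pointwise bounds of Lemma \ref{lem:lporthspline} is in turn $\sim_k\|f_{n_j}\|_{L^\infty(\Gamma)}$ as long as $\Gamma$ stays coarse, i.e.\ contains fewer than $k$ knots, in the relevant grids). Since the $J_{n_j}$ are pairwise disjoint, $\|\sup_n|a_n(\phi)f_n|\|_1\ge\sum_j\int_{J_{n_j}}|a_{n_j}(\phi)f_{n_j}|\gtrsim_k\sum_j|a_{n_j}(\phi)|\,|J_{n_j}|^{1/2}$ by Lemma \ref{lem:orthsplineJinterval}. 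The heart of the matter is to show that this sum is $\gtrsim_{k,\gamma}$ a quantity tending to $\infty$ with $M$. This is exactly the point where the proof of Proposition \ref{prop:D->A} used $(k-1)$-regularity: there the nested intervals $\Delta_i$ (respectively the exponents in Lemma \ref{lem:sumJn}) decayed geometrically by Lemma \ref{lem:geom}, which made the analogous sums converge; here, with $(k-1)$-regularity removed, the chain of configurations produced above has length growing with $M$ and no such geometric gain is available, so the terms do not telescope away.

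\textbf{Main obstacle.} The delicate point — and the one the argument must take care of — is the tension between ``one fixed atom has $L^1$-mass at most one'' and ``we want infinitely many spline functions to correlate substantially with it.'' A single term $\|a_n(\phi)f_n\|_1$ is always $\lesssim_k1$, so the growth must come from accumulation in the supremum, which requires the relevant $f_{n_j}$ to live on disjoint characteristic intervals \emph{and} each to correlate with $\phi$ at the near-maximal rate; one must check that the exponential decay $q^{d_n(\cdot)}$ in Lemma \ref{lem:lporthspline} does not defeat this, i.e.\ that the cascade can be arranged so that the combinatorial distances from $\Gamma$ to the successive $J_{n_j}$ stay controlled in the grids $\mathcal{T}_{n_j}$ in which they are born. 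Establishing this — a careful by-cases bookkeeping, according to whether the grid is coarse or fine relative to $\Gamma$, paralleling and reversing the estimates in the proof of Proposition \ref{prop:D->A}, and keeping precise track of how the $(k-1)$-irregularity of parameter $M$ propagates under refinement — is the technical core of the proof.
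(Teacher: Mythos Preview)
Your reduction paragraph is correct and matches how the paper uses the proposition. The sketch of the construction, however, misidentifies the mechanism at both of its two crucial points.

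First, the configuration. In the paper the small cluster does \emph{not} cascade to finer scales; it stays fixed throughout the iteration. This is the content of Lemma~\ref{lem:comb1}, property~(2): $\Lambda_j=\Lambda_0$ for every $j$. What changes is the long interval $L_j$ adjacent to $\Lambda_0$: $k$-regularity forces the next new knot $t_{n_{j+1}}$ to fall inside $L_j$ (not into $\Lambda_0$), splitting $L_j=L_{j+1}\cup R_{j+1}$. The pairwise disjoint intervals one sums over are the $R_j$, not the characteristic intervals $J_{n_j}$; nothing prevents $J_{n_j}$ from being the nested $L_j$. The number $\ell$ of usable steps is governed by the size $C_0$ of the initial $(k-1)$-irregularity ratio, so one first fixes $\ell$ and then chooses $C_0$ large enough.

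Second, and more seriously, the coefficient lower bound. You claim $|a_{n_j}(\phi)|\sim\mathrm{osc}_\Gamma f_{n_j}\sim\|f_{n_j}\|_{L^\infty(\Gamma)}$ ``by the pointwise bounds of Lemma~\ref{lem:lporthspline}'', but that lemma supplies only \emph{upper} bounds; nothing in it forces $f_{n_j}$ to oscillate on $\Gamma$ at all. The paper obtains $|a_{n_j}(\phi)|\gtrsim|L_j|^{-1/2}$ by integrating by parts and isolating, in the derivative formula~\eqref{eq:splinederivative} for $f_{n_j}'$, the single $(k-1)$-order B-spline whose support is exactly $\Lambda_0$: its coefficient $(w_{i_j-k}-w_{i_j-k-1})/|\Lambda_0|$ is large precisely because the denominator $|D^{(k-1)}_{n_j,i_j-k}|=|\Lambda_0|$ is small. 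The numerator is then bounded below using the checkerboard sign pattern~\eqref{eq:betragreinziehen}, the explicit formula~\eqref{eq:alpha2} for $\alpha_{i_j-k}$, and the diagonal estimate~\eqref{estbi:lower} for the inverse Gram matrix. This is where the $(k-1)$-irregularity enters quantitatively, through a small $(k-1)$-support in the derivative representation, and it cannot be replaced by the soft decay estimates you cite. The companion lower bound $\int_{R_j}|f_{n_j}|\gtrsim|L_j|^{1/2}$ likewise needs a lower bound on the B-spline coefficient $|w_{i_j}|$, again via~\eqref{eq:alpha2} and~\eqref{estbi:lower}, not Lemma~\ref{lem:orthsplineJinterval} alone (which controls $\int_{J_{n_j}}|f_{n_j}|$, not $\int_{R_j}|f_{n_j}|$).
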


 Proposition \ref{prop:not_D->A} implies in particular  that Proposition \ref{prop:D->A} cannot be extended to arbitrary partitions.
 For the proof of Proposition \ref{prop:not_D->A} we need the following technical Lemma \ref{lem:comb1}.

\begin{lem}\label{lem:comb1}
Let $(t_n)$ be a $k$-admissible sequence of knots satisfying the $k$-regularity condition with parameter $\gamma\geq 1$, but is not satisfying any $(k-1)$-regularity condition. Let $\ell$ be an arbitrary positive integer. Then, for all $A\geq 2$, there exists a finite increasing sequence $(n_j)_{j=0}^{\ell-1}$ of length $\ell$ such that if $\tau_{n_j,i_j}$ is the new point in $\mathcal T_{n_j}$ that is not present in $\mathcal T_{n_j-1}$ and
\begin{align*}
\Lambda_j:=[\tau_{n_j,i_j-k},\tau_{n_j,i_j-1}),\qquad L_j:=[\tau_{n_j,i_j-1},\tau_{n_j,i_j}),\qquad R_j:=[\tau_{n_j,i_j},\tau_{n_j,i_j+1}),
\end{align*}
we have for all indices $i,j$ in the range $0\leq i<j\leq \ell-1$
\begin{enumerate}
\item $R_i\cap R_j=\emptyset$, \label{it:1}
\item $\Lambda_i=\Lambda_j$, \label{it:2}
\item $(2 \gamma -1) |L_j| \geq |[\tau_{n_j,i_j-k-1},\tau_{n_j,i_j-k}]|\geq  {\frac{| L_j|}{2 \gamma}}$, \label{it:3}
\item \label{it:3.5}
$|R_j|\leq (2\gamma-1)|L_j|$,
\item \label{it:4}
$|L_j|\leq 2(\gamma+1)k\cdot|R_j|$,
\item $\min(|L_j|,|R_j|)\geq  A|\Lambda_j|$. \label{it:6}
\end{enumerate}
\end{lem}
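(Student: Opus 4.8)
The plan is to exploit the failure of $(k-1)$-regularity to produce, at each scale, a configuration of $k+1$ consecutive knots in some $\mathcal T_{n}$ where the B-spline support $[\tau_{i-k},\tau_{i}]$ of order $k$ adjacent to a distinguished interval has one very short ``final'' subinterval $L=[\tau_{i-1},\tau_{i}]$ compared to the rest of the support. More precisely, since $(t_n)$ is not $(k-1)$-regular for any parameter, for every $M$ there is a grid $\mathcal T_m$ and an index $p$ with $|D_{m,p}^{(k-1)}|$ and $|D_{m,p+1}^{(k-1)}|$ having ratio $>M$; iterating/combining with $k$-regularity (which controls $D^{(k)}$-ratios and hence prevents degeneracies from being too wild) one extracts a single grid-point interval that is much shorter than its neighbor of comparable ``$D^{(k-1)}$-scale.'' This is the geometric seed of the construction.

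Next I would build the sequence $(n_j)$ inductively. Having fixed $\ell$ and $A$, I first choose a small ``host'' interval $\Lambda$ which will play the role of the common $\Lambda_j$ for all $j$ (this forces (ii)); the requirement \eqref{it:6} that $\min(|L_j|,|R_j|)\geq A|\Lambda_j|$ means $\Lambda$ must be chosen at a much finer scale than the intervals $L_j,R_j$, so one picks $\Lambda$ last in a ``nested from outside in'' fashion, or equivalently one works at $\ell$ well-separated scales and lets $\Lambda$ be a tiny interval sitting to the left of all the action. At step $j$, using the failure of $(k-1)$-regularity at a scale finer than everything chosen so far, I select $n_j$ to be the index at which a new point $\tau_{n_j,i_j}$ is inserted so that the four knots $\tau_{i_j-k-1}<\tau_{i_j-k}<\tau_{i_j-1}<\tau_{i_j}<\tau_{i_j+1}$ satisfy the size relations (iii)--(v): $L_j=[\tau_{i_j-1},\tau_{i_j}]$ is comparable (within factors of $\gamma$) to the ``$D^{(k-1)}$-neighbor'' $[\tau_{i_j-k-1},\tau_{i_j-k}]$, while $R_j=[\tau_{i_j},\tau_{i_j+1}]$ is comparable to $L_j$ up to the constants $2\gamma-1$ and $2(\gamma+1)k$. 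Here $k$-regularity is exactly what delivers the upper bounds $|R_j|\leq(2\gamma-1)|L_j|$ and $|L_j|\leq 2(\gamma+1)k|R_j|$: the $D^{(k)}$ supports straddling $\tau_{i_j}$ have comparable length, and $L_j,R_j$ are subintervals of such supports, while the lengths $|D^{(k-1)}|$ can be split into at most $k-1$ grid intervals whose lengths are again $\gamma$-comparable. The disjointness \eqref{it:1} of the $R_j$ is arranged by choosing the successive insertions to occur in disjoint regions (e.g. marching rightward, or placing the $j$-th configuration strictly to the right of all previously used points), which is possible because $(t_n)$ is dense and we only ever use finitely many constraints at each step.

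I expect the main obstacle to be simultaneously reconciling conditions \eqref{it:2} (all $\Lambda_j$ literally equal, hence all sitting at the same place and scale) with \eqref{it:6} ($|L_j|,|R_j|\geq A|\Lambda_j|$ for the prescribed, arbitrarily large $A$) and \eqref{it:1} (the $R_j$ pairwise disjoint): the $\Lambda_j$ being a fixed interval of some small length $\delta$ forces every $L_j$ and $R_j$ to have length $\geq A\delta$, yet the $R_j$ must fit disjointly while the $L_j$ accumulate near the right endpoint of the common $\Lambda$. The resolution is that $\Lambda$ is chosen to be a $(k-1)$-support $D_{n,\cdot}^{(k-1)}$ that is \emph{stable} (does not get subdivided) as $n$ ranges over all the indices $n_0,\dots,n_{\ell-1}$ we will use — such stable fine intervals exist because a $k$-admissible sequence only inserts one point at a time — and then the configurations at $L_j,R_j$ are placed to the right of (and at a coarser scale than) $\Lambda$, marching rightward so the $R_j$ are automatically disjoint; condition \eqref{it:3} then pins down which of the $k-1$ grid intervals composing the relevant $D^{(k-1)}$ we call $L_j$. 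Once this bookkeeping is set up, verifying (iii)--(v) is a routine application of $k$-regularity to the straddling $D^{(k)}$ supports together with the defining inequality of the $\ell$-regularity failure, so I would not grind through those estimates in detail.
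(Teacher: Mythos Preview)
Your proposal has a genuine structural gap: the geometry you describe is incompatible with the definitions of $\Lambda_j,L_j,R_j$. Since $\Lambda_j=[\tau_{n_j,i_j-k},\tau_{n_j,i_j-1})$ and $L_j=[\tau_{n_j,i_j-1},\tau_{n_j,i_j})$ are \emph{adjacent} in the grid $\mathcal T_{n_j}$, the equality $\Lambda_i=\Lambda_j$ for all $i,j$ forces every $L_j$ to have the \emph{same} left endpoint, namely the right endpoint of the common $\Lambda$. Hence the $L_j$ are nested, $L_0\supset L_1\supset\cdots$, and since $\tau_{n_j,i_j}$ is the new point one gets $R_{j+1}\subset L_j$; the new points march \emph{leftward} toward $\Lambda$, not rightward. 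Your picture of ``placing the $j$-th configuration strictly to the right of all previously used points'' is therefore impossible. Relatedly, the sequence $(t_n)$ is \emph{given}; you do not get to place points, only to select indices $n_j$, so the language of choosing a ``stable'' $\Lambda$ and then arranging $L_j,R_j$ around it is not an available move.

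What the paper actually does is quite different and hinges on a single idea you are missing. One invokes the failure of $(k-1)$-regularity \emph{once}, with a constant $C_0$ large depending on $k,\gamma,\ell,A$, to find a grid $\mathcal T_{n_0}$ and an index $i_0$ with $|L_0|\geq (C_0-1)|\Lambda_0|$. Then one sets $n_{j+1}:=\min\{n>n_j:t_n\in\Lambda_j\cup L_j\}$. The crux is that $k$-regularity \emph{forces} this new point to land in $L_j$ rather than in $\Lambda_j$: if it fell in $\Lambda_j$ one would get $|\Lambda_j|\geq\gamma^{-1}|L_j|$, contradicting the inductively maintained inequality $|L_j|\geq C_j|\Lambda_j|$ (where $C_j=C_{j-1}/\gamma-1$ stays large for $k\ell$ steps by the choice of $C_0$). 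This single mechanism delivers $\Lambda_{j+1}=\Lambda_j$, the nesting $L_j=L_{j+1}\cup R_{j+1}$ (hence disjoint $R_j$'s), and properties \eqref{it:3}, \eqref{it:3.5}, \eqref{it:6} via $k$-regularity. Property \eqref{it:4} is \emph{not} automatic and requires passing to a subsequence: one shows that among any $k$ consecutive indices at least one satisfies $|L_j|\leq 2(\gamma+1)k|R_j|$, for otherwise $|L_s\setminus L_{s+k}|$ would be simultaneously small (by the assumed bounds on $|R_{s+r}|$) and large (by $k$-regularity of $\mathcal T_{n_{s+k}}$). You should rebuild your argument around this ``first return to $\Lambda_j\cup L_j$ is forced into $L_j$'' mechanism.
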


\begin{proof} First, we choose a sequence $(n_j)_{j=0}^{lk}$ so that \eqref{it:1} -- \eqref{it:3.5} hold.
Next, we choose $(n_{m_j})_{j=0}^{l-1}$ -- a subsequence of $(n_j)_{j=0}^{lk}$ -- so that  \eqref{it:4} and \eqref{it:6} hold as well.

The sequence $(t_n)$ does not satisfy any $(k-1)$-regularity condition. As a consequence, for all $C_0$ there exists an index $n_0$ and an index $i_0$ such that
\begin{equation}\label{eq:regC0}
{\rm either} \quad C_0 |D^{(k-1)}_{n_0,i_0-k}|  \leq  |D^{(k-1)}_{n_0,i_0-k+1}| \quad
{\rm or} \quad  |D^{(k-1)}_{n_0,i_0-k}|  \geq  C_0 |D^{(k-1)}_{n_0,i_0-k+1}|.
\end{equation}
We choose $C_0$ sufficiently large such that {with} $C_j:=C_{j-1}/\gamma-1$ for $j\geq 1$, the condition $C_{k\ell}\geq 2\gamma$ is true. We will give an additional restriction for $C_0$ at the end of the proof.
Without loss of generality, we can assume that the first inequality in \eqref{eq:regC0} holds. Taking $\Lambda_0 = [\tau_{n_0,i_0 -k}, \tau_{n_0,i_0-1})$
and $L_0 = [\tau_{n_0,i_0 -1}, \tau_{n_0,i_0})$, $R_0 = [\tau_{n_0,i_0}, \tau_{n_0,i_0+1})$,
we have
\begin{equation}\label{eq:C0}
|[\tau_{n_0,i_0-k+1},\tau_{n_0,i_0}]|\geq C_0|\Lambda_0|.
\end{equation}
A direct consequence of \eqref{eq:C0} is
\begin{equation}\label{eq:C0-1}
|L_0| \geq (C_0-1) |\Lambda_0|.
\end{equation}
By $k$-regularity we have
$$
| D^{(k)} _{n_0,i_0-k-1}| \geq  { |D^{(k)}_{n_0,i_0-k}| \over \gamma} = { |\Lambda_0| + |L_0| \over \gamma} ,
$$
which implies
\begin{equation} \label{eq:ind-0}
\begin{aligned}
|[\tau_{n_0,i_0-k-1},\tau_{n_0,i_0-k}]| & = |D_{n_0,i_0-k-1}^{(k)}|-|\Lambda_0| \\
&\geq \frac{|\Lambda_0|+|L_0|}{\gamma}-|\Lambda_0|\\
& \geq \frac{|L_0|}{2\gamma}+\frac{|\Lambda_0|}{\gamma}+\frac{C_0-1}{2\gamma}|\Lambda_0|-|\Lambda_0| \\
& = {|L_0| \over 2 \gamma} + \Big({C_0 +1 \over 2 \gamma} - 1\Big) |\Lambda_0| \geq { |L_0| \over 2 \gamma},
\end{aligned}
\end{equation}
i.e., the right hand side inequality of \eqref{it:3} for $j=0$.
To get the upper estimate, note that by $k$-regularity
$$
|\Lambda_0| + |[\tau_{n_0,i_0-k-1},\tau_{n_0,i_0-k}]| \leq \gamma (|\Lambda_0| + |L_0|) ,
$$
hence by \eqref{eq:C0-1}
\begin{equation}\label{eq:ind-1}
|[\tau_{n_0,i_0-k-1},\tau_{n_0,i_0-k}]| \leq \gamma |L_0| + (\gamma - 1) |\Lambda_0| \leq (2 \gamma -1) |L_0|.
\end{equation}
This and the previous calculation give (3) for $j=0$.
Therefore, the construction below can be continued either to the right or to the left of $\Lambda_0$.
We continue our construction to the right of $\Lambda_0$.

We continue by induction. Having defined $n_j$, $\Lambda_j$, $L_j$ and $R_j$, we take
\[
n_{j+1}:=\min\{n>n_{j}:t_n\in \Lambda_j\cup L_j\},\qquad j\geq 0.
\]
By definition of $R_j$ and $n_{j+1}$, property \eqref{it:1} is satisfied for all $j\geq 0$.
We identify $t_{n_{j+1}} = \tau_{n_{j+1},i_{j+1}}$.
Thus, by construction,   $t_{n_j} = \tau_{n_j,i_j}$ is a common endpoint of $L_j$ and $R_j$ for $j \geq 1$.

In order to prove \eqref{it:2}, we will show by induction that
\begin{equation}\label{eq:ind}
|[\tau_{n_j,i_j-k+1},\tau_{n_j,i_j}]|\geq C_j|\Lambda_j|
\qquad\text{and}\qquad \Lambda_{j+1}=\Lambda_j
\end{equation}
for all $j=0,\dots,k\ell$.
We remark that the equation $\Lambda_{j+1}=\Lambda_j$ is equivalent to the condition $\tau_{n_{j+1},i_{j+1}}\in L_j$.

The first inequality of \eqref{eq:ind} for $j=0$ is exactly \eqref{eq:C0}.
If the second identity in \eqref{eq:ind} were not satisfied for $j=0$, i.e.,
$\tau_{n_1,i_1}\in \Lambda_0$, we would have by $k$-regularity of the point sequence $(t_n)$, applied to the partition $\mathcal T_{n_1}$,
\[
|\Lambda_0| \geq \frac{1}{\gamma}|L_0|,
\]
which contradicts \eqref{eq:C0-1} for our choice of $C_0$. This means we have $\Lambda_1=\Lambda_0$, and so, \eqref{eq:ind} is true for $j=0$. Next, assume that \eqref{eq:ind} is satisfied for $j-1$, where $j$ is in the range $1\leq j\leq k\ell-1$.
By $k$-regularity, applied to the partition $\mathcal T_{n_j}$, and employing repeatedly  \eqref{eq:ind} for $j-1$,
\begin{align*}
|\Lambda_j|+|L_j|=|\Lambda_j\cup L_j|
&\geq \frac{1}{\gamma}(\tau_{n_j,i_j+1}-\tau_{n_j,i_j-k+1})\\
&= \frac{1}{\gamma}(\tau_{n_{j-1},i_{j-1}}-\tau_{n_{j-1},i_{j-1}-k+1}) \\
&\geq \frac{C_{j-1}}{\gamma}|\Lambda_{j-1}|=\frac{C_{j-1}}{\gamma}|\Lambda_j|.
\end{align*}
This means, by the recursive definition of $C_j$,
\begin{equation}\label{eq:Cj}
|L_j|\geq C_j|\Lambda_j|,
\end{equation}
and in particular the first identity in \eqref{eq:ind} is true for $j$. If the second identity in \eqref{eq:ind} were not satisfied for $j$, i.e., $\tau_{n_{j+1},i_{j+1}}\in \Lambda_j$, we would have by $k$-regularity of the point sequence $(t_n)$, applied to the partition $\mathcal T_{n_{j+1}}$,
\[
|\Lambda_j|\geq \frac{1}{\gamma} |L_j|,
\]
which contradicts \eqref{eq:Cj} and our choice of $C_0$. This proves \eqref{eq:ind} for $j$ and thus, property \eqref{it:2} is true for all $j=0,\dots,k\ell$.

Moreover, choosing $C_0$ sufficiently large -- that is, such that $C_{kl} \geq 2(\gamma+1) k A$, \eqref{eq:Cj} implies
\begin{equation}\label{eq:RL-1}
|L_j| \geq 2(\gamma+1) k A |\Lambda_j|,
\end{equation}
which is a part of \eqref{it:6}.

The lower estimate in property \eqref{it:3} is proved by repeating the argument giving \eqref{eq:ind-0} and using \eqref{eq:Cj} instead of \eqref{eq:ind-0}. Moreover, the upper estimate also uses the same arguments as the proof of \eqref{eq:ind-1}, but now we have to use  \eqref{eq:Cj} as well.

Next, we look at property \eqref{it:3.5}. By $k$-regularity and \eqref{eq:Cj}, as $C_j >1$, we have
\[
|R_j|+|L_j|\leq \gamma (|L_j|+|\Lambda_j|)\leq 2\gamma |L_j|,
\]
which is exactly property \eqref{it:3.5}.

We prove property \eqref{it:4} by choosing a suitable subsequence of $(n_j)_{j=0}^{k\ell}$ and begin with assuming the contrary to \eqref{it:4} for $k$ consecutive indices, i.e., for an index $s$,
\begin{equation}\label{eq:RL}
|R_{s+r}|< \alpha|L_{s+r}|\leq \alpha|L_s|,\qquad r=1,\dots,k,
\end{equation}
where $\alpha:=\big(2(\gamma+1)k\big)^{-1}$.  We have $L_j=L_{j+1}\cup R_{j+1}$ for $0\leq j\leq k\ell-1$. Thus, on the one hand,
\begin{equation}\label{eq:contr1}
|L_s\setminus L_{s+k}|=\sum_{r=1}^k |R_{s+r}|\leq \alpha k |L_s|
\end{equation}
by \eqref{eq:RL}; on the other hand, by $k$-regularity of the partition $\mathcal{T}_{n_{s+k}}$,
\begin{equation}\label{eq:contr2}
\begin{aligned}
|L_s\setminus L_{s+k}|\geq \frac{1}{\gamma}|L_{s+k}|&=\frac{1}{\gamma}\Big(|L_s|-\sum_{r=1}^k |R_{s+r}|\Big) \geq \frac{1-\alpha k}{\gamma}|L_s|.
\end{aligned}
\end{equation}
Now, \eqref{eq:contr1} contradicts \eqref{eq:contr2} for our choice of $\alpha$.
We thus have proved that for $k$ consecutive indices $s+1,\dots,s+k$, there is at least one index $s+r$, $1\leq r\leq k$,
such that \eqref{it:4} is satisfied for $s+r$.
As a consequence we can extract a sequence of length $\ell$ from $(n_j)_{j=1}^{k\ell}$ satisfying \eqref{it:4}.
Without restriction, this subsequence is called $(n_j)_{j=0}^{\ell-1}$ again.

Property \eqref{it:6} for $R_j$
is now a simple consequence of \eqref{eq:RL-1}, property \eqref{it:4} and the choice of the subsequence $(n_j)_{j=0}^{\ell-1}$.
 Therefore, the proof of the lemma is completed.
\end{proof}


Now, we are ready to proceed with the proof of  Proposition \ref{prop:not_D->A}.

\begin{proof}[Proof of Proposition \ref{prop:not_D->A}. ]
Let $\ell$ be an arbitrary positive integer and $A\geq 2$ a number that will be chosen later. Then, Lemma \ref{lem:comb1} gives us a sequence $(n_j)_{j=0}^{\ell-1}$ such that all conditions in Lemma \ref{lem:comb1} are satisfied. We assume that $|\Lambda_0|>0$. Let $\tau:=\tau_{n_0,i_0-1}$, $x:=\tau-2|\Lambda_0|$ and $y:=\tau+2|\Lambda_0|$. Then we define the atom $\phi$ by
\[
\phi\equiv\frac{1}{4|\Lambda_0|}(\charfun_{[x,\tau]}-\charfun_{[\tau,y]})
\]
and let $j$ be an arbitrary integer in the range $0\leq j\leq \ell-1$. By partial integration, the expression $a_{n_j}(\phi)=\langle \phi,f_{n_j}\rangle$ can be written as
\begin{equation*}
\begin{aligned}
4|\Lambda_0| a_{n_j}(\phi)
&=\int_x^\tau f_{n_j}(t)\dif t-\int_{\tau}^{y} f_{n_j}(t)\dif t \\
&=\int_x^\tau f_{n_j}(t)-f_{n_j}(\tau)\dif t-\int_{\tau}^{y} f_{n_j}(t)-f_{n_j}(\tau)\dif t \\
&=\int_{x}^\tau (x-t)f_{n_j}'(t)\dif t- \int_\tau^y (y-t)f_{n_j}'(t)\dif t.
\end{aligned}
\end{equation*}
In order to estimate $|a_{n_j}(\phi)|$ from below, we estimate the absolute values of $I_1:=\int_{x}^\tau (x-t)f_{n_j}'(t)\dif t$ from below and of $I_2:=\int_\tau^y (y-t)f_{n_j}'(t)\dif t$ from above and  begin with $I_2$.

Consider the function $g_{n_j}$, which is connected to
 $f_{n_j}$ via $f_{n_j}=g_{n_j}/\|g_{n_j}\|_2$ and $\|g_{n_j}\|_2\sim_k |J_{n_j}|^{-1/2}$ (cf. \eqref{eq:defg} and Lemma \ref{lem:orthsplineJinterval}).
 In the notation of Lemma \ref{lem:comb1}, $g_{n_j}$ is obtained by inserting the point $t_{n_j} = \tau_{n_j,i_j}$ to ${\mathcal T}_{n_j -1}$, and
 it is a common endpoint of intervals $L_i$ and $R_i$.
 By construction of the characteristic interval $J_{n_j}$,
Lemma \ref{lem:comb1} properties \eqref{it:3.5} -- \eqref{it:6}, and the $k$-regularity of the point sequence $(t_n)$,
we have
\begin{equation}\label{eq:equiv}
|J_{n_j}|\sim_{k,\gamma} |L_j|\sim_{k,\gamma} |R_j|.
\end{equation}

By Lemma \ref{lem:comb1}, property \eqref{it:6}, we have $[\tau,y]\subset L_j$, and therefore on $[\tau,y]$,
the derivative of the function $g_{n_j}$ has the representation (cf.
\eqref{eq:splinederivative})
\[
g_{n_j}'(u)=(k-1)\sum_{i=i_j-k+1}^{i_j-1}\xi_i N_{n_j,i}^{(k-1)}(u),\qquad u\in[\tau,y],
\]
where $\xi_i=(w_i-w_{i-1})/|D_{n_j,i}^{(k-1)}|$ and the coefficients $w_i$ are given by $\eqref{eq:defwj}$ associated to the partition $\mathcal T_{n_j}$.
For $i=i_j-k+1,\dots i_j-1$ we have $L_j \subset D_{n_j,i}^{(k-1)}$, which in combination with the $k$-regularity of the point sequence $(t_n)$ and
 Lemma \ref{lem:comb1} property \eqref{it:6} implies
\begin{equation}\label{eq:equiv-1}
|J_{n_j}| \sim_k |L_j|\sim_{k,\gamma}  |D_{n_j,i}^{(k-1)}|,\qquad i=i_j-k+1,\dots i_j-1.
\end{equation}
Moreover, by Lemma \ref{lem:lporthspline},
\[
|w_i|\lesssim_k \frac{1}{|J_{n_j}|},\qquad 1\leq i\leq n_j+k-1.
\]
Therefore
$$
|f'_{n_j}(t) | \sim_{k} |J_{n_j}|^{1/2} |g'_{n_j}(t)| \lesssim_{k,\gamma} |L_j|^{-3/2} \quad {\rm for } \quad t \in [\tau,y].
$$
Consequently, putting the above facts together,
\begin{equation}\label{eq:II}
|I_2|\lesssim_{k,\gamma} |\Lambda_0|^2\cdot |L_j|^{-3/2}.
\end{equation}

We continue with the estimate of $I_1$. By properties \eqref{it:3} and \eqref{it:6}  of Lemma \ref{lem:comb1} (with $A \geq 2 \gamma$),
we have $[x,\tau]\subset [\tau_{n_j,i_j-k-1},\tau_{n_j,i_j-1}]$ and, therefore, on $[x,\tau]$, $g_{n_j}'$ has the representation (cf.
\eqref{eq:splinederivative})
\[
g_{n_j}'(u)=(k-1)\sum_{i=i_j-2k+1}^{i_j-2}\xi_i N_{n_j,i}^{(k-1)}(u),\qquad u\in[x,\tau].
\]
We split the integral $I_1$ as a sum $I_1 = I_{1,1} + I_{1,2}$ corresponding to the indices $i \neq i_{j} -k$ and $i = i_j -k$ in the above representation
of $g_{n_j}$ on $[x,\tau]$.

Note that $[\tau_{n_j,i_j-k-1},\tau_{n_j,i_j-k}] \subset D^{(k-1)}_{n_j, i}$ for $i_j-2k+1 \leq i< i_j - k$
and $L_j \subset D^{(k-1)}_{n_j, i}$ for $i_j -k < i \leq i_j-2$. Therefore, by properties \eqref{it:3} and \eqref{it:6} of Lemma \ref{lem:comb1}  and the $k$-regularity of the sequence of knots
we have
$$
| D^{(k-1)}_{n_j, i}| \sim_{k,\gamma} |L_j| \quad {\rm for} \quad i_j - 2k+ 1 \leq i \leq i_j -2, \quad i \neq i_j -k.
$$
{So}, by arguments analogous to the proof of \eqref{eq:II} we get
\begin{equation}\label{eq:Iupper}
| I_{1,1} | \sim_k |J_{n_j}|^{1/2} \Big|\int_x^\tau (t-x)\sum_{\substack{i=i_j-2k+1\\i\neq i_j-k}}^{i_j-2}\xi_i N_{n_j,i}^{(k-1)}(t)\dif t\Big|\lesssim_{k,\gamma} |\Lambda_0|^2\cdot |L_j|^{-3/2}.
\end{equation}
Moreover, for $i=i_j-k$, we have $D^{(k-1)}_{n_j,i_j-k} = \Lambda_0$, so we get
\begin{equation}\label{eq:Icont}
\begin{aligned}
| I_{1,2} | & \sim_k |J_{n_j}|^{1/2}\Big|\int_x^\tau (t-x)\xi_{i_j-k} N_{n_j,i_j-k}^{(k-1)}(t)\dif t\Big|
\\ & \geq |\xi_{i_j-k}| |J_{n_j}|^{1/2} |\Lambda_0| \int_x^\tau N_{n_j,i_j-k}^{(k-1)}(t)\dif t \\
&=|\xi_{i_j-k}||\Lambda_0| |J_{n_j}|^{1/2} \frac{|D_{n_j,i_j-k}^{(k-1)}|}{k-1}
 = |\xi_{i_j-k}| |J_{n_j}|^{1/2} \frac{|\Lambda_0|^2}{k-1},
\end{aligned}
\end{equation}
due to the fact that $t-x\geq |\Lambda_0|$ for $t\in\supp N_{n_j,i_j-k}^{(k-1)}$.
Since the sequence $w_j$ is checkerboard, cf. \eqref{eq:betragreinziehen},
\[
|\xi_{i_j-k}|=\frac{|w_{i_j-k}|+|w_{i_j-k-1}|}{|D_{n_j,i_j-k}^{(k-1)}|} \geq \frac{|w_{i_j-k}|}{|D_{n_j,i_j-k}^{(k-1)}|}.
\]
By definition of $w_{i_j-k}$,
\[
|w_{i_j-k}|\geq |\alpha_{i_j-k}||b_{i_j-k,i_j-k}|,
\]
where $\alpha_{i_j-k}$ is the factor from formula \eqref{eq:alpha2} and $b_{i_j-k,i_j-k}$ is an entry of the inverse of the B-spline Gram matrix, both corresponding to the partition $\mathcal T_{n_j}$. Formulas \eqref{eq:alpha2} and \eqref{eq:equiv} imply that $\alpha_{i_j-k}$ is bounded from below by a positive constant that is only depending on $k$ and $\gamma$.\footnote{Formula \eqref{eq:alpha2} is applied with $\mathcal T_n = \mathcal T_{n_j}$ and corresponding to
$\tau_{i_0} = \tau_{n_j,i_j}$. Then $[\tau_{i_0-1}, \tau_{i_0}] = L_j$ and $[\tau_{i_0}, \tau_{i_0+1}] = R_j$. Because of $k$-regularity and $|\Lambda_0 \cup L_j| \sim_{k,\gamma} |L_j|$, each denominator in \eqref{eq:alpha2} is $\sim_{k,\gamma} |L_j|$. Each nominator in \eqref{eq:alpha2}
 is bigger than either $L_j$ or $R_j$, so by \eqref{eq:equiv} and $k$-regularity it is $\sim_{k,\gamma} |L_j|$ as well.}
  Moreover, $|b_{i_j-k,i_j-k}|\geq \|N_{n_j,i_j-k}^{(k)}\|_2^{-2}\gtrsim_k |D_{n_j,i_j-k}^{(k)}|^{-1}$, cf. \eqref{estbi:lower}.
Note that $D^{(k)}_{n_j, i_j-k} = \Lambda_0 \cup L_j$, so $| D^{(k)}_{n_j, i_j-k} | \sim_{k,\gamma } |L_j|$.
Thus, $|\xi_{i_j-k}| \gtrsim_{k,\gamma} |\Lambda_0|^{-1} |L_j|^{-1}$.
 Inserting the above calculations in \eqref{eq:Icont}, we find
\begin{equation}\label{eq:Icont2}
\begin{aligned}
| I_{1,2} |
&\gtrsim_{k,\gamma} 
|J_{n_j}|^{1/2} {|\Lambda_0| \over |L_j|} \sim_{k,\gamma} | \Lambda_0|  |L_j|^{-1/2}.
\end{aligned}
\end{equation}
We now impose conditions on the constant $A \geq 2 \gamma$ from the beginning of the proof and property \eqref{it:6} in Lemma \ref{lem:comb1}.
It follows by \eqref{eq:Icont2}, \eqref{eq:Iupper} and \eqref{eq:II} that there are $C_{k,\gamma}>0$ and $c_{k,\gamma}>0$, depending only on
$k$ and $\gamma$ such that
\begin{eqnarray*}
4 |\Lambda_0| |a_{n_j}(\phi)| & \geq & |I_{1,2}| - |I_{1,1}| - |I_2|
 \geq   C_{k,\gamma} |\Lambda_0| |L_j|^{-1/2} - c_{k,\gamma} |\Lambda_0|^2 |L_j|^{-3/2}
\\ & = &
|\Lambda_0| |L_j|^{-1/2} (C_{k,\gamma} - c_{k,\gamma} |\Lambda_0| |L_j|^{-1}).
\end{eqnarray*}
By property \eqref{it:6} in Lemma \ref{lem:comb1} we have $|\Lambda_0 | |L_j|^{-1} \leq 1 / A$.
 Choosing $A$ sufficiently large to guarantee
 $$
 C_{k,\gamma} - \frac{c_{k,\gamma}}{A} \geq \frac{C_{k,\gamma}}{2},
 $$
we get a constant $m_{k,\gamma}$, depending only on $k$ and $\gamma$ such that
\begin{equation}\label{eq:anj}
m_{k,\gamma} |L_j|^{-1/2}\leq  |a_{n_j}(\phi)|,\qquad j=0,\dots,\ell-1.
\end{equation}

Next, we estimate $\int_{R_j}|g_{n_j}(t)|\dif t$ from below. First, employ Proposition \ref{prop:lpstab}, property \eqref{it:6} of Lemma \ref{lem:comb1}  and the $k$-regularity of the point sequence $(t_n)$ to get
\[
\int_{R_j}|g_{n_j}(t)|\dif t\gtrsim_{k,\gamma} |R_j||w_{i_j}|,
\]
where $w_{i_j}$ corresponds to the partition $\mathcal T_{n_j}$. By definition of $w_{i_j}$,
\[
\int_{R_j}|g_{n_j}(t)|\dif t\gtrsim_{k,\gamma} |R_j||\alpha_{i_j}||b_{i_j,i_j}|.
\]
By arguments similar as above, $|\alpha_{i_j}|$ is bounded from below by a constant only depending on $k$ and $\gamma$, and $|b_{i_j,i_j}|\gtrsim_k |D_{n_j,i_j}^{(k)}|^{-1}$. Since by $k$-regularity, $|R_j|\sim_{k,\gamma} |D_{n_j,i_j}^{(k)}|$, we finally get
\[
\int_{R_j}|g_{n_j}(t)|\dif t\gtrsim_{k,\gamma} 1,
\]
which means for $f_{n_j}$ that
\[
\int_{R_j}|f_{n_j}(t)|\dif t\gtrsim_{k,\gamma} |J_{n_j}|^{1/2}\gtrsim_{k,\gamma} |L_j|^{1/2}.
\]
Combining this last estimate with \eqref{eq:anj} and \eqref{it:1} of Lemma \ref{lem:comb1},
\[
\int_0^1\sup_n |a_n(\phi)f_n(t)|\dif t\geq \sum_{j=1}^\ell \int_{R_j}|a_{n_j}(\phi)f_{n_j}(t)|\dif t\gtrsim_{k,\gamma} \ell.
\]
This construction applies to every positive integer $\ell$, proving the assertion of the proposition for $|\Lambda_0|>0$.

The case $|\Lambda_0|=0$ proceeds similarly, with the difference that the atom $\phi$ is defined as centered at the point $\tau_{n_0,i_0-1}$ and the length of the support is sufficiently small, depending on $\ell$ and  $|L_0|$.
\end{proof}


With Proposition \ref{prop:not_D->A} and the results of Section \ref{four.cond}, the proof of Theorem  \ref{thm:uncond} follows now the same line of arguments as the proof of Theorem 2.2 in \cite{GevorkyanKamont2005}, but we present it here for the sake of the completeness.

 \begin{proof}[Proof of Theorem \ref{thm:uncond}]
We start by proving the unconditional basis property of $(f_n)=(f_n^{(k)})$ assuming the $(k-1)$-regularity of $(t_n)$. If $(t_n)$ is $(k-1)$-regular, it is not difficult to check that it is also $k$-regular. As a consequence, Theorem \ref{thm:basis} implies that $(f_n$) is a basis in $H^1$. Let $f\in H^1$ with $f=\sum a_n f_n$ and $\varepsilon\in \{-1,1\}^{\mathbb{Z}}$. We need to prove convergence of the series $\sum \varepsilon_na_nf_n$ in $H^1$.
Let $m_1\leq m_2$, then
\begin{align*}
\big\|\sum_{n=m_1}^{m_2}\varepsilon_na_nf_n\big\|_{H^1}
&\lesssim_{k,\gamma} \big\|S\big(\sum_{n=m_1}^{m_2}\varepsilon_na_nf_n\big)\big\|_1 \lesssim_k \big\|P\big(\sum_{n=m_1}^{m_2}\varepsilon_na_nf_n\big)\big\|_1 \\
&= \big\|P\big(\sum_{n=m_1}^{m_2}a_nf_n\big)\big\|_1 \lesssim_{k,\gamma} \big\|\sum_{n=m_1}^{m_2} a_nf_n\big\|_{H^1}
\end{align*}
where we used Proposition \ref{prop:C->D}, Proposition \ref{prop:A->B_A->C} and Proposition \ref{prop:D->A}, respectively (cf. also the picture on page \pageref{fig:equivalences}). So, since $\sum a_n f_n$ converges in $H^1$, so does $f_\varepsilon :=\sum \varepsilon_n a_nf_n$ and the same calculation as above shows
\[
\|f_\varepsilon\|_{H^1}\lesssim_{k,\gamma} \|f\|_{H^1}.
\]
This implies that $(f_n)$ is an unconditional basis in $H^1$.

We now prove the converse, i.e., that $(f_n)$ being an unconditional basis in $H^1$ implies the $(k-1)$-regularity condition. First, if $(t_n)$ does not satisfy the $k$-regularity condition, $(f_n)$ is not a basis in $H^1$ by Theorem \ref{thm:basis}. Thus, it remains to consider the case when $(t_n)$ satisfies the $k$-regularity condition, but not the $(k-1)$-regularity condition. By Theorem \ref{thm:basis} again, $(f_n)$ is then a basis in $H^1$. Suppose that $(f_n)$ is an unconditional basis in $H^1$. Then, for $f=\sum a_n f_n$ and $\varepsilon\in\{-1,1\}^\mathbb{Z}$, the function $f_\varepsilon:=\sum \varepsilon_na_nf_n$ is also in $H^1$. Since $\|\cdot\|_1\leq \|\cdot\|_{H^1}$, the series $\sum a_nf_n$ also converges unconditionally in $L^1$, and thus Proposition \ref{prop:B->A} (i.e., Khinchin's inequality) implies
\[
\|Pf\|_1\lesssim \sup_\varepsilon \|f_\varepsilon\|_1\leq \sup_\varepsilon\|f_\varepsilon\|_{H^1}\lesssim \|f\|_{H^1},
\]
which is impossible due to Proposition \ref{prop:not_D->A}, even for atoms. This concludes the proof of Theorem \ref{thm:uncond}.
\end{proof}
{As an immediate consequence of Theorem \ref{thm:uncond}, a fifth equivalent condition to (A)-(D) is the unconditional convergence of $\sum_n a_n f_n$ in $H^1$:}
\begin{cor}\label{corr:kreg}
Let $(t_n)$ be a $k$-admissible and $(k-1)$-regular sequence of points, with $(f_n)$ -- the corresponding orthonormal spline system of order $k$.
Let $(a_n)$ be a sequence of coefficients. Then conditions (A) -- (D) from Section \ref{four.cond} are equivalent.

Moreover, they are equivalent to the following

\smallskip

{\rm (E)} The series $\sum_n a_n f_n$ converges unconditionally in $H^1$.

\smallskip

In addition, for $f \in H^1$, $f = \sum_n a_n f_n$ we have
$$
\| f \|_{H^1} \sim \| Sf \|_1 \sim \| Pf \|_1 \sim \sup_{\varepsilon \in \{-1,1\}^{\mathbb Z}} \| \sum_n \varepsilon_n a_n f_n \|_1,
$$
with the implied constants depending only on $k$ and  the parameter of $(k-1)$-regularity of the sequence $(t_n)$.
\end{cor}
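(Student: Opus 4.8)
The plan is to obtain the corollary by pure bookkeeping of the implications established in Section~\ref{four.cond} together with Theorem~\ref{thm:uncond}; no new estimate is needed. First I would record that, as noted in the proof of Theorem~\ref{thm:uncond}, $(k-1)$-regularity of $(t_n)$ forces $k$-regularity, so that all four Propositions~\ref{prop:B->A}, \ref{prop:A->B_A->C}, \ref{prop:C->D}, \ref{prop:D->A} are applicable. Combining them gives
\[
(B)\ \Longleftrightarrow\ (A)\ \Longrightarrow\ (C)\ \Longrightarrow\ (D)\ \Longrightarrow\ (A),
\]
where $(B)\Rightarrow(A)$ is Proposition~\ref{prop:B->A}, $(A)\Rightarrow(B)$ and $(A)\Rightarrow(C)$ are Proposition~\ref{prop:A->B_A->C}, $(C)\Rightarrow(D)$ is Proposition~\ref{prop:C->D} and $(D)\Rightarrow(A)$ is Proposition~\ref{prop:D->A}. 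Hence $(A)$--$(D)$ are equivalent.

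Next I would adjoin condition $(E)$. For $(E)\Rightarrow(D)$: if $\sum_n a_nf_n$ converges unconditionally in $H^1$, its sum $f$ lies in $H^1$, and the basis coefficients of $f$ with respect to $(f_n)$ — which exist because $(f_n)$ is a basis in $H^1$ by Theorem~\ref{thm:basis} — must equal $a_n$. Since $H^1$ embeds continuously into $L^1$, $(f_n)$ is also a basis in $L^1$ with coefficient functionals $g\mapsto\langle g,f_n\rangle$ (as used already in the proof of Proposition~\ref{prop:C->D}; these extend by density from $L^2$ since each $f_n\in L^\infty$), and $H^1$-convergence implies $L^1$-convergence, the two systems of coefficient functionals coincide, so $a_n=\langle f,f_n\rangle$, which is $(D)$. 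For $(D)\Rightarrow(E)$: Theorem~\ref{thm:uncond} asserts that under $(k-1)$-regularity $(f_n)$ is an \emph{unconditional} basis of $H^1$; applying this to the function $f\in H^1$ supplied by $(D)$, its expansion $f=\sum_n a_nf_n$ converges unconditionally in $H^1$, i.e. $(E)$ holds. Thus $(E)$ is equivalent to $(D)$, hence to $(A)$--$(D)$.

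Finally, for the norm equivalences I would collect the quantitative parts of the same propositions, applied to the coefficient sequence $a_n=\langle f,f_n\rangle$ of a given $f\in H^1$; note that $Pf\in L^1$ by Proposition~\ref{prop:D->A} (so that Proposition~\ref{prop:A->B_A->C} applies) and that $\sum_n a_nf_n$ converges unconditionally in $L^1$ by the above. Propositions~\ref{prop:C->D}, \ref{prop:A->B_A->C} and~\ref{prop:D->A} then give
\[
\|f\|_{H^1}\lesssim_{k,\gamma}\|Sf\|_1\lesssim_k\|Pf\|_1\lesssim_{k,\gamma}\|f\|_{H^1},
\]
whence $\|f\|_{H^1}\sim\|Sf\|_1\sim\|Pf\|_1$, while Propositions~\ref{prop:B->A} and~\ref{prop:A->B_A->C} give
\[
\|Pf\|_1\lesssim\sup_{\varepsilon\in\{-1,1\}^{\mathbb{Z}}}\Big\|\sum_n\varepsilon_na_nf_n\Big\|_1\lesssim_k\|Pf\|_1,
\]
adjoining the fourth quantity, with all implied constants depending only on $k$ and the $(k-1)$-regularity parameter $\gamma$. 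I do not anticipate any real obstacle here: the argument is just a traversal of the diagram on page~\pageref{fig:equivalences} combined with Theorem~\ref{thm:uncond}, the only point requiring a word of care being the identification of the $H^1$-basis coefficient functionals of $(f_n)$ with the inner products $\langle\cdot,f_n\rangle$, addressed above.
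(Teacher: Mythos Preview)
Your proposal is correct and matches what the paper intends: the paper states the corollary as ``an immediate consequence of Theorem~\ref{thm:uncond}'' without giving a separate proof, and your argument is precisely the routine traversal of the diagram in Section~\ref{four.cond} together with Theorem~\ref{thm:uncond} that this phrase is pointing to. The only detail you supply that the paper leaves implicit is the identification of the $H^1$-basis coefficients with $\langle\cdot,f_n\rangle$, which you handle correctly.
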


\subsection*{Acknowledgments}
A.~Kamont is supported by Polish MNiSW grant N N201 607840 {and}
M.~Passenbrunner is supported by the Austrian Science Fund, FWF project P 23987-N18.
Part of this work was done while A.~Kamont was visiting the Department of
Analysis, J. Kepler University Linz in November 2013. This
stay in Linz was supported by FWF project P 23987-N18.
G.~Gevorkyan and K.~Keryan were supported by SCS RA grant 13-1A006.
\nocite{GevorkyanKamont1998}\nocite{GevorkyanSahakian2000}\nocite{GevKam2004}\nocite{GevorkyanKamont2005}\nocite{GevorkyanKamont2008}
\bibliographystyle{plain}
\bibliography{uncondH1}
\end{document}